\documentclass[11pt]{article}

\usepackage{amssymb,amsmath,amsfonts}
\usepackage{graphicx,color,enumitem}
\usepackage{amsthm} 
\usepackage{bm}
\usepackage[round]{natbib}
\usepackage{geometry}

\usepackage[colorinlistoftodos, textwidth=4cm, shadow]{todonotes}
\RequirePackage[colorlinks,citecolor=blue,urlcolor=blue]{hyperref}

\usepackage{bbm}
\usepackage{capt-of}


\newcommand{\E}{{\mathbb E}}
\newcommand{\F}{{\mathbb F}}

\renewcommand{\P}{{\mathbb P}}
\newcommand{\Q}{{\mathbb Q}}
\newcommand{\C}{{\mathbb C}}
\newcommand{\R}{{\mathbb R}}

\newcommand{\N}{{\mathbb N}}

\newcommand{\Gcal}{{\mathcal G}}

\newcommand{\Mid}{{\ \Big|\ }}

\newcommand{\Fc}{{\mathcal F}}

\newtheorem{theorem}{Theorem}



\newtheorem{lemma}[theorem]{Lemma}

\newtheorem{remark}[theorem]{Remark}

\theoremstyle{definition}
\newtheorem{example}[theorem]{Example}

\numberwithin{equation}{section}
\numberwithin{theorem}{section}

\definecolor{darkgreen}{rgb}{0,0.7,0}

\newcommand{\iii}{{\vert\kern-0.25ex\vert\kern-0.25ex\vert}}

\usepackage{mathtools}
\mathtoolsset{showonlyrefs}

\newcommand{\RR}{\mathbb{R}}

\newcommand{\PP}{\mathbb{P}}
\newcommand{\EE}{\mathbb{E}}

\newcommand{\ve}{\varepsilon}

\begin{document}
	
	\title{Weak existence and uniqueness  for  affine stochastic Volterra equations with $L^1$-kernels}
	
	\author{Eduardo Abi Jaber\thanks{Universit\'e Paris 1 Panth\'eon-Sorbonne, Centre d'Economie de la Sorbonne, 106, Boulevard de l'H\^opital, 75013 Paris, eduardo.abi-jaber@univ-paris1.fr. I would like to thank Mathieu Rosenbaum for presenting me the uniqueness problem, and Ryan McCrickerd for interesting discussions. I am also grateful for the editor and two anonymous referees whose insightful comments and suggestions have significantly improved the manuscript.}}

	\maketitle
	
	\begin{abstract}
	We provide existence, uniqueness and stability results for  affine stochastic Volterra equations with $L^1$-kernels and  jumps. Such equations arise as scaling limits of branching processes in population genetics and	self-exciting Hawkes processes in mathematical finance.   The strategy we adopt for the existence part is  based on approximations using stochastic Volterra equations with $L^2$-kernels combined with a general stability result.  Most importantly,  we establish weak uniqueness using a  duality argument on the  Fourier--Laplace transform via a deterministic Riccati--Volterra integral equation. We illustrate the applicability of our results on Hawkes processes and a class of hyper-rough Volterra Heston models with a Hurst index $H \in (-1/2,1/2]$. 
 	\\[2ex] 
		\noindent{\textbf {Keywords:}} Stochastic Volterra equations, Affine Volterra processes, Riccati--Volterra equations, superprocesses, Hawkes processes, rough volatility.\\[2ex] 
		\noindent{\textbf {MSC2010 Classification:}}  	60H20, 	60G22, 	45D05
	\end{abstract}


\section{Introduction}
We establish weak existence, uniqueness and stability results for stochastic Volterra equation with locally $L^1$--kernels $K$ in the form
\begin{align}\label{eq:generalsve}
X_t &=  G_0(t) + \int_0^t K(t-s)  Z_s ds, \quad t \geq 0,
\end{align}
for a given function $G_0:\R_+ \to \R$ where $Z$ is a real-valued semimartingale, starting from zero,  with affine characteristics in $X$
\begin{align}\label{eq:charZ}
    (bX, c X,  {\nu(d\zeta)X}), 
\end{align} 
with $b \in \RR$, $c\geq 0$, $\nu$ a nonnegative measure on $\R_+$ such that $\int_{\R_+} \zeta^2 \nu(d\zeta)<\infty$, with respect to the `truncation function' $\chi(\zeta)=\zeta$.  For $L^2$--kernels this formulation was recently introduced in \citet{AJCPL:19}, where $Z$ is a semimartingale but whose characteristics are absolutely continuous with respect to the Lebesgue measure.  In the $L^1$ setting, $X$ may fail to be absolutely continuous with respect to the Lebesgue measure, as will be explained in the sequel. For this reason, our study falls beyond the scope of  \citet{AJCPL:19}.

Our motivation for studying such convolution equations is twofold. Stochastic Volterra equations with kernels that are locally in $L^1$ but not in $L^2$ with  $c>0$ $\nu\equiv 0$ arise as scaling limits of branching processes in population genetics and	self--exciting Hawkes processes in mathematical finance. As we will highlight  in the sequel, the $L^1$-framework allows  for instance to make sense of fractional dynamics, inspired by the fractional Brownian motion, for negative Hurst indices $H\in(-1/2,0)$.	

$\bullet$ \textit{From branching processes to stochastic Volterra equations.}  The link was formulated for the first time in  \citet{mytnik2015uniqueness} to motivate the study of stochastic Volterra equations with $L^2$--kernels. In the sequel we re-formulate the aforementioned introductory exposition linking super--processes with stochastic Volterra equations with $L^1$--kernels.  Consider a system  of $n$ \textit{reactant} particles  in one dimension moving independently according to a standard Brownian motion and branching only in the presence of a  \textit{catalyst}. The  \textit{catalyst} region  at a certain time $t$ is  defined as the support of some deterministic measure $\rho_t(dx)$. Whenever a particle enters in the \textit{catalyst} region and after spending a random time in the vicinity of the \textit{catalyst},  it  will either die or  split into two new particles, with equal probabilities. The measure $\rho_t(dx)$ determines the local branching rate in space and time depending on the location and the concentration of the \textit{catalyst}. Two typical examples are $\rho_t(dx) \equiv \bar \rho dx$  where the  branching occurs in the entire space with constant rate $\bar \rho$ and  $\rho_t(dx)=\delta_0(dx) $  for a  branching occurring  with infinite rate only  when the particle hits a highly concentrated single point \textit{catalyst} located at $0$. In case of branching, the two offspring particles evolve independently with the same spatial movement and branching mechanism as their parent. 

One can view the {reactant} as a rescaled measure-valued process $(\bar Y^n_t(dx))_{t \geq 0}$ defined by
$$ \bar Y^n_t(B) =  \frac{\mbox{number of particles in $B$ at time $t$}}{n}, \quad  \mbox{for every Borel set } B.$$
Sending the number of particles to infinity,  one can establish the convergence towards a measure-valued macroscopic \textit{reactant} $\bar Y$, coined \textit{catalytic super-Brownian motion}, which solves an infinite dimensional martingale problem, see \citet{dawson1991critical, etheridge2000introduction, perkins2002part} and the references therein. Moreover,  in the presence of a suitable deterministic \textit{catalyst}  $\rho=(\rho_t(dx))_{t \geq 0}$ having no atoms, the measure-valued process $\bar Y$  admits a density  $\bar Y_t (dx) = Y_t (x) dx $ solution to the following stochastic partial differential equation in mild form
\begin{align}\label{E:introspde2}
Y_t (x) =  \int_{\RR} p_t(x-y) Y_0(y)dy +   \int_{[0,t]\times\RR }  p_{t-s}(x-y)  \sqrt{Y_s(y)} W^{\rho} (ds, dy). 
\end{align} 
where  $Y_0$ is an input curve,  $p_t(x)= (2\pi t)^{-1/2}\exp(-x^2/(2t))$ is  the heat kernel
and ${W}^{\rho}$ is a space-time noise with covariance structure determined by  $\rho$, we refer to \cite{zahle2005space} for more details. The previous equation is only valid if $\rho$ has no atoms. One could still heuristically set $\rho_t(dx) = \delta_0(dx)$ in \eqref{E:introspde2} for the extreme case of a single point \textit{catalyst} at $0$, which would formally correspond to the \textit{catalytic super-Brownian motion} of \citet{dawson1994super}. Then, the space-time noise reduces to a standard Brownian motion $W$ so that  evaluation at $x=0$ yields 
\begin{align}\label{E:introspde3}
Y_t (0) =  g_0(t) +  \frac{1}{\sqrt{2 \pi}} \int_0^t   (t-s)^{-1/2}   dZ_s,
\end{align} 
where $dZ_t= \sqrt{Y_t(0)}dW_t$  and $g_0(t) =  \int_{\RR} p_t(y) Y_0(y)dy$.
 The link with stochastic Volterra equations of the form \eqref{eq:generalsve} is established by considering  the local occupation time at the catalyst point $0$  defined by
\begin{align}\label{eq:localtime}
X_t= \lim_{\ve \to 0} \int_0^t \int_{\RR} p^{\ve}(y) \bar Y_s(dy) \,ds,\quad  t \geq 0,
\end{align}
where $p^{\ve}$  is a suitable  smoothing kernel of the dirac point mass at $0$. Integrating both sides of equation \eqref{E:introspde3} with respect to time and formally interchanging the integrals lead to
\begin{align}
X_t &= \int_0^t Y_s(0)ds \label{eq:localtimevolterra0} \\
&= \int_0^t g_0(s)ds +  \frac{1}{\sqrt{2 \pi}} \int_0^t {(t-s)^{-1/2}} Z_s ds,\label{eq:localtimevolterra}
\end{align}
such that $Z$ is a continuous semimartingale with  affine characteristics $(0,X,0)$. Consequently, $X$ solves \eqref{eq:generalsve} for the kernel
\begin{align}\label{eq:kernel0}
 K_0 (t)= \frac{t^{-1/2}}{\sqrt{2 \pi}}, \quad t >0,
 \end{align}
 which is locally in $L^1$ but not in $L^2$. Needless to say, one is not allowed to plug the Dirac measure in \eqref{E:introspde2}. Indeed, in the presence of a single point \textit{catalyst},  the \textit{catalytic super-Brownian motion} does not admit a density at the \textit{catalyst} position as shown by   \cite{dawson1994super} and the identities  \eqref{E:introspde2} and \eqref{eq:localtimevolterra0} break down. The local occupation time $X$ is even singular with respect to the Lebesgue measure, see \citet{dawson1995singularity,fleischmann1995new}. Still, one can rigorously prove that the local occupation time $X$ defined by  \eqref{eq:localtime} solves \eqref{eq:localtimevolterra} by appealing to the martingale problem of the measure--valued process $\bar Y$, we refer to Appendix~\ref{A:localtime} for a  rigorous derivation.
 
 $\bullet$ \textit{From Hawkes processes to stochastic Volterra equations.} More recently,  
 for particular choices of $G_0$ and  kernels, solutions to \eqref{eq:generalsve} were obtained  in \citet{Jusselin_2018}  as scaling limits of Hawkes processes  $(N^n)_{n \geq 1}$ with  respective intensities 
  \begin{align} 
 \lambda^n_t = g_0^n(t) + \int_0^t K^n(t-s)dN^n_s, \quad t  \geq 0, 
 \end{align}
 for some suitable  function $g_0^n$ and kernel $K^n$. The rescaled sequence of integrated accelerated intensities $X^n= {\frac 1 n }\int_0^{\cdot} \lambda^n_{ns} ds$ is shown to converge to a continuous process $X$ satisfying \eqref{eq:generalsve}  for the fractional kernel\footnote{{To be more precise, in \citet{Jusselin_2018}, the limiting kernel is not the fractional kernel but the so-called Mittag-Leffler function  and the process $Z$ there has characteristics $(0,X,0)$.  This can be equivalently re-written with the fractional kernel modulo the addition of a suitable drift $bX$, we refer to Example~\ref{E:Hawkesapprox} below for more details.}} 
  \begin{align}\label{eq:kernelfrac}
 K_H(t)=\frac {t^{H-1/2}}{\Gamma(H+1/2)}, \quad t >0,
 \end{align}
 with $H \in (1/2,1/2]$. We note that for $H=0$ the fractional kernel reduces to \eqref{eq:kernel0}, up to a normalizing constant. In other words, when $H=0$, the scaling limit of the integrated intensities of Hawkes processes can be seen  as the local occupation time of the \textit{catalytic super-Brownian motion} of \citet{dawson1994super}, provided uniqueness holds. Similarly, when $H\leq 0$, $K_H$ lies locally in $L^1$ but not in $L^2$, and one can also show that in this case $X$ is not absolutely continuous with respect to the Lebesgue measure, see  \citet[Proposition 4.6]{Jusselin_2018}. For $H>0$, the kernel \eqref{eq:kernelfrac} is nothing else but the kernel that appears in the celebrated  \citet{mandelbrot1968fractional} decomposition of fractional Brownian motion $W^H$:
 \begin{align*}
 W^H_t = \int_0^t K_H(t-s)dW_s + \int_{-\infty}^0 (K_H(t-s)-K_H(-s))dW_s
 \end{align*}
 where $W$ is a two-sided standard Brownian motion and $H>0$ is required to make sense of the stochastic convolution with respect to Brownian motion in the $L^2$-theory of Kiyosi It\^o. In this sense, the $L^1$-framework allows for a generalization of fractional type dynamics with negative Hurst indices. 
 
 In both of the motivating cases, one can compute the Laplace transform of $X$, 
  modulo a deterministic Riccati--Volterra equation of the form 
 \begin{align*}
 \psi(t) = \int_0^t K_H(t-s)\left( \frac 1 2 \psi^2(s) -1 \right)ds,
 \end{align*}
 either by using 
 the dual process of the \textit{catalytic super-Brownian motion}, see  \citet[Equations (4.2.1)-(4.2.2)]{dawson1994super}, or by exploiting the affine structure of the   approximating  Hawkes processes, see \citet[Theorem 3.4]{Jusselin_2018}. Both constructions provide solutions to \eqref{eq:generalsve}, but do not yield uniqueness. Establishing weak uniqueness is one of the main motivation of this work.

In the present paper, we provide a generic treatment of  the limiting macroscopic equation \eqref{eq:generalsve} and we allow for (infinite activity) jumps in $Z$. For instance, Hawkes processes can be recovered by setting $c=0$ and $\nu=\delta_1$. The strategy we adopt is  based on approximations using stochastic Volterra equations with $L^2$ kernels, whose existence and uniqueness theory is now well--established, see \citet{AJCPL:19,Abi_Jaber_2017} and the references therein. By doing so, we avoid the infinite-dimensional analysis used for super-processes, we also circumvent the need to study scaling limits of  Hawkes processes, allowing for more generality in the choice of  kernels $K$ and input functions $G_0$. Along the way, we derive a general stability result that encompasses the motivating example with Hawkes processes. Most importantly,  we establish weak uniqueness using a  duality argument on the  Fourier--Laplace transform of $X$ via a deterministic Riccati--Volterra integral equation. In particular, this expression extends the one obtained for affine Volterra processes with $L^2$-kernels in \citet{Abi_Jaber_2017,cuchiero2020generalized}. We illustrate the applicability of our results on a class of hyper-rough Volterra Heston models with a Hurst index $H \in (-1/2,1/2]$ and  jumps  complementing the results of \citet{Abi_Jaber_2017,El_Euch_2019,Jusselin_2018}. Such models have recently known a growing interest  to account for rough volatility,   a universal phenomena observed in financial markets, see \citet{Gatheral_2018}. \vspace{0.4em}

\textbf{Notations} $\Delta_h$ stands for the shift operator, i.e.~$\Delta_h g=g(h+\cdot)$ and $dg$ is the distributional derivative of a right--continuous function $g$ with locally bounded variation, i.e.~$dg((s,t]) = g(t) -g(s)$. 
For a suitable Borel function $f$ the quantity $\int_{0}^{\cdot} f(s) dg(s)$  will stand for the Lebesgue--Stieltjes integral, whenever the integral exists. Similarly, for each $t<T$,  the convolution $\int_{0}^t f(t-s)dg(s)$ is defined as the Lebesgue--Stieltjes integral $\int_0^T \boldsymbol 1_{[0,t]} f(t-s) dg(s)$ whenever this latter quantity is well--defined.\vspace{0.4em}

\textbf{Outline}  Section~\ref{S:Main} states our main existence, uniqueness and stability  results together with the expression for the Fourier--Laplace transform.  Section~\ref{S:apriori} provides a-priori estimates for the solution. In Section~\ref{S:stability}, we derive a general stability results for stochastic Volterra equations with $L^1$--kernels.  These results are used to establish weak existence for the stochastic Volterra equation in Section~\ref{S:existence}. Furthermore, an existence result for Riccati--Volterra equations with  $L^1$--kernels is derived there.   Weak uniqueness is then established  by completely characterizing the Fourier--Laplace transform of the solution in terms of the  Riccati--Volterra equation of Section \ref{S:uniqueness}. In Section~\ref{S:Heston}, we apply our results to obtain existence, uniqueness and the characteristic function of the log-price in  hyper--rough Volterra Heston models.  Finally, we provide a more rigorous derivation of the stochastic Volterra equation satisfied by the local occupation time of the catalytic super--Brownian motion  in  Appendix~\ref{A:localtime}.

\section{Main results}\label{S:Main}
In this section, we present our main results together with the strategy we adopt. We start by making precise the concept of solution. 

We call $X$ a  weak solution to \eqref{eq:generalsve} for the input $(G_0,K,b,c,\nu)$, if there exists  a  filtered probability space $(\Omega,\mathcal F,(\mathcal F_t)_{t \geq 0}, \PP)$ supporting a non-decreasing, nonnegative, continuous and adapted process $X$ and a  semimartingale $Z$ whose characteristics are given by \eqref{eq:charZ} such that \eqref{eq:generalsve} holds $\PP$--almost surely. We stress that a weak solution is given by the triplet $(X,(\Omega,\mathcal F,(\mathcal F_t)_{t \geq 0}, \PP), Z )$. To ease notations we simply denote  the solution by $X$.  In this case,
$Z$ admits the following  decomposition 
\begin{align}\label{eq:decompZ}
Z_t = b X_t + M^c_t + M^d_t, \quad  t \geq 0,
\end{align}
where $M^c$ is a continuous local martingale of quadratic variation $cX$ and $M^d$ is a purely discontinuous local martingale given by $\int_{[0,t]\times \R_+} \zeta \left(\mu^Z(dt,d\zeta) - \nu(d\zeta)dX_t \right)$ and $\mu^Z$ is the jump measure of $Z$, we refer to \citet[II.2.5-6]{Jacod_2003}.  We say that weak uniqueness holds for the inputs $(G_0,K,b,c,\nu)$ if given two weak solutions   $(X,(\Omega,\mathcal F,(\mathcal F_t)_{t \geq 0}, \PP)  , Z ) $ and $(X', (\Omega',\mathcal F',(\mathcal F'_t)_{t \geq 0}, \PP'),Z')$, $X$ and $X'$ have the same finite dimensional marginals.

 One first notes that the formulation \eqref{eq:generalsve}, when $\nu\equiv 0$,   differs from the one given in \citet[Equation (3)]{Jusselin_2018}, where 
\begin{align*}
X_t= G_0(t) + \int_0^t \left(\int_0^{t-s}K(r) dr\right)  d Z_{s}.
\end{align*} 
Although these two formulations are equivalent, thanks to a stochastic Fubini theorem, the advantages of considering the formulation \eqref{eq:generalsve}  as starting point,  which is inspired by the `martingale problem' formulation of stochastic Volterra equations recently introduced in \citet{AJCPL:19}  will  become clear in the sequel.

The following lemma  establishes the link with stochastic Volterra equations with $L^2$--kernels, as the one studied for instance in \citet{AJCPL:19,Abi_Jaber_2017,cuchiero2020generalized}.
\begin{lemma}\label{L:spotvariance}
	Fix  $K \in L^2_{\rm loc}(\R_+,\R)$ and $g_0 \in L^1_{\rm loc}(\R_+,\R)$. Assume that there exists a non-decreasing continuous adapted process $X$ on some filtered probability space $(\Omega,\mathcal F,(\mathcal F_t)_{t \geq 0}, \PP)$ such that   \begin{align}\label{eq:barVVolterra} 
	X_t = \int_0^t g_0(s)ds + \int_0^t K(t-s) Z_s ds,
	\end{align} 
	 with  $Z$ given by \eqref{eq:decompZ}.
	Then,  
	$X = \int_0^{\cdot} Y_s ds$ where $Y$ is a nonnegative weak solution to the following stochastic Volterra equation
	\begin{align}\label{eq:svel2}
	Y_t = g_0(t) + \int_0^t K(t-s) dZ_s, \quad  \PP\otimes dt-a.e.
	\end{align}
	where the differential characteristics of $Z$ with respect to the Lebesgue measure are given by $(bY,cY,\nu(d\zeta)Y)$.\\
	Conversely, assume there exists a nonnegative weak solution $Y$ to the stochastic Volterra equation \eqref{eq:svel2} such that $\int_0^T Y_s ds<\infty$ for all $T>0$, then $X=\int_0^{\cdot} Y_s ds$ is a continuous non-decreasing solution to \eqref{eq:barVVolterra}.
\end{lemma}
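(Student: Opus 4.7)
The plan is to recognize that both identities \eqref{eq:barVVolterra} and \eqref{eq:svel2} are two Fubini-type rearrangements of the same quantity $\int_0^t \tilde K(t-s)\, dZ_s$, where $\tilde K(r) := \int_0^r K(\rho)\, d\rho$ is the primitive of $K$. Indeed, since $Z_0 = 0$ and $Z$ is a semimartingale, writing $Z_s = \int_0^s dZ_u$ and invoking a stochastic Fubini argument gives, at least formally,
\begin{equation*}
\int_0^t K(t-s)\, Z_s\, ds \;=\; \int_0^t \int_u^t K(t-s)\, ds\, dZ_u \;=\; \int_0^t \tilde K(t-u)\, dZ_u,
\end{equation*}
while the same change of order applied to $\int_0^t \int_0^r K(r-s)\, dZ_s\, dr$ yields $\int_0^t \int_s^t K(r-s)\, dr\, dZ_s = \int_0^t \tilde K(t-s)\, dZ_s$ as well. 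So the whole proof hinges on justifying this single Fubini swap in both directions and then reading off each of the two representations.

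For the forward direction, I would first check that under $K \in L^2_{\rm loc}$ the stochastic convolution $\int_0^{\cdot} K(\cdot - s)\, dZ_s$ is well defined as an element of $L^2(\Omega \times [0,T])$: the continuous martingale part uses $\langle M^c \rangle = cX$ with $X$ locally bounded (being continuous), the compensated jump part uses $\int \zeta^2 \nu(d\zeta) < \infty$ together with $X$ locally bounded, and the drift term $bX$ uses $K \in L^1_{\rm loc}$ (which follows from $L^2_{\rm loc}$ by Cauchy--Schwarz). These bounds permit a stochastic Fubini theorem (e.g.\ Veraar-type). Applied to \eqref{eq:barVVolterra}, one obtains
\begin{equation*}
X_t \;=\; \int_0^t g_0(s)\, ds + \int_0^t \tilde K(t-s)\, dZ_s \;=\; \int_0^t \Bigl(g_0(r) + \int_0^r K(r-s)\, dZ_s\Bigr)\, dr,
\end{equation*}
so setting $Y_r := g_0(r) + \int_0^r K(r-s)\, dZ_s$ (defined for a.e.\ $r$) gives $X = \int_0^{\cdot} Y_s\, ds$ and thus \eqref{eq:svel2}. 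Nonnegativity of $Y$ (a.e.) then follows from the Lebesgue differentiation theorem applied to the nondecreasing $X$. To pass from characteristics $(bX,cX,\nu X)$ of $Z$ to differential characteristics $(bY,cY,\nu(d\zeta) Y)$ with respect to Lebesgue measure, I would use that $dX_t = Y_t\, dt$ and the definition of differential characteristics via the integrator of the characteristic triple in \citet[II.2.9]{Jacod_2003}.

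The converse direction simply runs the same Fubini computation in reverse. Given a nonnegative $Y$ solving \eqref{eq:svel2} with $\int_0^T Y_s\, ds < \infty$ a.s., define $X = \int_0^{\cdot} Y_s\, ds$, which is automatically continuous and nondecreasing. Integrating \eqref{eq:svel2} in time and reapplying stochastic Fubini produces $\int_0^t K(t-s)\, Z_s\, ds$ on the right, giving \eqref{eq:barVVolterra}; the characteristics of $Z$ then upgrade from $(bY,cY,\nu(d\zeta) Y)$ (differential, w.r.t.\ Lebesgue) to $(bX,cX,\nu(d\zeta) X)$ (w.r.t.\ the identity time change), i.e.\ \eqref{eq:charZ}.

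The only real technical obstacle is the stochastic Fubini step, and specifically ensuring that the jump-measure part (beyond the continuous martingale and drift parts) is handled with the correct integrability against the deterministic kernel $K(t-\cdot) \in L^2_{\rm loc}$; this is where the hypothesis $\int \zeta^2 \nu(d\zeta) < \infty$ and local boundedness of $X$ enter in an essential way. Everything else is a bookkeeping exercise.
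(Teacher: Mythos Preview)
Your proposal is correct and follows essentially the same approach as the paper: both proofs reduce to a single stochastic Fubini swap transforming $\int_0^t K(t-s)Z_s\,ds$ into $\int_0^t\bigl(\int_0^s K(s-r)\,dZ_r\bigr)\,ds$, then read off the density $Y$ and its nonnegativity from the monotonicity of $X$. Your version is somewhat more explicit about the intermediate primitive $\tilde K$ and the integrability checks underpinning Fubini, but the argument is the same.
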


\begin{proof}
	Fix $t \geq 0$. An application of a stochastic Fubini theorem, see \citet[Lemma 3.2]{AJCPL:19},   yields
	\begin{align*}
	\int_0^t K(t-s) Z_s ds &= \int_0^t K(s)  \left(  \int_0^{t-s}  dZ_s \right) ds \\
	&= \int_0^t  \left(  \int_0^{t-r}  K(s) ds \right)  dZ_r \\
	&= \int_0^t  \left(  \int_0^{t}  K(s-r) \mathbf 1_{\{r \leq s \}} ds \right) dZ_r \\
	&= \int_0^t  \left(  \int_0^{s}  K(s-r)  dZ_r \right)   ds.
	\end{align*}
	Thus, $X$ admits a density $Y$  with respect to the Lebesgue measure, such that 
	\begin{align*}
	Y_t = g_0(t) + \int_0^t K(t-r) dZ_r,
	\end{align*}
	and the characteristics of $Z$ read
	 $$\left(b\int_0^{\cdot} Y_s ds , c\int_0^{\cdot} Y_s ds, \int_{[0,\cdot]\times \R_+}   Y_s ds\nu(d\zeta)  \right).$$
	Since $X$  is non-decreasing almost surely, $Y$ is nonnegative $\PP \otimes dt$. 
	 The claimed stochastic Volterra equation \eqref{eq:svel2} readily follows. The  converse direction follows along the same lines by integrating both sides of \eqref{eq:svel2} and applying a stochastic Fubini theorem as above to get \eqref{eq:barVVolterra}. 
\end{proof}

\subsection{Uniqueness and Fourier--Laplace transform}
{We start by stating our first main result concerning the representation of the Fourier--Laplace transform of the joint process $(X,M^c,M^d)$ and the weak uniqueness statement for \eqref{eq:generalsve} in terms of a solution to the  Riccati--Volterra equation
		{\begin{align}
		\psi(t) &= \int_0^t K(t-s)F(s,\psi(s))ds,  \label{eq:RiccatiL2a}\\
		F(s,u) &=  f_0(s) + \frac 1 2 cf^2_1(s)  + \left(b+cf_1(s) \right)u + \frac c 2 u^2 \nonumber 
		\\ &\quad\quad  + \int_{\R_+} \left(e^{(f_2(s)+u)\zeta}-1 -(f_2(s)+u)\zeta \right)\nu(d\zeta), \label{eq:RiccatiL2b}
		\end{align}
where $f_0,f_1,f_2:\R_+ \mapsto \mathbb B$ are  suitable functions.	
}
We introduce  the following process which enters in the representation of the Fourier--Laplace transform:
\begin{align}
G_t(s) &= G_0(s) + \int_t^{s} g_t(u) du, \quad   t\leq s, \label{eq:Gt} \\
g_t(u) &= \int_0^t K(u-r)dZ_r, \quad  \quad  \,\, t<u.\label{eq:gt}
\end{align}
The stochastic convolution  $\int_0^t K(s-r)dZ_r= \int_0^t \Delta_{s-t} K(t-r)dZ_r$ is well-defined as an It\^o integral, for all $s>t$, provided that  the  shifted kernels $\Delta_h K:=K(\cdot+h)$ are in $L^2_{\rm loc}(\R_+,\R)$ for any $h>0$.

	\begin{theorem}\label{T:MainIntrouniqueness} Fix $b\in \R,c\geq 0$ and $\nu$ a nonnegative measure supported on $\R_+$ such that $\int_{\R_+}\zeta^2 \nu(d\zeta)<\infty$.
		Fix a kernel  $K \in L^1_{\rm loc}(\R_+,\R)$ such that its shifted kernels $\Delta_h K=K(\cdot + h)$ are in $L^2_{\rm loc}(\R_+,\R)$ for any $h>0$,  and a non-decreasing continuous  function $G_0:\R_+\mapsto \R$.	Fix $T\geq 0$ and  three continuous functions $f_0,f_1,f_2:[0,T]\mapsto \C$. 
		 Assume that  there exists a solution $\psi\in C([0,T],\mathbb C)$  to the Riccati--Volterra equation \eqref{eq:RiccatiL2a}-\eqref{eq:RiccatiL2b}	such that 
		\begin{align}\label{eq:condmomentpsi}
		\sup_{t\leq T} \int_{\R_+} e^{\Re(f_2(t)+\psi(t)) \zeta} \zeta^2 \nu(d\zeta) < \infty 
		\end{align}
Then, for any weak solution $X$ of \eqref{eq:generalsve}, the joint conditional Fourier--Laplace transform of
$$ R_{t,T}=  \int_t^T \! f_0(T-s)dX_s + \int_t^T \! f_1(T-s)dM_s^c + \int_t^T \! f_2(T-s)dM_s^d,$$
 {where $M^c$ and $M^d$ are the local martingales appearing in \eqref{eq:decompZ},}  is given by 
		\begin{align}
		&\EE\left[ \exp\left( R_{t,T} \right) \bigg| \mathcal F_t \right] = \exp\left(\int_t^T \! F(T-s, \psi(T-s))dG_t(s)\right), \quad t\leq T, \label{eq:affinetransform}
		\end{align}
	 where $G_t$ is defined as in \eqref{eq:Gt}. In particular, if the Riccati--Volterra equation  \eqref{eq:RiccatiL2a}-\eqref{eq:RiccatiL2b} with $f_2=f_3\equiv 0$, admits a continuous solution  $\psi$ such that $\sup_{t\leq T}\int_{\R_+} e^{\Re(\psi(t))\zeta}\zeta^2\nu(d\zeta)<\infty$ for any continuous function $f_0:[0,T]\mapsto i\R$ and any $T\geq 0$, then weak uniqueness holds for \eqref{eq:generalsve} for the input $(G_0,K,b,c,\nu)$. 
	\end{theorem}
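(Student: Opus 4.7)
My plan is to implement the standard exponential-martingale duality familiar from the affine Markov case, with the finite-dimensional state replaced by the forward curve $G_t(\cdot)$. Set
\begin{align*}
Y_t &:= \int_0^t f_0(T{-}s)\,dX_s + \int_0^t f_1(T{-}s)\,dM^c_s + \int_0^t f_2(T{-}s)\,dM^d_s, \\
\Phi_t &:= \int_t^T F(T{-}s,\psi(T{-}s))\,dG_t(s),
\end{align*}
and consider the candidate $M_t := \exp(Y_t+\Phi_t)$ on $[0,T]$. Since $\Phi_T=0$ and $Y_T-Y_t=R_{t,T}$, once $M$ is shown to be a true (complex) martingale the identity $\EE[M_T\mid\mathcal F_t]=M_t$ rearranges exactly to \eqref{eq:affinetransform}.

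The heart of the argument is a careful semimartingale decomposition of $\Phi_t$ in $t$. I would split $dG_t(s)=dG_0(s)+g_t(s)\,ds$, apply a stochastic Fubini theorem of the type in \citet[Lemma~3.2]{AJCPL:19} to the $g_t$-piece, and perform an increment analysis based on $g_{t+h}(s)-g_t(s)=\int_t^{t+h}K(s-r)\,dZ_r$, aiming at
\begin{align*}
d\Phi_t = -F(T{-}t,\psi(T{-}t))\bigl(dG_0(t)+g_t(t^+)\,dt\bigr) + \Bigl(\int_t^T F(T{-}s,\psi(T{-}s))K(s-t)\,ds\Bigr)\,dZ_t.
\end{align*}
The key cancellation comes from the Riccati--Volterra equation \eqref{eq:RiccatiL2a} evaluated at $T-t$: the substitution $s=T-r$ turns it into
\begin{align*}
\psi(T-t) = \int_t^T K(s-t)\,F(T-s,\psi(T-s))\,ds.
\end{align*}
Combined with the identity $dX_t=dG_0(t)+g_t(t^+)\,dt$ (clear in the $L^2$-kernel setting via Lemma~\ref{L:spotvariance}, and to be recovered in the $L^1$-setting through the $L^2$-approximation and stability results of Section~\ref{S:stability}), this collapses $d\Phi_t$ to $-F(T{-}t,\psi(T{-}t))\,dX_t+\psi(T-t)\,dZ_t$.

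Next, I would apply It\^o's formula to $\exp(Y_t+\Phi_t)$, using $dZ_t=b\,dX_t+dM^c_t+dM^d_t$, $d\langle M^c\rangle_t=c\,dX_t$, and the compensator $\nu(d\zeta)\,dX_t$ of the jumps of $Z$. Writing $\psi=\psi(T-t)$ and $f_i=f_i(T-t)$ for brevity, the drift of $dM_t/M_{t-}$ against $dX_t$ reads
\begin{align*}
f_0 - F(T{-}t,\psi) + b\psi + \tfrac{c}{2}(f_1+\psi)^2 + \int_{\R_+}\bigl(e^{(f_2+\psi)\zeta}-1-(f_2+\psi)\zeta\bigr)\nu(d\zeta),
\end{align*}
which vanishes identically after expanding $F$ from \eqref{eq:RiccatiL2b} and using $\tfrac12 cf_1^2+cf_1\psi+\tfrac{c}{2}\psi^2=\tfrac c2(f_1+\psi)^2$. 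Hence $M$ is a complex local martingale. To upgrade it to a true martingale I would localise along a suitable sequence and pass to the limit using the a-priori moment bounds of Section~\ref{S:apriori} together with \eqref{eq:condmomentpsi}, which dominates the jump contribution uniformly in $t\le T$. I expect the main technical obstacle to be making the derivation of $d\Phi_t$ fully rigorous when $K\notin L^2_{\rm loc}$ near $0$: the pointwise limit $g_t(t^+)$ and the identity $dX_t=dG_0(t)+g_t(t^+)\,dt$ are both delicate in that regime, so the cleanest route is likely to prove \eqref{eq:affinetransform} first for $L^2$-kernels, where Lemma~\ref{L:spotvariance} supplies a bona fide density, and then transfer to $L^1$-kernels via Section~\ref{S:stability}.

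Finally, for weak uniqueness I would specialise to $f_1=f_2\equiv 0$ and arbitrary continuous $f_0:[0,T]\to i\R$: formula \eqref{eq:affinetransform} then expresses $\EE\bigl[\exp(i\int_0^T f_0(T-s)\,dX_s)\bigr]$ as a deterministic functional of $(G_0,K,b,c,\nu,f_0)$ alone. Since $X$ is continuous and non-decreasing, a bounded-convergence argument approximating staircase functions by continuous $f_0$ identifies the joint characteristic function of every finite marginal $(X_{t_1},\ldots,X_{t_n})$, and Fourier inversion then forces equality of all finite-dimensional distributions of any two weak solutions.
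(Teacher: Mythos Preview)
Your overall strategy---build the exponential candidate $M_t=\exp(Y_t+\Phi_t)$, show its drift vanishes via the Riccati equation, upgrade from local to true martingale, then read off \eqref{eq:affinetransform}---matches the paper, and your drift computation is correct. But there is a genuine gap in how you propose to handle the $L^1$-kernel case.

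Your computation of $d\Phi_t$ requires $g_t(t^+)$ and the density relation $dX_t=dG_0(t)+g_t(t^+)\,dt$; as you yourself note, both break down when $K\notin L^2_{\rm loc}$, since then $X$ need not be absolutely continuous. Your proposed fix---prove \eqref{eq:affinetransform} first for $L^2$-kernels and then ``transfer to $L^1$-kernels via Section~\ref{S:stability}''---is circular. Theorem~\ref{T:stability} says that \emph{limits} of $L^2$-solutions are $L^1$-solutions; it does not say that an \emph{arbitrary} weak solution $X$ (which is what the theorem concerns) arises as such a limit. To identify the given $X$ with a stability limit you would already need uniqueness, which is precisely what you are trying to establish.

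The paper avoids this by never computing $d\Phi_t$. It first writes down the semimartingale $V^T$ \emph{explicitly} (equations \eqref{eq:processweak1}--\eqref{eq:processweak2}), checks by It\^o that $\exp(V^T)$ is a local martingale, and then proves the \emph{static} identity $V^T_t=Y_t+\Phi_t$ at each fixed $t$ (Lemma~\ref{L:cf}). That identity is obtained by replacing $K$ with the shifted kernels $\Delta_hK\in L^2_{\rm loc}$, applying stochastic Fubini at the level of convolutions with the \emph{fixed} driving semimartingale $Z$ of the given solution, and sending $h\to0$ via dominated convergence and Helly's second theorem. Because the approximation is pathwise for the given $(X,Z)$, no circularity arises.

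A second point: your upgrade from local to true martingale (``localise and pass to the limit using the a-priori moment bounds'') is underspecified. The paper's Lemma~\ref{L:truem} bounds $|\exp(V^T)|$ by a real Dol\'eans--Dade exponential $\mathcal E(U)$, stops at $\tau_n=\inf\{t:X_t>n\}$, defines $\Q^n$ by $d\Q^n/d\P=\mathcal E(U)_{\tau_n}$, and shows $\Q^n(\tau_n<T)\to0$ by applying Lemma~\ref{L:apriori} \emph{under $\Q^n$}; condition \eqref{eq:condmomentpsi} is exactly what makes the Girsanov-shifted characteristics of $Z$ satisfy the growth bound \eqref{eq:condgrowthuni} uniformly in $n$. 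A plain localisation under $\P$ does not suffice.
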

	
	\begin{proof} 
The proof is detailed in Section~\ref{S:uniqueness}.
	\end{proof}

The following example illustrates the applicability of Theorem~\ref{T:MainIntrouniqueness} in the case of pure jump Hawkes processes.  The example will be continued in Example~\ref{E:Hawkesapprox} to illustrate the scaling limits of Hawkes processes mentioned in the introduction. Section~\ref{S:Heston} provides another example of application of Theorem~\ref{T:MainIntrouniqueness} in the context of rough volatility modeling.  
\begin{example}\label{E:hawkeschar}
	Let $N$ denote a counting process with instantaneous intensity 
	\begin{align*}
	\lambda_t = g_0(s)+ \int_0^t K(t-s)dN_s,
	\end{align*}
	for some $g_0, K\in L^1_{\rm loc}(\R_+,\R_+)$. We are interested in the computation of the joint Fourier--Laplace transform of $(\lambda,N)$, more precisely of the quantity:
	\begin{align}
	\int_0^T h_0(T-s)\lambda_s ds + \int_0^T h_2(T-s)dN_s, 
	\end{align}
	for some continuous functions $h_0,h_2:\R_+ \mapsto \C$. 
	By an application of Lemma~\ref{L:spotvariance}, the integrated intensity $X=\int_0^{\cdot} \lambda_sds$ solves \eqref{eq:generalsve} with $G_0=\int_0^{\cdot}g_0(s)ds$ and $Z=N$ with affine characteristics in \eqref{eq:charZ}  given by 
	$$ (X, 0 , \delta_1(d\zeta) X),$$ 
	meaning that $(b,c,\nu)=(1,0,\delta_1)$ and $M^d=(N-X)$. 
Under the assumptions of Theorem~\ref{T:MainIntrouniqueness}, 	setting $f_0=h_0-h_2$, $f_1\equiv0$ and $f_2=h_2$ 
the joint Fourier-Laplace transform of $(\Lambda, N)$ is given by 
\begin{align}
&\EE\left[ \exp\left( \int_0^T h_0(T-s)\lambda_s ds + \int_0^T h_2(T-s)dN_s \right) \right] = \exp\left(\int_0^T \! F(T-s, \psi(T-s))g_0(s)ds\right) 
\end{align}
	where   the Riccati--Volterra equations \eqref{eq:RiccatiL2a}--\eqref{eq:RiccatiL2b} read 
	\begin{align}
	\psi(t) &= \int_0^t K(t-s)F(s,\psi(s))ds,  \quad t \leq T,\label{eq:RiccatiLhawkesa}\\
	F(s,u) &=  h_0(s)  +  e^{(h_2(s)+u)}-1 . \label{eq:Riccatihawkesb}
	\end{align}
	We refer to Theorem~\ref{T:existenceRiccatiL1} and Remark~\ref{R:riccaticond} below for the existence of such $\psi$.  
In particular, setting $h_0\equiv0$ and $h_2\equiv ia$ for some $a\in \R$, we recover the formula of \citet[Theorem 2]{hawkes1974cluster} derived using cluster representations of Hawkes processes, see also  \citet[Theorem 3.1]{El_Euch_2019}. 
\end{example}

Under additional assumptions on $K$ we prove the existence of a solution to the Riccati--Volterra equations \eqref{eq:RiccatiL2a}--\eqref{eq:RiccatiL2b}. For this we recall the notion of the resolvent of the first kind of a kernel: a measure $L$ of locally bounded variation is called  a resolvent of the first kind of the kernel $K \in L^1_{\rm loc}(\R_+,\R)$ if 
\begin{align}\label{eq:resolventL}
\int_{[0,t]} K(t-s)L(ds)= 1,
\quad t \geq 0. 
\end{align}  
If such $L$ exists, then it   is unique by \citet[Theorem 5.2.2]{gripenberg1990volterra}. We consider the following condition on the kernel $K$:
\begin{equation} \label{eq:K orthant}
\begin{minipage}[c][5em][c]{.8\textwidth}
\begin{center}
the kernel is nonnegative, non-increasing and {continuously differentiable} on $(0,\infty)$, and its resolvent of the first kind $L$ is nonnegative and non-increasing in the sense that $s\mapsto L([s,s+t])$ is non-increasing for all $t\ge0$.
\end{center}
\end{minipage}
\end{equation}

We note in \eqref{eq:K orthant} that any nonnegative and non-increasing kernel that is not identically zero admits a resolvent of the first kind; see \citet[Theorem~5.5.5]{gripenberg1990volterra}. The following example provides a large class of kernels for which \eqref{eq:K orthant} is satisfied. 
\begin{example} \label{E:CM}	
	If $K$ is completely monotone on $(0,\infty)$, then \eqref{eq:K orthant} holds due to \citet[Theorem~5.5.4]{gripenberg1990volterra}.  Recall that a function $f$ is called completely monotone on $(0,\infty)$ if it is infinitely differentiable there with $(-1)^k f^{(k)}(t) \geq 0$ for all $t>0$ and $k\geq 0$. We also note  that, for each $h >0$, the shifted kernel $\Delta_{h} K$ are  again completely monotone on $[0,\infty)$ so that \eqref{eq:K orthant} holds also for $\Delta_{h} K$. In particular, $\Delta_h K \in L^2_{\rm loc}(\R_+,\R)$, for each $h>0$.  This covers, for instance, any constant positive kernel, the fractional kernel $t^{H-1/2}$ with $H\in(-1/2,1/2]$, and the exponentially decaying kernel ${\rm e}^{-\eta t}$ with $\eta>0$. Moreover,  sums and products of completely monotone kernels are completely monotone.	By combining the above examples we find that the Gamma kernel $K(t)=t^{H-1/2}{\rm e}^{-\eta t}$ for $H \in (-1/2,1/2]$ and $\eta\ge0$ satisfies~\eqref{eq:K orthant}. 
\end{example}

The following theorem establishes the existence of solutions to the Riccati--Volterra equation \eqref{eq:RiccatiL2a}-\eqref{eq:RiccatiL2b} under structural assumptions on $f_0,f_1,f_2$.  

\begin{theorem}\label{T:existenceRiccatiL1} Fix $b\in \R,c\geq 0$ and $\nu$ a nonnegative measure supported on $\R_+$ such that $\int_{\R_+}\zeta^2 \nu(d\zeta)<\infty$. Let $f_0,f_1,f_2:\R_+ \mapsto \C$ be three continuous functions  
	such that 
	\begin{align}\label{eq:coeffaf} 
	\Re(f_0) + \frac c 2 \Re(f_1)^2 + \frac{1} 2 \int_{\R+} \zeta^2 \nu(d\zeta) \Re(f_2)^2 \leq 0 \quad \mbox{and} \quad  \Re(f_2) \leq 0.
	\end{align}
	Fix a  kernel $K \in L^1_{\rm loc}(\R_+,\R)$ satisfying \eqref{eq:K orthant}. Then, there exists a continuous solution $\psi$ to \eqref{eq:RiccatiL2a}-\eqref{eq:RiccatiL2b} such that $\Re(\psi(t))\leq 0$, for all $t\geq 0$.
\end{theorem}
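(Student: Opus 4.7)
The overall strategy is an approximation argument: replace $K$ by a sequence of $L^2_{\rm loc}$-kernels $K^n$ that still satisfy~\eqref{eq:K orthant}, invoke the existing $L^2$-Riccati--Volterra theory to obtain solutions $\psi^n$ of the approximate equations with $\Re\psi^n\le 0$, and then extract a continuous limit point by Arzel\`a--Ascoli, passing to the limit in the integral equation.

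For the approximation, a natural choice is the shifted kernel $K^n := \Delta_{1/n} K = K(\cdot + 1/n)$; by continuous differentiability of $K$ on $(0,\infty)$ it is locally bounded, hence in $L^2_{\rm loc}$, and nonnegativity, monotonicity and the sign/monotonicity of the resolvent of the first kind are inherited (directly when $K$ is completely monotone, and more generally by manipulation of the resolvent identity). Applying the $L^2$-theory of \citet{AJCPL:19,cuchiero2020generalized} under the structural condition~\eqref{eq:coeffaf} then yields, for each $n$, a continuous $\psi^n:\R_+\to\C$ with $\Re\psi^n\le 0$ satisfying
\begin{equation*}
    \psi^n(t) = \int_0^t K^n(t-s) F(s, \psi^n(s))\, ds, \qquad t\ge 0.
\end{equation*}

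The analytic heart of the argument is the uniform-in-$n$ control of $(\psi^n)$ on each compact $[0,T]$. From $\Re\psi^n\le 0$ together with $\Re f_2\le 0$, the elementary bound $|e^{w\zeta}-1-w\zeta|\le\tfrac12 |w|^2\zeta^2$ valid for $\Re w\le 0$ delivers $|F(s,\psi^n(s))|\le C_T(1+|\psi^n(s)|^2)$ on $[0,T]$. Inserting this into the convolution equation and combining with the local integrability and monotonicity of $K$ gives a quadratic Volterra--Gronwall inequality which can be closed on a short interval and propagated across $[0,T]$ by a bootstrap step, producing $\sup_n\|\psi^n\|_{L^\infty([0,T])}\le C(T)$. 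Equicontinuity on $[0,T]$ then follows from the modulus estimate
\begin{equation*}
    |\psi^n(t')-\psi^n(t)|\le C_T\int_0^t |K^n(t'-s)-K^n(t-s)|\,ds + C_T\int_t^{t'} K^n(t'-s)\,ds,\quad 0\le t\le t'\le T,
\end{equation*}
together with the $L^1$-continuity of translations, uniformly in $n$. Arzel\`a--Ascoli extracts a locally uniformly convergent subsequence $\psi^{n_k}\to\psi$, and since $K^n\to K$ in $L^1_{\rm loc}$ and $F(s,\cdot)$ is continuous, dominated convergence in the integral equation gives that $\psi$ solves~\eqref{eq:RiccatiL2a}--\eqref{eq:RiccatiL2b}; the constraint $\Re\psi\le 0$ is preserved under uniform convergence.

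The main obstacle is the a priori bound step: because $F$ has genuine quadratic growth in $u$ through both the $\tfrac c2 u^2$ term and the jump integral, the convolution inequality is superlinear and does not close by a naive Gronwall argument. The structural condition~\eqref{eq:coeffaf} must be exploited precisely to ensure that, in tandem with $\Re\psi^n\le 0$ and the monotonicity of $K$ and of its resolvent of the first kind, no finite-time blow-up of $\psi^n$ can occur, uniformly in $n$.
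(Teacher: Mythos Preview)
Your approximation strategy is genuinely different from the paper's proof, which works \emph{directly} with the $L^1$-kernel: it first modifies $F$ to a globally continuous $\tilde F$ by projecting the real part of $u$ onto $(-\infty,0]$, invokes a local existence result for Volterra equations (\citet[Theorem~12.1.1]{gripenberg1990volterra}) to produce a maximal solution on $[0,T_\infty)$, then shows $\Re\psi\le 0$ by a comparison argument for \emph{linear} Volterra equations (\citet[Theorem~C.1]{Abi_Jaber_2019}), and finally precludes blow-up by rewriting the quadratic term as $\tfrac{c\psi}{2}\cdot\psi$ with $\Re(\tfrac{c\psi}{2})\le 0$ and appealing to a dedicated bound for linear complex Volterra equations with coefficients of nonpositive real part (\citet[Theorem~C.4]{Abi_Jaber_2019}).

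Your sketch has two gaps. First, and most seriously, the uniform-in-$n$ a~priori bound is not established: you yourself flag that the quadratic growth of $F$ prevents closure by a naive Gronwall argument, and the phrase ``closed on a short interval and propagated by a bootstrap step'' does not survive scrutiny, since for a quadratic convolution inequality the local interval length shrinks as the local bound grows, so iteration need not cover $[0,T]$. The paper's way out is precisely the linearization trick above, which requires $\Re\psi^n\le 0$ as an \emph{input} to the no-blow-up step and relies on the kernel structure in~\eqref{eq:K orthant}; you would need to reproduce an argument of this type uniformly in $n$, not merely invoke the existence of $\psi^n$. Second, your claim that the shifted kernels $\Delta_{1/n}K$ inherit~\eqref{eq:K orthant} ``by manipulation of the resolvent identity'' is not justified in general; the paper only asserts this for completely monotone $K$ (Example~\ref{E:CM}), and in Theorem~\ref{T:MainIntro} it is imposed as a separate hypothesis. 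Since Theorem~\ref{T:existenceRiccatiL1} assumes~\eqref{eq:K orthant} only for $K$ itself, your approximation route may require a strictly stronger assumption than the statement allows.
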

\begin{proof}
	We refer to Section~\ref{S:existenceRiccati}.
\end{proof}
}

\begin{remark}\label{R:riccaticond}
\begin{itemize}
	\item 
Under \eqref{eq:coeffaf},  \eqref{eq:condmomentpsi} follows from the fact that $\Re(\psi)\leq 0$ as stated in Theorem~\ref{T:existenceRiccatiL1}.    
\item 
The condition \eqref{eq:coeffaf} is satisfied for instance if $\Re(f_0)\leq 0$ and $f_1,f_2:\R_+ \mapsto i\R$.  In particular, the second part of Theorem~\ref{T:MainIntrouniqueness} provides weak uniqueness for \eqref{eq:generalsve}.   Going back to Example~\ref{E:hawkeschar}, the existence of a solution to the corresponding Riccati--Volterra equation  is ensured provided that $\Re(h_0)\leq 0$ and $h_2:\R_+\mapsto i\R $.    
\end{itemize}
	\end{remark}

\begin{remark}
	If $K$ is in $L^2_{\rm loc}(\R_+,\R)$, then Theorems~\ref{T:MainIntrouniqueness}  and \ref{T:existenceRiccatiL1} agree with \citet[Theorem 5.12]{cuchiero2020generalized}  for the jump case;  if in addition $\nu \equiv 0$, then one recovers  \citet[Theorem 7.1]{Abi_Jaber_2017} and \citet[Theorem 2.3]{AJEE:19markovian} for the continuous case.
\end{remark}

\subsection{Stability and existence}

We now present our existence and stability results for solutions to \eqref{eq:generalsve}.
Our strategy for constructing solutions  with $L^1$-kernels relies on an approximation argument using $L^2$-kernels combined with Lemma~\ref{L:spotvariance}. 
To fix ideas, set $\nu\equiv 0$ and assume  $G_0=\lim_{n \to \infty} G_0^n$, with $G^n_0 = \int_0^{\cdot}g^n_0(s)ds$, for some sequence of $L^1_{\rm loc}$-functions $(g^n_0)_{n \geq 1}$.
Starting from a  $L^1_{\rm loc}$--kernel $K$, assume that there exists a sequence of $L^2_{\rm loc}$--kernels  $(K^n)_{n \geq 1}$ such that 
$$  K^n \to K, \quad \mbox{in }  L^1_{\rm loc}, \quad \mbox{as $n \to \infty$}.$$
Then, for each $n \geq 1$, $K^n$ being locally square--integrable, under suitable conditions on $(g_0^n,K^n)$,  the results in \citet{Abi_Jaber_2017,AJEE:19markovian} provide existence of nonnegative solution $Y^n$ for \eqref{eq:svel2} with $(g_0,K)$ replaced by $(g_0^n,K^n)$.  Setting $X^n= \int_0^{\cdot} Y^n_s ds $, Lemma \ref{L:spotvariance} provides a solution $X^n$ to \eqref{eq:generalsve} for the input $(G_0^n,K^n)$, that is 
\begin{align}\label{eq:Xnintro}
X^n_t = G_0^n(t) + \int_0^t K^n(t-s)Z^n_s ds, 
\end{align}
where the characteristics of $Z^n$ are $(bX^n,cX^n,0)$. Provided that   $(X^n)_{n \geq 1}$ is tight, it will admit a convergent subsequence towards a limiting process $X$. Finally, sending $n\to \infty$, one would expect  $X$ to solve \eqref{eq:generalsve} for $\nu \equiv 0$. 

We will adapt the same strategy in the case of jumps. Before this, we state our generic stability result.
\begin{theorem}\label{T:stability}   Assume that there exist sequences of  coefficients $(b^n,c^n,\nu^n(d\zeta))_{n \geq 1}$ with $\int_{\R}\zeta^2 \nu^n(d\zeta)<\infty$,  non-increasing kernels $(K^n)_{n\geq 1}$ and functions $(G_0^n)_{n \geq 1}$   such that
	\begin{enumerate}
		\item\label{eq:stabilityi} 
		$$b^n\to b, \quad  c^n + \int_{\R} \zeta^2 \nu^n(d\zeta)\to  c + \int_{\R} \zeta^2 \nu(d\zeta) \; \mbox{and} \; \int_{\R}h(\zeta)\nu^n(d\zeta)\to \int_{\R}h(\zeta)\nu(d\zeta),$$
		for any continuous and bounded function $h$ vanishing around zero,    as $n\to \infty$, for some $b\in \R$, $c \geq 0$ and $\nu(d\zeta)$ a nonnegative  measure  such that $\int_{\R} \zeta^2 \nu(d\zeta)< \infty$.
		\item \label{eq:stabilityii}
		$   \int_0^t| K^n(s)-K(s)|ds \to 0$, $\mbox{as $n\to \infty$}$, $t \geq 0$, for some non-increasing kernel $K \in L^1_{\rm loc}(\R_+,\R)$.
		\item \label{eq:stabilityiv}
		$\sup_{t\leq T} |G_0^n(t) - G_0(t)| \to 0, \; \mbox{as $n\to \infty$}, \;  T \geq 0$, for some continuous function $G_0$.
	\end{enumerate}
	Then, any   sequence of continuous nonnegative and non-decreasing solutions  $(X^n)_{n \geq 1}$ to \eqref{eq:generalsve} for the respective inputs $(G_0^n,K^n,b^n,c^n,\nu^n)$, is tight on the space of continuous functions $C([0,T],\RR)$ endowed with the uniform topology, for each $T >0$. Furthermore, any limit point $X$ is a continuous non-decreasing solution to  \eqref{eq:generalsve} for the input $(G_0,K,b,c,\nu)$.
\end{theorem}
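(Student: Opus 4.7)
The argument splits naturally into two parts: first, establishing tightness of $(X^n)_{n\geq 1}$ in $C([0,T],\R)$ with the uniform topology; then identifying any subsequential limit as a weak solution of \eqref{eq:generalsve} for the input $(G_0,K,b,c,\nu)$.

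\textbf{Tightness.} I would first appeal to the a-priori estimates of Section~\ref{S:apriori} to obtain a uniform bound $\sup_n \sup_{t\leq T}\EE[X^n_t] \leq C_T$. Combined with the semimartingale decomposition $Z^n = b^n X^n + M^{c,n} + M^{d,n}$, the Burkholder--Davis--Gundy inequality, and the identity $\EE[\langle M^{c,n}\rangle_T + [M^{d,n}]_T] = (c^n + \int\zeta^2\nu^n(d\zeta))\,\EE[X^n_T]$, condition~\eqref{eq:stabilityi} yields a uniform bound on $\EE[\sup_{t\leq T}|Z^n_t|^2]$. Since each $X^n$ is continuous and non-decreasing, tightness in $C([0,T],\R)$ reduces to a uniform-in-$n$ control of the modulus $\omega_{X^n}(\delta):=\sup_{|t-s|\leq \delta}(X^n_t - X^n_s)$. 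For $s\leq t$, equation \eqref{eq:generalsve} gives
\begin{align*}
\EE[X^n_t - X^n_s] = G_0^n(t) - G_0^n(s) + \EE\!\left[\int_0^s (K^n(t-u)-K^n(s-u))Z^n_u\,du + \int_s^t K^n(t-u)Z^n_u\,du\right],
\end{align*}
and the uniform convergence of $G_0^n$ to a continuous $G_0$, the $L^1_{\rm loc}$-convergence of $K^n$ to $K$ (which implies equicontinuity of the shifts of $K^n$ in $L^1$), together with the moment bound on $Z^n$, yield $\sup_n \EE[\omega_{X^n}(\delta)]\to 0$ as $\delta\to 0$.

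\textbf{Identification.} Having extracted $X^{n_k}\to X$ in $C([0,T],\R)$, I would upgrade to joint tightness of $(X^n,Z^n)$ in $C([0,T])\times D([0,T])$ via Aldous' criterion applied to the semimartingale decomposition of $Z^n$ using the uniform moment bounds, and then invoke Skorokhod's representation to assume almost sure convergence on a common probability space. Two facts must be verified. \emph{(a)} The convolution passes to the limit:
\begin{align*}
\int_0^t K^{n_k}(t-s)Z^{n_k}_s\,ds \longrightarrow \int_0^t K(t-s)Z_s\,ds,
\end{align*}
which follows from Vitali's theorem using the uniform integrability of $Z^n$ and the $L^1$-convergence of $K^n$; combined with $X^{n_k}\to X$ and $G_0^{n_k}\to G_0$, this gives the limiting \eqref{eq:generalsve} for $(X,Z)$. \emph{(b)} The limit $Z$ has affine characteristics $(bX,cX,\nu(d\zeta)X)$: for $\phi\in C^2_b(\R)$, It\^o's formula expresses the process
\begin{align*}
\phi(Z^n_t) - \phi(0) - \int_0^t \left(b^n \phi'(Z^n_{s-}) + \tfrac{c^n}{2}\phi''(Z^n_{s-}) + \int_{\R_+}\!\bigl(\phi(Z^n_{s-}+\zeta)-\phi(Z^n_{s-})-\zeta\phi'(Z^n_{s-})\bigr)\nu^n(d\zeta)\right)dX^n_s
\end{align*}
as a local martingale; passing to the limit using condition~\eqref{eq:stabilityi} (convergence of the L\'evy-type coefficients) and the weak convergence of the measures $dX^n$ to $dX$ on $[0,T]$ (implied by uniform convergence of the continuous non-decreasing $X^n$) yields the analogous martingale property for $(b,c,\nu,X,Z)$, characterizing the affine characteristics of $Z$.

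\textbf{Main obstacle.} The crux lies in step \emph{(b)} in the $L^1$-setting, where $dX$ may be singular with respect to Lebesgue measure: standard SDE identification techniques that rely on the absolute continuity of $X$ are unavailable, so the argument must pass to the limit directly in integrals with respect to $dX^n$ whose limit $dX$ is only recovered weakly. The compensator of the jump part of $Z^n$ involves the product measure $\nu^n(d\zeta)\otimes dX^n_t$ on $\R_+\times [0,T]$, and jointly passing to its weak limit $\nu(d\zeta)\otimes dX_t$ is the most delicate aspect. This will be controlled by the $\zeta^2$-uniform integrability furnished by condition~\eqref{eq:stabilityi} together with the uniform moment bounds on $Z^n$, which guarantee the required uniform integrability of the pre-limit integrands.
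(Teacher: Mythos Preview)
Your outline is correct and closely parallels the paper's proof; the overall architecture (uniform moment bounds $\Rightarrow$ tightness of $(X^n,Z^n)$ in $C\times D$ $\Rightarrow$ Skorokhod $\Rightarrow$ pass to the limit in the convolution and identify the characteristics) is the same, and your use of Section~\ref{S:apriori} for the a-priori estimates mirrors the paper's reliance on Lemmas~\ref{L:apriori} and~\ref{L:module}. Two points of execution differ and are worth noting.

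\emph{Convolution limit (your step (a)).} You invoke Vitali via uniform integrability of $Z^n$, whereas the paper exploits that after Skorokhod the convergence $Z^n\to Z$ in $D([0,T])$ gives $\sup_n\sup_{s\leq T}|Z^n_s(\omega)|<\infty$ almost surely, so dominated convergence applied pathwise to the split $\int(K^n-K)Z^n + \int K(Z^n-Z)$ suffices. This is slightly simpler and avoids any $L^p$ argument.

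\emph{Identification of characteristics (your step (b)).} Here the paper takes a genuinely shorter route: rather than passing to the limit in the It\^o/martingale-problem identity by hand, it observes that the triple of \emph{modified} characteristics $\bigl(b^nX^n,\,(c^n+\int\zeta^2\nu^n)X^n,\,\int h\,\nu^n\cdot X^n\bigr)$ is $C$-tight by~\ref{eq:stabilityi} and the tightness of $X^n$, and then applies \citet[Theorem~IX.2.4 and Remark~IX.2.21]{Jacod_2003} as a black box to conclude that the limit $Z$ is a semimartingale with characteristics $(bX,cX,\nu(d\zeta)X)$. This packages precisely the delicate joint limit $\nu^n\otimes dX^n\Rightarrow\nu\otimes dX$ you flag as the main obstacle. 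Your martingale-problem route would work, but carrying it out rigorously amounts to reproving a special case of that theorem; in particular the passage from ``local martingale along $n$'' to ``local martingale in the limit'' and the simultaneous limit in $\int_0^t g(Z^n_{s-})\,dX^n_s$ with $g$ depending on $\nu^n$ require care (uniform integrability localised by $\tau_m=\inf\{t:X_t>m\}$ plus the $\zeta^2$-tightness from~\ref{eq:stabilityi}). Either way the answer is the same; the paper's approach simply outsources the work.
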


\begin{proof}
	We refer to Section~\ref{S:stability}.
\end{proof}

\begin{remark}
	One notes that if $K$ satisfies \eqref{eq:K orthant}, then weak uniqueness holds thanks to Theorems~\ref{T:MainIntrouniqueness} and \ref{T:existenceRiccatiL1} so that one gets from Theorem~\ref{T:stability} that  $X^n\Rightarrow X$, where $X$ is the unique solution to \eqref{eq:generalsve}.
\end{remark}

 The following example illustrates an application of Theorem~\ref{T:stability} to the scaling limits of Hawkes processes mentioned in the introduction. We stress that the convergence of the second modified characteristic in Theorem~\ref{T:stability}-\ref{eq:stabilityi} allows to obtain continuous limiting semimartingales $Z$ from a sequence of jump semimartingales $Z^n$.  We recall that the resolvent of the second kind $R$ of $K$ is the unique $L^1_{\rm loc}$ function solution to 
 \begin{align}\label{eq:resolventeq}
 R(t) = K(t) +    \int_0^t K(t-s)R(s)ds = K(t) +    \int_0^t R(t-s) K(s)ds, \quad t \geq 0.
 \end{align}
 The resolvent $R$ exists, for any kernel $K \in L^1_{\rm loc}$, see \citet[Theorems 2.3.1  and 2.3.5]{gripenberg1990volterra}.
\begin{example}\label{E:Hawkesapprox}
	Fix $n\geq 1$ and a sequence of counting processes $(N^n)_{n \geq 1}$ with  respective intensities 
	\begin{align} 
	\lambda^n_t = g_0^n(t) + \int_0^t K^n(t-s)dN^n_s, \quad t  \geq 0, 
	\end{align}
	for some continuous function $g_0^n$ and kernel $K^n \in L^1_{\rm loc}$. Then, convolving both sides of the equation with  the resolvent of the second kind $R^n$ of $K^n$ and using \eqref{eq:resolventeq}, leads to 
	\begin{align} 
	\lambda^n_t = g_0^n(t) + \int_0^t R^n(t-s) g_0^n(s)ds + \int_0^t R^n(t-s)dM^n_s, \quad t  \geq 0, 
	\end{align}
	where $M^n=N^n-\int_0^{\cdot} \lambda^n_s ds$.  Let $X^n$ denote the rescaled sequence of integrated accelerated intensities $X^n= \frac 1 n \int_0^{\cdot} \lambda^n_{ns} ds$. Then, by an application of Lemma~\ref{L:spotvariance}, $X^n$ satisfies 
	\begin{align}
	X^n_t = G_0^n(t) + \int_0^t \tilde R^n(t-s)Z^n_s ds,
	\end{align}
	with $G^n_0(t)=\int_0^t \left(\frac 1 n g_0^n(ns) + \int_0^s \tilde R^n(s-u)g_0^n(nu)du \right)ds$, $\tilde R^n(t)=R^n(nt)$ and $Z^n_t=\frac 1 n M^n_{nt}$. Whence $Z^n$ is a pure jump martingale with jump sizes $\frac 1 n$ and integrated intensity $\int_0^{nt}\lambda^n_s ds = n^2 X^n_t $ so that its characteristics read $(0,0,n^2X^n \delta_{1/n}(d\zeta))$. Set $(b^n,c^n,\nu^n)=(0,0,n^2\delta_{1/n})$ and observe that 
	$$ \int_{\R} \zeta^2 \nu^n(d\zeta) = 1 \quad  \mbox{ and } \quad  \int_{\R} h(z)\nu^n(d\zeta)=h(1/n)n^2 \to 0,  \quad \mbox{as $n \to \infty$,} $$ 
	for any continuous and bounded function $h$ vanishing around 0. Whence, Theorem~\ref{T:stability}-\ref{eq:stabilityi} is satisfied with $(b,c,\nu)=(0,1,0)$. Provided that  $g_0^n$ and $K^n$  are chosen such that $(\tilde R^n)_{n\geq 1}$  and $(G_0^n)_{n\geq 1}$ satisfy  Theorem~\ref{T:stability}-\ref{eq:stabilityii}-\ref{eq:stabilityiv} for some $\tilde R$ and $G_0$, Theorem~\ref{T:stability}  yields that $(X^n)_{n\geq 1}$ is tight on $C([0,T],\R)$ such that any limit point satisfies 
	$$ X_t = G_0(t)+ \int_0^t \tilde R(t-s)Z_sds $$ 
	with $Z$ a continuous martingale with characteristics $(0,X,0)$. For instance, setting $g_0^n \equiv 0$,  \citet{Jusselin_2018} construct a sequence of kernels $K^n$ such that $\tilde R^n $ converges in $L^1([0,T],\R)$ towards the  function 
	$$ \tilde R_H(t)=\lambda t^{\alpha-1}E_{\alpha,\alpha}(-\lambda t^{\alpha}),$$
	for some $\lambda \in \R$, $\alpha=H+1/2$ with $H\in(-1/2,1/2)$ and $E_{\alpha,\alpha}(z)=\sum_{n\geq 0}z^n/\Gamma(\alpha(n+1))$ the so-called Mittag-Leffler function. In particular, $ \tilde R_H$  is the resolvent of the second kind of  $\lambda K_H$ where $K_H$ is the  fractional kernel given by \eqref{eq:kernelfrac}. In this case, using again the resolvent equation \eqref{eq:resolventeq}, one obtains 
	$$ X_t = \int_0^t \lambda K_H(t-s)\tilde Z_s ds,$$ with $\tilde Z= X + Z$.     	
\end{example}

\begin{remark}
	In the $L^2$ setting, i.e.~when the characteristics of $Z$ are absolutely continuous with respect to the Lebesgue measure, \citet[Theorem 1.6]{AJCPL:19} provides a generic stability result for stochastic Volterra equations with jumps using a `martingale problem' formulation, for general coefficients for the differential characteristics of $Z$ going beyond the affine case.   
\end{remark}

We now introduce the monotonicity and continuity  assumptions  needed on the kernel $K$ and the input function $G_0$ to construct non-decreasing and nonnegative solutions to \eqref{eq:generalsve}.   We assume that $K \in L^1_{\rm loc}(\R_+,\R)$ such that \eqref{eq:K orthant} holds.  Concerning the input function $G_0$, in the absence of jumps and for $K\in L^2_{\rm loc}$ \citet{AJEE:19markovian} provide  a set $\mathcal G_{K}$ of  admissible  input curves $g_0$ defined in terms of the resolvent of the first kind  $L$ to ensure the existence of non-negative solution for \eqref{eq:svel2}. To guarantee that the approximate solutions \eqref{eq:Xnintro} are non-decreasing,  we consider similarly to  \citet[Equations (2.4)--(2.5)]{AJEE:19markovian}, the following condition\footnote{{Under  \eqref{eq:K orthant} one can show that   $\Delta_hK*L$  is right-continuous and of locally bounded variation (see \citet[Remark B.3]{AJEE:19markovian}), thus the associated measure $d(\Delta_hK*L)$ is well defined.}}
\begin{align}\label{eq:notsofriendlycondition}
\Delta_h g_0 - (\Delta_hK * L)(0) g_0 - d(\Delta_hK * L) * g_0 \geq 0, \quad  h \geq 0,
\end{align} 
where we used the notation $(f*\mu)(t)=\int_0^t f(t-s)\mu(ds)$ for a measure of locally bounded variation $\mu$ and a function $f \in L^1_{\rm loc}$.   We then define  the space of admissible input curves $ \mathcal G_{K}$ to be
\begin{align}\label{eq:notsofriendlyset}
\mathcal G_{K} = \left\{ g_0 \mbox{ continuous,  satisfying } \eqref{eq:notsofriendlycondition} \mbox{ such that } g_0(0)\geq 0 \right\}.
\end{align} 
Two notable examples of such admissible input curves are:
\begin{example}\label{E:g0}
\begin{enumerate}
	\item \label{E:g0i}
	$g_0 \mbox{ continuous and non-decreasing with }  g_0(0) \geq 0,$
	\item \label{E:g0ii}
	$g_0(t) = x_0 +  \int_0^t K(t-s)\theta(s)  ds,$ for  some $x_0\geq 0$ and $\theta:\RR_+ \to \RR_+$ locally bounded,
\end{enumerate}
see e.g.~\citet[Example (2.2)]{AJEE:19markovian}.	
\end{example}

We are now in place to state the main existence (and uniqueness) result. 
\begin{theorem}\label{T:MainIntro}
	 Fix $b\in \R,c\geq 0$ and $\nu$ a nonnegative measure supported on $\R_+$ such that $\int_{\R_+}\zeta^2 \nu(d\zeta)<\infty$.
	Fix a  kernel $K \in L^1_{\rm loc}(\R_+,\R)$. Assume that  $K$ and its   shifted kernels $\Delta_{1/n} K$ satisfy  \eqref{eq:K orthant}, for any $n\geq 1$. Let $G_0=\lim_{n\to \infty}\int_0^{\cdot} g^{n}_0(s)ds$ for some functions $g^n_0 \in \mathcal{G}_{\Delta_{1/n} K}$, $n\geq 1$, and assume that $G_0$ is continuous. Then, there exists   a {unique} non-decreasing nonnegative continuous weak solution $X$ to \eqref{eq:generalsve} for the input $(G_0,K,b,c,\nu)$. \end{theorem}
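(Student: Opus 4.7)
My plan is to combine the stability result of Theorem~\ref{T:stability} with the existing $L^2$-theory for stochastic Volterra equations to obtain existence, and to read off uniqueness directly from Theorems~\ref{T:MainIntrouniqueness} and~\ref{T:existenceRiccatiL1}. The natural approximating inputs are the shifted kernels $K^n:=\Delta_{1/n}K$ paired with the given drifts $g_0^n$. Because $K$ is nonnegative, non-increasing, and continuously differentiable on $(0,\infty)$, the shift $K^n$ is bounded on $[0,\infty)$ by $K(1/n)<\infty$, hence lies in $L^2_{\rm loc}(\R_+,\R)$; by assumption $K^n$ still satisfies \eqref{eq:K orthant}, and $g_0^n\in\mathcal{G}_{K^n}$ is an admissible input curve for $K^n$.

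With these $L^2$-inputs in hand, for each $n$ I would invoke the existence theory for affine stochastic Volterra equations in the $L^2$-setting with jumps (as developed in \citet{Abi_Jaber_2017,AJEE:19markovian,cuchiero2020generalized}) to produce a nonnegative weak solution $Y^n$ of \eqref{eq:svel2} with data $(g_0^n,K^n)$ and differential characteristics $(bY^n,cY^n,\nu(d\zeta)Y^n)$ for $Z^n$. Setting $X^n:=\int_0^{\cdot}Y^n_s\,ds$ and $G_0^n:=\int_0^{\cdot}g_0^n(s)\,ds$, Lemma~\ref{L:spotvariance} promotes $X^n$ to a continuous non-decreasing weak solution of \eqref{eq:generalsve} for the input $(G_0^n,K^n,b,c,\nu)$. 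Next I would verify the three conditions of Theorem~\ref{T:stability}: condition~\ref{eq:stabilityi} is immediate since $(b^n,c^n,\nu^n)\equiv(b,c,\nu)$ does not depend on $n$; condition~\ref{eq:stabilityii} reduces to $\int_0^T|K(s+1/n)-K(s)|\,ds\to 0$, which is the standard $L^1$-continuity of translation for $K\in L^1_{\rm loc}$; and condition~\ref{eq:stabilityiv} is the assumed local-uniform convergence $G_0^n\to G_0$. Theorem~\ref{T:stability} then delivers tightness of $(X^n)_{n\geq 1}$ in $C([0,T],\R)$ together with a limit point $X$ that is a continuous non-decreasing weak solution of \eqref{eq:generalsve} for $(G_0,K,b,c,\nu)$.

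For uniqueness, given any continuous $f_0:[0,T]\to i\R$ and taking $f_1\equiv f_2\equiv 0$, condition~\eqref{eq:coeffaf} of Theorem~\ref{T:existenceRiccatiL1} is trivially met, producing a continuous solution $\psi$ of \eqref{eq:RiccatiL2a}--\eqref{eq:RiccatiL2b} with $\Re(\psi)\leq 0$, which in turn yields \eqref{eq:condmomentpsi} via Remark~\ref{R:riccaticond}. The second part of Theorem~\ref{T:MainIntrouniqueness} then gives weak uniqueness, and in particular identifies the limit point above as \emph{the} unique weak solution. The step I expect to be most delicate is the construction of $Y^n$: one must verify that the $L^2$-jump theory indeed yields a \emph{nonnegative} solution under the admissibility condition $g_0^n\in\mathcal{G}_{K^n}$, since non-negativity of $Y^n$ is exactly what makes $X^n$ non-decreasing and thereby allows Lemma~\ref{L:spotvariance} to bridge the $L^2$-formulation and \eqref{eq:generalsve}.
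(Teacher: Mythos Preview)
Your overall architecture matches the paper exactly: approximate $K$ by $K^n=\Delta_{1/n}K$, build approximate solutions $X^n$ in the $L^2$-regime, pass to the limit via Theorem~\ref{T:stability}, and read off uniqueness from Theorems~\ref{T:MainIntrouniqueness} and~\ref{T:existenceRiccatiL1}. Two points, however, need more care.

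\textbf{Construction of $X^n$ with jumps.} You keep $(b^n,c^n,\nu^n)\equiv(b,c,\nu)$ and plan to cite the $L^2$-jump theory of \citet{cuchiero2020generalized}. The paper explicitly notes that this reference uses \emph{different} admissibility conditions on the input curve and \emph{different} kernel hypotheses, so it cannot be invoked for $g_0^n\in\mathcal G_{K^n}$ as defined here; this is exactly the delicacy you flagged. The paper's resolution is to perform a \emph{second} approximation: truncate the jump measure to $\nu^n(d\zeta)=\bm 1_{\{\zeta\ge 1/n\}}\nu(d\zeta)$, which is finite. For the pair $(K^n,\nu^n)$ with $K^n\in C^1[0,\infty)$ and $\nu^n$ finite, the paper constructs a nonnegative solution $Y^n$ to \eqref{eq:svel2} by hand (Lemma~\ref{T:L2existence}): between consecutive jump times one solves the purely continuous equation~\eqref{eq:Ytau} via Lemma~\ref{L:existencesvecontinuous}, and at each jump time $\tau_i$ one restarts with the updated input $\tilde g^{i+1}=\tilde g^i_{\tau_{i+1}}+K(\cdot-\tau_{i+1})J_{i+1}$, which stays in $\mathcal G_K^{\tau_{i+1}}$ by Lemma~\ref{L:Gtau}. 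Condition~\ref{eq:stabilityi} of Theorem~\ref{T:stability} is then checked for the varying sequence $\nu^n\to\nu$ rather than for the constant sequence you propose.

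\textbf{Uniform convergence of $G_0^n$.} The hypothesis of Theorem~\ref{T:MainIntro} only gives the \emph{pointwise} limit $G_0=\lim_n\int_0^{\cdot}g_0^n(s)\,ds$ together with continuity of $G_0$; locally uniform convergence is not assumed. The paper obtains~\ref{eq:stabilityiv} by first noting that \eqref{eq:notsofriendlycondition} at $t=0$ forces $g_0^n\ge 0$, so each $G_0^n$ is non-decreasing, and then applying Dini's second theorem to upgrade pointwise convergence of monotone functions with continuous limit to local uniform convergence.
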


\begin{proof} 
 		The proof for the existence part is given in~Section \ref{S:existencestochastic}.  The uniqueness statement is obtained from  Theorems~\ref{T:MainIntrouniqueness} and \ref{T:existenceRiccatiL1}, recall Remark~\ref{R:riccaticond}.
 	\end{proof}

\begin{remark}\label{R:assumptiong0}
	If    $K$ satisfies \eqref{eq:K orthant} and  $g_0 \in \mathcal G_K$ as in Example~\ref{E:g0}, the continuous function $G_0(t)=\int_0^t g_0(s)ds$ satisfies the assumption of Theorem~\ref{T:MainIntro}. Indeed:
	\begin{enumerate}
		\item
		Take $g_0$ as in Example~\ref{E:g0}-\ref{E:g0i}, fix  $n\geq1$ and set $g_0^n=\Delta_{1/n}g_0$. Clearly, $g_0^n$ satisfies again Example~\ref{E:g0}-\ref{E:g0i}, so that $g_0^n \in \mathcal G_{\Delta_{1/n}K}$.    Furthermore, we have that $\lim_{n\to \infty}\int_0^{t}g_0^n(s)ds=G_0(t)$ by virtue of  \citet[Lemma 4.3]{brezis2010functional}.
		\item
		Take $g_0$ as in Example~\ref{E:g0}-\ref{E:g0ii} and set $g_0^n(t)=x_0 + \int_0^t \Delta_{1/n}K(t-s)\theta(s)ds$, then, similarly we have that $\lim_{n\to \infty}\int_0^{t}g_0^n(s)ds=G_0(t)$ and $g_0^n\in \mathcal G_{\Delta_{1/n}K}$.
	\end{enumerate}
	\end{remark}

\section{A-priori estimates}\label{S:apriori}
We first provide a-priori estimates for solutions to \eqref{eq:generalsve}. We make use of the resolvent of the second kind $R$ of $K$ given in \eqref{eq:resolventeq}.

\begin{lemma}\label{L:apriori} Fix $K\in L^1_{\rm loc}(\R_+,\R)$ and $G_0$ locally bounded. Assume that there exists a  non-decreasing nonnegative continuous and adapted process $\tilde X$  satisfying 
	\begin{align*}
	\tilde X_t = G_0(t)+ \int_0^t K(t-s)\tilde Z_s ds,
	\end{align*}
	where $\tilde Z$ is a semimartingale with characteristics $(\tilde B_t(\omega),\tilde C_t(\omega),\tilde\nu_t(\omega,d\zeta))$ such that
	\begin{align}\label{eq:condgrowthuni}
	| \tilde B_t(\omega) | + |\tilde C_t(\omega) | + \int_{[0,t]\times\R}\zeta^2  \tilde\nu_s(\omega,d\zeta)  \le \kappa_L|\tilde X_t(\omega)|,  \quad t\geq 0, \quad \mbox{for a.e. } \omega,
	\end{align}
	for some constant $\kappa_L$.
	Then,  for all $T >0$,
   \begin{align}\label{eq:apriori1}
	\EE\left[ \sup_{t \leq T}  | \tilde X_t|^2 \right] \leq C(T,\kappa_L) \left(1+\sup_{t \leq T} |G_0(t)|^2+\|K\|_{L^1([0,T])}^2  \right)\left( 1 + \|R\|_{L^1([0,T])} \right),
	\end{align}
	where $C(T,\kappa_L)>0$ depends exclusively on $(T,\kappa_L)$,  and $R$ is the resolvent of the second kind of $C(T,\kappa_L)  \|K\|_{L^1([0,T])} |K|$.
\end{lemma}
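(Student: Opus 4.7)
The plan is to derive a scalar Volterra inequality for the localized second moment $f_n(t) := \EE[|\tilde X_{t\wedge\tau_n}|^2]$, invert it via the resolvent of the second kind, and then pass to the limit. Two structural facts are central: $\tilde X$ non-decreasing gives $\sup_{t\leq T}|\tilde X_t|^2 = |\tilde X_T|^2$, and the characteristics bound \eqref{eq:condgrowthuni} controls $|\tilde Z_s|^2$ in terms of $|\tilde X_s|^2$.

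Concretely, I would split $\tilde Z = \tilde B + \tilde M$ into its finite-variation part and a local martingale; assumption \eqref{eq:condgrowthuni} yields the pointwise bounds $|\tilde B_t|^2 \leq \kappa_L^2 |\tilde X_t|^2$ and $\langle \tilde M\rangle_t = \tilde C_t + \int_{[0,t]\times\R}\zeta^2 \tilde\nu_s(d\zeta) \leq \kappa_L \tilde X_t$. Set $\tau_n := \inf\{t\geq 0 : \tilde X_t \geq n\}$, which increases to $\infty$ a.s.\ by continuity of $\tilde X$; then $\langle \tilde M\rangle_{\cdot\wedge\tau_n} \leq \kappa_L n$, so $\tilde M^{\tau_n} \in \mathcal H^2$ and $f_n(t) \leq n^2$. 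Combining It\^o's isometry with $\EE[\tilde X_{s\wedge\tau_n}] \leq 1 + f_n(s)$ gives $\EE[|\tilde Z_{s\wedge\tau_n}|^2] \leq \kappa'(\kappa_L)\,(1 + f_n(s))$. Squaring the Volterra equation and using Cauchy--Schwarz on the convolution produces the pathwise bound
\[|\tilde X_t|^2 \leq 2|G_0(t)|^2 + 2\|K\|_{L^1([0,T])}\int_0^t |K(t-s)|\,|\tilde Z_s|^2 \,ds;\]
the decisive step is to transfer this to the stopped process, obtaining the $n$-uniform Volterra inequality
\[f_n(t) \leq A + B\int_0^t |K(t-s)|\,f_n(s)\, ds, \qquad t\leq T,\]
with $A = 2\sup_{s\leq T}|G_0(s)|^2 + C_1(\kappa_L)\|K\|_{L^1([0,T])}^2$ and $B = C_2(\kappa_L)\|K\|_{L^1([0,T])}$. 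The generalized Gronwall lemma for convolution inequalities (see \citet[Chapter 9]{gripenberg1990volterra}) then gives $f_n(T) \leq A\,(1 + \|R\|_{L^1([0,T])})$, where $R$ is the resolvent of the second kind of $B|K|$; by monotonicity of $\tilde X$ and monotone convergence, $f_n(T) \uparrow \EE[|\tilde X_T|^2] = \EE[\sup_{t\leq T}|\tilde X_t|^2]$, which inherits the stated bound.

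The main obstacle is justifying the $n$-uniform Volterra inequality: naively substituting $\tilde X_{t\wedge\tau_n}^2 \leq |\tilde X_t|^2$ into the Cauchy--Schwarz bound introduces $\EE[|\tilde Z_s|^2]$ on the right-hand side, which need not be finite. The remedy is to work with the stopped equation $\tilde X_{t\wedge\tau_n} = G_0(t\wedge\tau_n) + \int_0^{t\wedge\tau_n}K(t\wedge\tau_n - s)\,\tilde Z_{s\wedge\tau_n}\, ds$ and exploit both the identity $\tilde Z_s = \tilde Z_{s\wedge\tau_n}$ on $\{s \leq \tau_n\}$ and the monotonicity of $\tilde X$ (which yields $f_n(s) \leq f_n(t)$ for $s\leq t$) so as to recover the Volterra structure with the deterministic kernel $K(t-s)$ rather than the random $K(t\wedge\tau_n-s)$.
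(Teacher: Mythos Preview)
Your strategy is essentially the paper's: localize via $\tau_n$, square the convolution using Jensen/Cauchy--Schwarz against $|K|\,ds/\|K\|_{L^1}$, control $\E[|\tilde Z_s|^2\bm 1_{s<\tau_n}]$ through the characteristics bound, apply the convolution Gronwall inequality, and let $n\to\infty$. The only point worth sharpening is your handling of the random kernel: rather than working with the stopped process $\tilde X_{t\wedge\tau_n}$ and the equation at time $t\wedge\tau_n$ (which indeed carries the random argument $K(t\wedge\tau_n-s)$), it is cleaner to set $\tilde X^n_t := \tilde X_t\bm 1_{\{t\le\tau_n\}}$ and observe that on $\{t<\tau_n\}$ one has $\tilde Z_s=\tilde Z_s\bm 1_{\{s<\tau_n\}}$ for all $s\le t$, so that
\[
|\tilde X^n_t| \le |G_0(t)| + \Big|\int_0^t K(t-s)\,\tilde Z_s\bm 1_{\{s<\tau_n\}}\,ds\Big|
\]
holds with the \emph{deterministic} kernel $K(t-s)$ throughout; this is precisely the identity you flag, and it makes the monotonicity of $f_n$ unnecessary for this step. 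The limit is then taken by Fatou rather than monotone convergence, since $\tilde X_t\bm 1_{\{t\le\tau_n\}}\to\tilde X_t$ a.s.
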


\begin{proof}
	Since $\tilde X$ is non-decreasing, we have $\E\left[\sup_{t\leq T} \tilde X^2_t\right] \leq \E[\tilde X_T^2]$. It is therefore enough to prove the  bound \eqref{eq:apriori1} for  $\E[\tilde X_T^2]$. For this, fix $n \geq 1$  and define $\tau_n=\inf\{t \geq 0: |\tilde X_t| \geq n \} \wedge T$. Since $\tilde X$ is adapted with continuous sample paths, $\tau_n$ is a stopping time such that $\tau_n \to T$ almost surely as $n\to \infty$. First observe that 
	\begin{align*}
	|\tilde X_t|\mathbf 1_{\{ t < \tau_n \}}\leq |G_0(t)|\mathbf + \left| \int_0^t K(t-s) \tilde Z_s \mathbf 1_{\{ s < \tau_n \}} ds \right|.
	\end{align*}
	and set $\tilde X^n_t = \tilde X_t\mathbf 1_{\{ t \leq \tau_n \}}$ . An  applications of Jensen's inequality on the normalized measure $|K(t)|dt/\|K\|_{L^1([0,T])}$ yields
	\begin{align*}
	|\tilde X^n_t|^2 &\leq  2\left| G_0(t)\right|^2 + 2 \left|\int_0^t K(t-s) \tilde Z_s \mathbf 1_{\{ s < \tau_n \}} ds\right|^2   \\
	&\leq  2 \sup_{r \leq T} \left| G_0(r)\right|^2 +  2\|K\|_{L^1([0,T])}\int_0^t |K(t-s)| \left|\tilde Z_s\mathbf 1_{\{ s < \tau_n \}}\right|^2  ds.
	\end{align*}
 $\tilde Z$ admits the decomposition $\tilde Z=\tilde B + \tilde M^c + \tilde M^d$ such that $\E[|\tilde M^c_t|^2]=\E[\tilde C_t]$ and $\E[|\tilde M^d_t|^2]=\E[\int_{[0,t]\times \R} \zeta^2 \tilde \nu_s(\cdot,d\zeta)]$, so that Jensen's inequality combined with the bound \eqref{eq:condgrowthuni} yield 
	\begin{align}\label{eq:Znbound} 
	\E[\tilde Z^2_s \mathbf 1_{\{ s < \tau_n \}}] \leq 3(\kappa_L^2 + \kappa_L)(1+\E[|\tilde X^n_s|^2]), \quad s\geq 0. 
	\end{align}
	Combining the above, we get for a constant $C$ depending exclusively on $(T,\kappa_L)$ that may vary from line to line: 
	\begin{align*}
	\EE\left[|\tilde X^n_t|^2\right] 
	&\leq  C\sup_{r \leq T}\left| G_0(r)\right|^2 + C  \|K\|_{L^1([0,T])}\int_0^t |K(t-s)| \left(1+\EE\left[ |\tilde X^n_s|^2 \right] \right)  ds\\
	&\leq C \left ( 1+ \sup_{r\leq T}\left| G_0(r)\right|^2 + \|K\|_{L^1([0,T])}^{2} \right) \left( 1+  \|R\|_{L^1([0,T])}\right).
	\end{align*}
	where the last line follows from  the generalised Gronwall inequality for convolution equations with $R$  the resolvent of $C\|K\|_{L^1([0,T])}^{p-1} |K|$, see \citet[Theorem 9.8.2]{gripenberg1990volterra}. The claimed estimate \eqref{eq:apriori1} now follows by sending $n\to \infty$ and using Fatou's Lemma in the above. 
\end{proof}

\begin{remark}\label{R:momentbound} If $Z$ is a semimartingale with characteristics \eqref{eq:charZ}, the growth condition \eqref{eq:condgrowthuni} is clearly satisfied with 
	$\kappa_L=\left ( b + c + \int_{\R_+} \zeta^2 \nu(d\zeta) \right),$
	recall the assumption $\int_{\R_+} \zeta^2 \nu(d\zeta)<\infty$. Whence, any solution to \eqref{eq:generalsve} satisfies the estimate \eqref{eq:apriori1}. 
\end{remark}

The following lemma establishes an estimate for the modulus of continuity of the process $\bar X= X-G_0$ defined by 
\begin{align}\label{eq:module}
w_{\bar X,T}(\delta)=\sup\{ |\bar X_s -\bar X_t|: s,t\leq T \mbox{ and } |s-t|\leq \delta \}, \quad 0<\delta \leq 1.
\end{align}  

\begin{lemma}\label{L:module}
Fix $K \in L^1_{\rm loc}(\R_+,\R) $ non-increasing and $G_0$ a locally bounded function.  Let $X$ denote a solution to \eqref{eq:generalsve} for the input $(G_0,K,b,c,\nu)$ and set $\bar X = X-G_0$. Then, for any $T>0$ and $\delta\leq 1$,
 \begin{align*}
 \E\left[ w_{\bar X,T}(\delta) \right] \leq  3(\kappa_L^2 + \kappa_L)\left(1+\E\big[X_T^2\big]\right) \left(\int_0^{\delta} |K(s)|ds + \int_0^T  (K(s)-K(s+\delta))ds  \right) 
 \end{align*}
 with $\kappa_L = \left ( b + c + \int_{\R_+} \zeta^2 \nu(d\zeta) \right)$.
\end{lemma}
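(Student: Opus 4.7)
My plan is to start from the representation $\bar X_t=\int_0^t K(t-u)\,Z_u\,du$ and, for $0\le s\le t\le T$ with $h:=t-s\le\delta$, perform the substitutions $v=t-u$, $v=s-u$ together with the split $[0,t]=[0,h]\cup[h,t]$ to obtain the clean decomposition
\[
\bar X_t-\bar X_s=\int_0^h K(v)\,Z_{t-v}\,dv+\int_0^s\bigl(K(v+h)-K(v)\bigr)Z_{s-v}\,dv.
\]
Taking absolute values and using the monotonicity hypothesis $K(v+h)\le K(v)$ gives
\[
|\bar X_t-\bar X_s|\le \sup_{u\le T}|Z_u|\cdot\biggl(\int_0^h |K(v)|\,dv+\int_0^s\bigl(K(v)-K(v+h)\bigr)\,dv\biggr).
\]
Both kernel integrals on the right are non-decreasing in $h$: the first trivially, and the second because $K(v+h)\ge K(v+\delta)$ for $h\le\delta$, so the integrand is pointwise bounded by $K(v)-K(v+\delta)$, while the integration domain satisfies $s\le T$. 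Taking the supremum over pairs $(s,t)$ with $|s-t|\le\delta$ therefore yields
\[
w_{\bar X,T}(\delta)\le\sup_{u\le T}|Z_u|\cdot\biggl(\int_0^\delta|K(v)|\,dv+\int_0^T\bigl(K(v)-K(v+\delta)\bigr)\,dv\biggr).
\]

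It then remains to bound $\E[\sup_{u\le T}|Z_u|]$. Using the decomposition \eqref{eq:decompZ}, $Z=bX+M^c+M^d$, with $X$ non-decreasing (so $\sup_u X_u=X_T$) and $\langle M^c\rangle=cX$, $\langle M^d\rangle=\int_{[0,\cdot]\times\R_+}\zeta^2\nu(d\zeta)\,dX_s$. Doob's $L^2$-inequality gives $\E[\sup_u(M^c_u)^2]\le 4c\,\E[X_T]$ and $\E[\sup_u(M^d_u)^2]\le 4\bigl(\int\zeta^2\nu(d\zeta)\bigr)\E[X_T]$, and a Cauchy--Schwarz application together with the elementary estimates $\E[X_T]\le\tfrac12(1+\E[X_T^2])$ and $\sqrt{y}\le 1+y$ absorb the square roots arising from Doob into a bound of the form $\E[\sup_{u\le T}|Z_u|]\le 3(\kappa_L^2+\kappa_L)(1+\E[X_T^2])$, exactly along the lines of the pointwise estimate \eqref{eq:Znbound} used in the proof of Lemma~\ref{L:apriori}. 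Multiplying by the deterministic kernel factor yields the claimed bound.

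The main obstacle is the second contribution $\int_0^s\bigl(K(v)-K(v+h)\bigr)dv$: one needs a bound that is uniform in $h\le\delta$ \emph{and} vanishes as $\delta\downarrow 0$. This is exactly where the monotonicity of $K$ enters in a nontrivial way—it lets one replace $|K(v+h)-K(v)|$ with the sign-definite quantity $K(v)-K(v+h)$, which is itself monotone in $h$, so that the bound by $\int_0^T\bigl(K(v)-K(v+\delta)\bigr)\,dv$ (which will tend to $0$ as $\delta\downarrow 0$ by dominated convergence, provided $K\in L^1_{\rm loc}$) is immediate. The remaining control of $\E[\sup_{u\le T}|Z_u|]$ via Doob and Cauchy--Schwarz is routine bookkeeping of constants analogous to the one already performed for \eqref{eq:Znbound}.
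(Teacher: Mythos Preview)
Your proof is correct and follows essentially the same approach as the paper: the same split of $\bar X_t-\bar X_s$ into a ``small interval'' piece and a ``shift'' piece, the same use of the monotonicity of $K$ to replace $|K(v+h)-K(v)|$ by $K(v)-K(v+\delta)$, and the same reduction to a bound on $\E[\sup_{u\le T}|Z_u|]$ via the martingale decomposition of $Z$ (the paper phrases this last step as an application of Burkholder--Davis--Gundy rather than Doob, but the bookkeeping is equivalent).
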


\begin{proof}
	Fix $s,t \leq T$ such that $|t-s|\leq \delta$. We first write
	\begin{align*}
	\bar X_s - \bar X_t = \int_{s\wedge t}^{s\vee t} K(s\vee t-u)Z_u du +  \int_0^{s\wedge t} (K(s\vee t-u)-K(s\wedge t-u))Z_u du. 
	\end{align*} 
	Whence, 
	\begin{align*}
|\bar X_s - \bar X_t| &\leq  \sup_{u\leq T}|Z_u| \left( \int_{s\wedge t}^{s\vee t} |K(s\vee t-u)|du+ \int_0^{s\wedge t} |K(s\vee t-u)-K(s\wedge t-u)| du   \right)\\
&\leq \sup_{u\leq T}|Z_u| \left(\int_0^{\delta}|K(u)|du + \int_0^{s\wedge t} |K(s\vee t -s\wedge t +u)-K(u)|du\right) \\
&\leq \sup_{u\leq T}|Z_u| \left(\int_0^{\delta}|K(u)|du + \int_0^{T} \left(K(u)-K(u+\delta)\right)du\right)
 \end{align*} 
	where the last inequality follows from the fact that $K$ is non-increasing and $(s\vee t  -s \wedge t) \leq \delta$.  The claimed estimate follows upon taking the supremum over $s,t$ and the expectation, using the Burkholder-Davis-Gundy inequality for the local martingale parts of $Z$ and Remark~\ref{R:momentbound}. 
\end{proof}

\section{Tightness and stability}\label{S:stability}
In this section, we prove our general tightness and stability result:  Theorem~\ref{T:stability}.   One can appreciate the formulation \eqref{eq:generalsve} and the affine structure of the characteristics \eqref{eq:charZ} for the stability argument.  


We start with a preliminary lemma. 

\begin{lemma}\label{L:lemmaKn}
		Let  $(K^n)_{n \geq 1}$ be a   sequence of locally integrable kernels $K^n$ such that  $   \int_0^t| K^n(s)-K(s)|ds \to 0$, $\mbox{as $n\to \infty$}$, $t \geq 0$, for some kernel $K \in L^1_{\rm loc}(\R_+,\R)$. Then, for all $T>0$,
		\begin{align}\label{eq:limspuKn}
		\lim_{\delta \to 0} \limsup_{n \to \infty}\left( \int_0^{\delta} |K^n(s)| ds + \int_0^T |K^n(s+\delta)-K^n(s)|ds \right)=0.
		\end{align}
\end{lemma}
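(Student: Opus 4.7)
The plan is to reduce both terms in \eqref{eq:limspuKn} to the limiting kernel $K$ via the triangle inequality, then invoke two standard facts about $L^1$ functions: absolute continuity of the Lebesgue integral (for the first term) and continuity of translation in $L^1$ (for the second term).

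For the first term, I would write
\begin{align*}
\int_0^{\delta}|K^n(s)|ds \leq \int_0^{\delta}|K(s)|ds + \int_0^{\delta}|K^n(s)-K(s)|ds \leq \int_0^{\delta}|K(s)|ds + \int_0^{T}|K^n(s)-K(s)|ds.
\end{align*}
Sending $n\to\infty$, the second summand vanishes by hypothesis, so $\limsup_n \int_0^{\delta}|K^n(s)|ds \leq \int_0^{\delta}|K(s)|ds$, which tends to $0$ as $\delta\to 0$ since $K\in L^1([0,T])$.

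For the second term (restricting to $\delta\leq 1$), I would insert $K(s+\delta)$ and $K(s)$ to obtain three pieces:
\begin{align*}
\int_0^T|K^n(s+\delta)-K^n(s)|ds &\leq \int_0^T|K^n(s+\delta)-K(s+\delta)|ds \\
&\quad + \int_0^T|K(s+\delta)-K(s)|ds + \int_0^T|K(s)-K^n(s)|ds.
\end{align*}
A change of variables bounds the first piece by $\int_0^{T+1}|K^n(u)-K(u)|du$, which tends to $0$ as $n\to\infty$ by hypothesis, and so does the third piece. The middle piece depends only on $K$ and $\delta$, so after taking $\limsup_{n\to\infty}$ we are left with $\int_0^T |K(s+\delta)-K(s)|ds$.

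The remaining step, and the only nontrivial point, is the classical continuity of translation in $L^1$: for any $f\in L^1([0,T+1])$, $\int_0^T|f(s+\delta)-f(s)|ds\to 0$ as $\delta\to 0$. I would apply this with $f=K\mathbf 1_{[0,T+1]}$, which concludes the proof. This translation-continuity fact is the main workhorse here; it is typically proved by approximating $K$ by a continuous compactly supported function and using uniform continuity, but it can be invoked as a standard result.
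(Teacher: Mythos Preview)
Your proposal is correct and follows essentially the same approach as the paper: both reduce the two terms to the limiting kernel $K$ via the triangle inequality and then invoke absolute continuity of the integral together with continuity of translation in $L^1$ (the paper cites \citet[Lemma 4.3]{brezis2010functional} for precisely this fact). The only difference is cosmetic: the paper writes out an explicit $\varepsilon/5$ argument, whereas you take $\limsup_{n\to\infty}$ directly.
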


\begin{proof}
	Fix $\varepsilon>0$ and $T>0$. Since   $K \in L^1_{\rm loc}(\R_+,\R)$, it is $L^1$--continuous, see \citet[Lemma 4.3]{brezis2010functional}, so that we can fix $\delta<1$ such that 
	\begin{align*}
	\int_0^{\delta} |K(s)| ds + \int_0^T |K(s+\delta)-K(s)|ds  \leq \frac{\varepsilon} 5. 
	\end{align*}
	Due to the $L^1$-convergence of the kernels $(K^n)_{n\geq 1}$, let $n_{\delta}$ be such that 
	\begin{align*}
	 \int_0^{T+1} |K^n(s)-K(s)| ds  \leq \frac{\varepsilon} 5, \quad n \geq n_{\delta}. 
	\end{align*}
Fixing $n\geq n_{\delta}$ and using the above leads to 
\begin{align}
 \int_0^{\delta} |K^n(s)| ds \leq \int_0^{T+1} |K^n(s)-K(s)|ds +  \int_0^{\delta}  |K(s)|ds \leq \frac{2}5 \varepsilon
\end{align}
and 
\begin{align}
\int_0^{T} |K^n(s+\delta)-K^n(s)| ds &\leq 2\int_0^{T+1} |K^n(s)-K(s)| ds \\
&\quad  \quad + \int_0^{T} |K(s+\delta)-K(s)| ds \\
&\leq \frac{3}5 \varepsilon.
\end{align}
Whence
$$  \int_0^{\delta} |K^n(s)| ds +  \int_0^{T} |K^n(s+\delta)-K^n(s)| ds  \leq \varepsilon,$$
which yields \eqref{eq:limspuKn}. 
\end{proof}

\begin{proof}[Proof of Theorem~\ref{T:stability}]
	Let  $(X^n)_{n \geq 1}$ be a   sequence of continuous non-decreasing solutions   to \eqref{eq:generalsve}, for the respective inputs $(G_0^n,K^n,b^n,c^n,\nu^n)$, that is, for each $n\geq 1$,
	\begin{align}\label{eq:X^n_}
	X^n_t = G^n_0(t) + \int_0^t K^n(t-s)Z^n_s ds, \quad t\geq 0,
	\end{align}
	where $Z^n$ is a semimartingale with characteristics $\left( b^n X^n,c^nX^n,\nu^n(d\zeta)X^n \right)$, defined on some filtered probability space $(\Omega^n,\mathcal F^n ,(\mathcal F^n_t)_{t\geq 0} ,\P^n)$.
	
$\bullet$ Fix $T>0$. We  argue \textit{tightness} of $(X^n,Z^n)$ on the space $C([0,T],\R) \times D([0,T],\R)$, where $D([0,T],\R)$ is the Skorokhod space endowed with the $J_1$ topology. To prove tightness of $(X^n)_{n\geq 1}$, we start by observing that due to the  uniform convergence of $(G_0^n)_{n \geq 1}$ in \ref{eq:stabilityiv}, it suffices to obtain the tighthness of the sequence  $\bar X^n= X^n-G_0^n$. To this end, we make use of the probabilistic counterpart of the Arzéla-Ascoli theorem given in \citet[Theorem 7.2]{Billingsley_1999}.    By  Chebyshev's inequality, it suffices to prove that 
\begin{align}\label{eq:tightness}
\sup_{n \geq 1} \E[\sup_{t\leq T}\bar X_t^n] < \infty  \quad \mbox{and} \quad \lim_{\delta \to 0} \limsup_{n\geq \infty} \E[w_{\bar X^n,T}(\delta)]= 0 
\end{align}
where $w$ is the modulus of continuity defined as in \eqref{eq:module}.
   By virtue of the continuous dependence of the resolvent on the kernel in $L^1$,   the $L^1$--convergence of $K^n$ in \ref{eq:stabilityii} implies the $L^1$--convergence of the respective sequence of resolvents $(R^n)_{n \geq 1}$, see \citet[Theorem 2.3.1]{gripenberg1990volterra}. Thus, the sequences $(\|K^n\|_{L^1([0,T])})_{n \geq 1}$ and  $(\|R^n\|_{L^1([0,T])})_{n \geq 1}$ are uniformly bounded in $n$.  Furthermore, it follows from \ref{eq:stabilityi} that $(b^n,c^n,\int_{\R}\zeta^2 \nu^n(d\zeta))_{n\geq 1}$ are uniformly bounded in $n$  so that the coefficient $\kappa_L$ appearing in \eqref{eq:condgrowthuni} for $X^n$, recall Remark~\ref{R:momentbound}, does not depend on $n$. Therefore, recalling \ref{eq:stabilityiv}, $\sup_{t\leq T} G_0^n(t)$ is uniformly bounded in $n$ and  the
bound in \eqref{eq:apriori1} for each $X^n$ does not depend on $n$, yielding   $ \sup_{n \geq 1} \E\left [ \sup_{t\leq T} |\bar X^n_t|^2\right]<\infty$. From there, an application of Lemmas~\ref{L:module} and \ref{L:lemmaKn} lead to \eqref{eq:tightness} and the tightness of $(X^n)_{n\geq 1}$ on $C([0,T],\R)$ follows. We claim that the sequence $(Z^n)_{n\in\N}$ is tight on $D([0,T],\R)$. To prove this we verify the conditions in  \citet[Theorem~VI.4.18]{Jacod_2003}. We first note that for any $a>0$, $\varepsilon>0$,  $N\leq T$, we have
\begin{align*}
\P\left( \int_{[0,N]\times\R} \bm1_{|\zeta|>a} dX^n_s\nu^n(d\zeta)  > \varepsilon \right) &\le \frac{1}{a^2\varepsilon} \sup_{n\geq 1}\int_{\R} |\zeta|^2 \nu^n(d\zeta) \sup_{m\geq 1} \E\left[ X^{m}_T  \right] 
\end{align*}
Therefore,
\[
\lim_{a\to\infty}\sup_{n\in\N} \P\left( \int_{[0,N]\times \R} \bm1_{|\zeta|>a} dX^n_s \nu^n(d\zeta) > \varepsilon \right) = 0.
\]
Furthermore, since $(X^n)_{n\geq 1}$ is tight on $C([0,T],\R)$, {the first two `modified' characteristics $(b^nX^n,(c^n+\int_{\R}\zeta^2 \nu^n(d\zeta) )X^n)$ of $Z^n$ are $C$-tight by virtue of \ref{eq:stabilityi}.}
 In addition, for $p\in \N$ and $h_p(\zeta)=(p|\zeta|-1)^+ \wedge 1$,  $X^n \int_{\R} h_p(\zeta)\nu^n(d\zeta)$ is also $C$-tight thanks to  \ref{eq:stabilityi}. Whence, we may  apply \citet[Theorem~VI.4.18]{Jacod_2003} to get that $(Z^n)_{n\in\N}$ is tight on $D([0,T],\R)$. Finally, by passing to a further subsequence, we have $(X^n,Z^n)\Rightarrow(X,Z)$ on $C([0,T],\R)\times D([0,T],\R)$ for some limiting process $(X,Z)$ defined on a probability space $(\Omega,\mathcal F, \P)$.

$\bullet$  We now move to the \textit{stability} part. We start by proving that $Z$ is a semimartingale with characteristics $(bX,cX,\nu(d\zeta)X)$ with respect to the filtration  $\F=(\mathcal F_{t})_{t\leq T}$ generated by $(X,Z)$. Since $X^n$ and $X$ are continuous, we have $X^n\Rightarrow X$ on $D([0,T],\R)$ so that,   by virtue of   \ref{eq:stabilityi}, we have, by passing to a further subsequence,
$$ \left(X^n,Z^n, b^nX^n,\left(c^n+\int_{\R}\zeta^2 \nu^n(d\zeta) \right)X^n \right) \Rightarrow  \left(X,Z, bX,\left(c+\int_{\R}\zeta^2 \nu(d\zeta) \right)X \right) $$
and for any  continuous and bounded function $h$ vanishing around zero:
$$ \left(X^n,Z^n,  \int_{\R_+}h(\zeta) \nu^n(d\zeta) X^n\right) \Rightarrow  \left(X,Z, \int_{\R_+}h(\zeta) \nu(d\zeta) X\right).$$
 It follows from \citet[Theorem IX-2.4 and Remark IX-2.21]{Jacod_2003} that $Z$ is a semimartingale with  characteristics $(bX,cX,\nu(d\zeta)X)$ with respect to the filtration  $\F$.\\
An application of Skorokhod's representation theorem provides the existence of a common filtered probability space $(\Omega,\mathcal F ,(\mathcal F_t)_{t\geq 0} ,\P)$ supporting a sequence of copies $(X^n,Z^n)_{n\geq 1}$ that converges on $C([0,T]\times \R)\times D([0,T]\times \R)$, almost surely,  along a subsequence, towards a copy of $(X,Z)$. Keeping the same notations for these copies, we have 
\begin{align}\label{eq:skoconv}
\|X^n - X\|_{C([0,T],\R)} \to 0, \quad  \|Z^n - Z\|_{D([0,T],\R)} \to 0,   \quad  \P-a.s., \quad  \mbox{as } n \to \infty.
\end{align}
Now  fix $t\leq T$ and write 
\begin{align*}
\int_0^t K^n(t-s)Z^n_s ds - \int_0^t K(t-s)Z_s ds&=  \int_0^t (K^n(t-s)-K(t-s))Z^n_s ds \\
&\quad + \int_0^t K(t-s)(Z^n_s-Z_s) ds\\
&=\bold{I}_n+\bold{II}_n.
\end{align*}
Due to the convergence of $(Z^n)_{n\geq 1}$ on $D([0,T],\R)$,  $Z^n_s \to Z_s$ $\P\times dt$--almost everywhere and $\sup_{n\geq 1} \sup_{s\leq T} |Z^n_s|<\infty$ so that $\bold{I}_n\to 0$ as $n\to \infty$ by virtue of \ref{eq:stabilityii}  and  $\bold{II}_n\to 0$ by dominated convergence.   This shows that $\int_0^t K^n(t-s)Z^n_s ds \to \int_0^t K(t-s)Z_s ds$.  Combined with \ref{eq:stabilityiv}, we get, after taking the limit in \eqref{eq:X^n_}, that 
\begin{align*}
X_t = \lim_{n \to \infty} X^n_t = G_0(t) + \int_0^t K(t-s)Z_s ds,
\end{align*}
for all $t\leq T$. Since $X,G_0$ and $t\mapsto \int_0^t K(t-s)Z_s ds$ are continuous,  one can interchange the quantifiers so that the previous identity holds for all $t\leq T$, $\P$ almost surely.  Finally,  each $X^n$ being non-decreasing and nonnegative, the limit process $X$ is again non-decreasing and nonnegative, which ends the proof. 
\end{proof}

\section{Existence for $L^1$-kernels}\label{S:existence}

\subsection{Existence for the stochastic Volterra equation}\label{S:existencestochastic}
In this section we  prove the existence of solutions for the stochastic Volterra equation \eqref{eq:generalsve}, i.e. Theorem~\ref{T:MainIntro}. We proceed in two steps. We first prove the claimed existence for smooth kernels $K\in C^1$ and finite measures $\nu$. Second, we apply a density argument, i.e.~Theorem \ref{T:stability}, to obtain the existence for $K \in L^1_{\rm loc}$ with possibly infinite activity jumps. We point out that for $L^2$-kernels and possibly infinite activity jumps, existence was already obtained by \citet{cuchiero2020generalized} using infinite dimensional Markovian lifts. However, the set of admissible input curves there is different than $\mathcal G_K$, recall \eqref{eq:notsofriendlyset}, and the assumptions on $K$ are  different. For this  reason, we provide another proof in the $L^2$-setting by working directly on the level of the scalar stochastic Volterra equation, in the spirit of \citet{AJEE:19markovian,Abi_Jaber_2017}.

To this end, for a stopping time $\tau$ we extend the definition of the set $\mathcal G_K$  in \eqref{eq:notsofriendlyset2} by considering 
\begin{align}
\!\!\!\mathcal G^{\tau}_{K} \!=\! \big\{ \!(g(s))_{s\geq 0} &\!\mbox{ adapted process:  satisfying }\! \eqref{eq:notsofriendlycondition} \nonumber \\
&\quad  \!\mbox{ and continuous on }\! [\tau(\omega),\infty) \mbox{ with }\! g(\tau(\omega))\geq 0  \mbox{ a.s.}\big\}. \label{eq:notsofriendlyset2}
\end{align} 

 The following lemma provides some elementary results on $\mathcal G_K$.
\begin{lemma}\label{L:Gtau} Let $K$ be nonnegative, non-increasing and continuous on $[0,\infty)$ and $\tau$ a stopping time. 
	\begin{enumerate}
		\item  \label{L:Gtaui}
		Let $\eta$ be a nonnegative random variable, then  $s\mapsto \bm 1_{\tau \leq s} K(s-\tau)\eta$ belongs to $\mathcal G^{\tau}_{K}$, 
		\item \label{L:Gtauii}
		If $g_1,g_2  \in \mathcal G^{\tau}_K$, then $g_1 + g_2 \in \mathcal G^{\tau}_{K}$. If $\tau\leq \tau'$, then $\mathcal G_K^{\tau} \subset \mathcal G_K^{\tau'}$.
	\end{enumerate}
\end{lemma}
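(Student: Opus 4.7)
The plan is to verify both parts by direct pathwise computation, with the resolvent identity $K * L \equiv 1$ as the main algebraic lever.

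For Part (i), I fix $\omega$ and write $a := \tau(\omega) \in [0,\infty]$, $c := \eta(\omega) \geq 0$, so $g(s,\omega) = \mathbf{1}_{\{s \geq a\}} K(s-a)\, c$. Adaptedness, continuity on $[a, \infty)$, and $g(a,\omega) = K(0)\, c \geq 0$ all follow directly from the hypotheses on $K$ and the stopping time property of $\tau$. For the functional inequality in \eqref{eq:notsofriendlycondition}, the case $t < a$ is trivial since $g$ vanishes on $[0, a)$. For $t \geq a$, set $u = t - a$ and $\phi_h := \Delta_h K * L$; the inequality reduces to
\begin{align*}
K(u+h) - \phi_h(0) K(u) - \int_0^u K(u-s)\, d\phi_h(s) \geq 0.
\end{align*}
Here the key computation uses associativity together with $K * L \equiv 1$ to get $\phi_h * K = \Delta_h K * (L * K) = \Delta_h K * \mathbf{1}_{[0,\infty)}$, so $(\phi_h * K)(u) = \int_h^{u+h} K(r)\, dr$, whose $u$-derivative equals $K(u+h) = \Delta_h K(u)$. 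The convolution differentiation rule then identifies the LHS of the displayed inequality with $\Delta_h K(u) - \tfrac{d}{du}(\phi_h * K)(u) = 0$, so the inequality holds with equality.

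For Part (ii), the sum statement follows immediately from the linearity of \eqref{eq:notsofriendlycondition} and the additivity of continuity, nonnegativity, and adaptedness. For the inclusion $\mathcal G_K^{\tau} \subset \mathcal G_K^{\tau'}$ when $\tau \leq \tau'$, continuity on $[\tau',\infty) \subset [\tau,\infty)$ and the functional inequality are inherited by restriction. The only nontrivial point is $g(\tau') \geq 0$: applying \eqref{eq:notsofriendlycondition} pathwise at the random pair $(t,h) = (\tau, \tau' - \tau)$ and using the convention that elements of $\mathcal G_K^{\tau}$ vanish on $[0, \tau)$ (as exemplified in (i) and consistent with Example~\ref{E:g0} at $\tau=0$), the convolution term $\int_0^{\tau} g(\tau - s)\, d\phi_{\tau'-\tau}(s)$ vanishes, leaving $g(\tau') \geq \phi_{\tau'-\tau}(0)\, g(\tau) \geq 0$.

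The substantive obstacle lies in Part (i), specifically the convolution identity $\tfrac{d}{du}(\phi_h * K) = \Delta_h K$, which hinges decisively on the resolvent relation $K * L \equiv 1$ and on the rigorous Lebesgue--Stieltjes interpretation of $d\phi_h$ (valid under \eqref{eq:K orthant} by the footnote following \eqref{eq:notsofriendlycondition}); the remaining verifications are essentially bookkeeping.
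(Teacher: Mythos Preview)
Your argument for (i) is correct and coincides with the paper's: both show that \eqref{eq:notsofriendlycondition} holds with equality for $g(s)=\bm 1_{\tau\leq s}K(s-\tau)\eta$, via the identity $\Delta_h K = \phi_h(0)K + d\phi_h * K$ where $\phi_h:=\Delta_h K*L$. The paper quotes this identity from \citet[Lemma~B.2 and Remark~B.3]{AJEE:19markovian}; you derive it by computing $(\phi_h*K)(u)=\int_h^{u+h}K(r)\,dr$ through associativity and $K*L\equiv 1$, then differentiating. Same content, same conclusion.

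For the sum in (ii) you agree with the paper. Your treatment of the inclusion $\mathcal G_K^\tau \subset \mathcal G_K^{\tau'}$, however, has a genuine gap: the ``convention that elements of $\mathcal G_K^\tau$ vanish on $[0,\tau)$'' is \emph{not} part of the definition \eqref{eq:notsofriendlyset2}. A process in $\mathcal G_K^\tau$ is defined on all of $[0,\infty)$ with no sign or vanishing constraint on $[0,\tau)$; the example from (i) happens to vanish there, but that is incidental. Evaluating \eqref{eq:notsofriendlycondition} at $(t,h)=(\tau,\tau'-\tau)$ gives
\[
g(\tau')\ \geq\ \phi_{\tau'-\tau}(0)\,g(\tau) + \int_0^\tau g(\tau-s)\,d\phi_{\tau'-\tau}(s),
\]
and the integral involves the uncontrolled values of $g$ on $[0,\tau)$, so $g(\tau')\geq 0$ does not follow from your argument. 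The paper's one-liner ``straightforward from the affine structure'' is equally silent on this point; in the only place the inclusion is actually invoked (the proof of Lemma~\ref{T:L2existence}), the needed nonnegativity $\tilde g^i_{\tau_{i+1}}(\tau_{i+1})=Y^i_{\tau_{i+1}}\geq 0$ holds directly from the nonnegativity of the solution $Y^i$, so the general inclusion is never stress-tested there.
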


\begin{proof}
	\ref{L:Gtauii} is straightforward from  the affine structure of \eqref{eq:notsofriendlycondition}.
	We  prove \ref{L:Gtaui}. Clearly  $g:s\mapsto \bm 1_{\tau \leq s} K(s-\tau)\eta$ is continuous on $\{\tau(\omega)\leq t\}$  such that  $g(\tau)=K(0)\eta \geq 0$ a.s. To argue \eqref{eq:notsofriendlycondition}, we fix $h\geq 0$.  It follows from 
 \citet[Lemma B.2 and Remark B.3]{AJEE:19markovian} that 
$$ \Delta_h K = (\Delta_h K *L) (0) K + d(\Delta_h K *L)*K.$$
 Whence, on $\{\tau \leq t\}$:
 \begin{align}
 \Delta_h g(t)&= \Delta_h K(t-\tau ) \eta\\
 &= (\Delta_h K *L) (0) K(t-\tau)\eta + (d(\Delta_h K *L)*K)(t-\tau) \eta \\
 &=(\Delta_h K *L) (0) g(t) + \int_0^{t} d(\Delta_h K *L)(ds)\bm 1_{\tau \leq t-s}K(t-s-\tau) \eta\\
 &=(\Delta_h K *L) (0) g(t) + (d(\Delta_h K *L)*g)(t)
 \end{align}
 which yields that the left hand side of \eqref{eq:notsofriendlycondition} is zero, leading to \ref{L:Gtaui}.  
\end{proof}

We  recast the existence  results of  \citet{Abi_Jaber_2017,AJEE:19markovian} obtained for $L^2$-kernels and deterministic input curves $g_0 \in \mathcal G_K$ in the absence of jumps in our framework  to allow for random input curves.

\begin{lemma}\label{L:existencesvecontinuous}
Fix $\beta,\sigma \in \R$  and let $K \in C^1[0,\infty)$ satisfying \eqref{eq:K orthant}. Let $(\Omega,\Fc,(\Fc_t)_{t\leq T}, \P)$ denote a probability space supporting a  Brownian motion $W$. Fix a stopping time $\tau$ and a  process  $\tilde g_{\tau} \in  \mathcal G^{\tau}_{K}$. Then, the equation
		\begin{align}\label{eq:Ytau}
	Y_t = \tilde g_{\tau}(t) + \int_{\tau}^t K(t-s)\beta Y_s ds + \int_{\tau}^t K(t-s)\sigma \sqrt{Y_s} dW_s. 
	\end{align}
	admits a unique nonnegative continuous and adapted strong solution $Y$ on $[\tau,\infty)$.
	Furthermore, $\mathcal G^{\tau}_{K}$ is invariant for the process 
	 \begin{align}
	 \tilde g_t(s)= \tilde g_{\tau}(s) + \int_{\tau}^t K(s-u)\beta Y_u ds + \int_{\tau}^t K(s-u)\sigma \sqrt{Y_u} dW_u, \quad \tau \leq t\leq s,
	 \end{align}
	 meaning that $g_t$ is $\mathcal G^{\tau}_{K}$-valued on $[\![\tau,\infty)\!)=\{(\omega,t): \tau(\omega)\leq t \}$.
\end{lemma}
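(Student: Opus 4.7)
The plan is to reduce to the case $\tau=0$ by a strong-Markov time-shift, appeal to the existing existence and pathwise uniqueness results for Volterra square-root SVEs with $\mathcal G_K$-valued input curves \citep{Abi_Jaber_2017,AJEE:19markovian}, and then verify the invariance of $\mathcal G^\tau_K$ using the algebraic identity underlying Lemma~\ref{L:Gtau}.

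Concretely, conditioning on $\mathcal F_\tau$ I would set $\hat Y_t = Y_{\tau+t}$, $\hat g_0(t)=\tilde g_\tau(\tau+t)$ and $\hat W_t=W_{\tau+t}-W_\tau$. By the strong Markov property $\hat W$ is a Brownian motion independent of $\mathcal F_\tau$ in the shifted filtration, and \eqref{eq:Ytau} is equivalent to the standard SVE
\begin{equation*}
\hat Y_t = \hat g_0(t) + \int_0^t K(t-s)\beta\hat Y_s\,ds + \int_0^t K(t-s)\sigma\sqrt{\hat Y_s}\,d\hat W_s.
\end{equation*}
The admissibility condition defining $\mathcal G^\tau_K$ is tailored so that $\hat g_0$ is (a.s.) an element of $\mathcal G_K$ for the shifted time origin. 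The existence result of \citet{AJEE:19markovian} (on which the set $\mathcal G_K$ is modelled) together with the Volterra-type pathwise uniqueness of \citet{Abi_Jaber_2017}, which exploits the $C^1$ regularity of $K$ to propagate H\"older regularity of $\hat Y$, then produces a unique nonnegative continuous adapted strong solution; the passage from deterministic to $\mathcal F_\tau$-measurable $\hat g_0$ is routine via regular conditional probabilities. Undoing the shift gives the desired $Y$ on $[\tau,\infty)$.

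For the invariance $\tilde g_t\in\mathcal G^t_K$, I would decompose
\begin{equation*}
\tilde g_t(s)-\tilde g_\tau(s) = \beta\int_\tau^t K(s-u)Y_u\,du + \sigma\int_\tau^t K(s-u)\sqrt{Y_u}\,dW_u,
\end{equation*}
and argue that each term on the right belongs to $\mathcal G^t_K$. The crucial observation is that the identity
$\Delta_h K(s-u) = (\Delta_h K * L)(0)\,K(s-u) + (d(\Delta_h K * L)*K)(s-u)$
on $\{u\le s\}$, derived inside the proof of Lemma~\ref{L:Gtau}(i), is an \emph{equality} and is insensitive to the sign of any multiplicative random factor. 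Riemann-sum approximations of the Lebesgue and It\^o integrals thus satisfy \eqref{eq:notsofriendlycondition} with equality, and the limit preserves the inequality. The boundary condition $\tilde g_t(t)=Y_t\ge 0$ is automatic from the SVE and the nonnegativity of $Y$ already established, while continuity on $[t,\infty)$ follows from the $L^1$-integrability of $K$ together with the standard estimates of Section~\ref{S:apriori}. Combining with Lemma~\ref{L:Gtau}(ii) gives $\tilde g_t\in\mathcal G^t_K$.

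The main obstacle is the interaction between the time shift and the admissibility condition: the convolutions in \eqref{eq:notsofriendlycondition} do not transform covariantly under translation of the origin, so verifying $\hat g_0\in\mathcal G_K$ cannot be done by a blind shift of an abstract admissible curve and must use the concrete ``restart'' structure of $\tilde g_\tau$ inherited from past SVE dynamics. A secondary technical point is Volterra pathwise uniqueness under the square-root coefficient; this is handled by the Yamada--Watanabe scheme adapted to the convolution structure of $K$ as in \citet{Abi_Jaber_2017,mytnik2015uniqueness}, with the $C^1$ regularity of $K$ ensuring the Hölder control of $\hat Y$ needed to close the argument.
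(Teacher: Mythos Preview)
Your reduction to $\tau=0$ and the appeal to the existence result of \citet{AJEE:19markovian} together with the pathwise uniqueness of \citet{Abi_Jaber_2019} is exactly the paper's route; the paper also leaves the passage from deterministic to $\mathcal F_\tau$-measurable inputs as a ``straightforward adaptation'', so you are neither more nor less rigorous on that point.

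Where you genuinely diverge is in the invariance argument. The paper does not verify \eqref{eq:notsofriendlycondition} directly for $\tilde g_t$. Instead it observes the conditional-expectation identity
\[
\tilde g_t(s)=\E\!\left[\,Y_s-\int_t^s K(s-u)\beta Y_u\,du\;\Big|\;\mathcal F_t\right],
\]
and then invokes \citet[Theorem~3.1]{AJEE:19markovian}, which already shows that this ``adjusted forward process'' is $\mathcal G_K$-valued. Your route---pushing the kernel identity $\Delta_h K=(\Delta_hK*L)(0)K+d(\Delta_hK*L)*K$ through the integrals---is correct in spirit and more self-contained, since it avoids quoting an invariance theorem from the literature. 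Two remarks: (i) the Riemann-sum framing is unnecessarily delicate for the It\^o term; it is cleaner to apply stochastic Fubini once and read off that the left-hand side of \eqref{eq:notsofriendlycondition} equals $\int_\tau^t[\Delta_hK-(\Delta_hK*L)(0)K-d(\Delta_hK*L)*K](\cdot-u)\,\sigma\sqrt{Y_u}\,dW_u\equiv 0$ directly; (ii) your observation that the kernel identity is an \emph{equality}, hence insensitive to the sign of the integrator, is the right reason why the stochastic term causes no trouble. The trade-off: the paper's proof is shorter because the hard work is outsourced, while yours exposes the mechanism and does not rely on locating the precise statement in \citet{AJEE:19markovian}.
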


	\begin{proof}
		We first argue for $\tau\equiv 0$ and deterministic input $g_0\in \mathcal G_K$. The existence of a $\RR_+$--valued  continuous nonnegative weak solution $Y$  to \eqref{eq:Ytau}  follows from
		\citet[Theorem A.2]{AJEE:19markovian}\footnote{We note that all the assumptions   are met there, except  for the local H\"older continuity of $g_0$. This assumption  is only needed to get H\"older sample paths of $X$, which we do not require here. Assumption $(H_0)$  there is satisfied with $\gamma=2$ since $K$ is $C^1$.}. The strong uniqueness of $Y$ follows from \citet[Proposition B.3]{Abi_Jaber_2019}. This yields the strong existence and uniqueness for \eqref{eq:Ytau}. Finally, an application of the second part of 	\citet[Theorem 3.1]{AJEE:19markovian} yields the invariance of $\mathcal G_K$ with respect to $t\mapsto \tilde g_t$, after noticing that 
		$$\tilde g_t(s)=\E\left[ Y_s -\int_t^s K(s-u)\beta Y_u du \Mid \mathcal F_t \right],\quad t\leq s.$$ For arbitrary $\tau$ and random input $\tilde g_{\tau} \in \mathcal G_K^{\tau}$, the result follows by a straightforward  adaptation of the aforementioned results. 
	\end{proof}

	We now construct a solution to \eqref{eq:generalsve}  when $\nu$ is finite and $K$ is continuously differentiable by pasting continuous solutions $Y^i$ to \eqref{eq:Ytau} on each interval $[\tau_i,\tau_{i+1})$ between two consecutive jumps. 
	
\begin{lemma}\label{T:L2existence}
	Let $b\in \R,c \geq 0$ and $\nu$ be a nonnegative finite measure supported on $\R_+$ such that $\int_{\R_+} \zeta^2 \nu(d\zeta)<\infty$. Fix $K \in C^1[0,\infty)$ satisfying \eqref{eq:K orthant}	and let $g_0 \in \mathcal G_K$. There exists a non-decreasing nonnegative continuous solution $X$ to \eqref{eq:generalsve} for the input $G_0(t)=\int_0^t g_0(s)ds$.
\end{lemma}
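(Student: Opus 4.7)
The plan is to build the density process $Y$ from which $X=\int_0^{\cdot} Y_s\,ds$ recovers a solution via Lemma~\ref{L:spotvariance}. Since $\nu$ is finite, jumps of $Z$ arrive at the finite (random) rate $\nu(\R_+)Y_t$, so I can paste continuous solutions of the Brownian Volterra equation together across jump times. Set $\tilde b = b - \int_{\R_+}\zeta\,\nu(d\zeta)$; the target equation for $Y$ is
\begin{equation*}
Y_t = g_0(t) + \int_0^t K(t-s)\tilde b\,Y_s\,ds + \int_0^t K(t-s)\sqrt{c Y_s}\,dW_s + \sum_{\tau_i\leq t,\,i\geq 1} K(t-\tau_i)\zeta_i,
\end{equation*}
where $\{(\tau_i,\zeta_i)\}_{i\ge 1}$ is a marked point process with predictable compensator $\nu(d\zeta)Y_t\,dt$.

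I would then proceed by induction on the jump times. Set $\tau_0=0$ and $g^{(0)}=g_0\in\mathcal G_K=\mathcal G_K^{0}$. Given $\tau_i$ and $g^{(i)}\in\mathcal G_K^{\tau_i}$, apply Lemma~\ref{L:existencesvecontinuous} with $\beta=\tilde b$ and $\sigma=\sqrt{c}$ to obtain a continuous nonnegative strong solution $Y^{(i)}$ on $[\tau_i,\infty)$ of the Brownian Volterra equation with input $g^{(i)}$. Independently sample a standard exponential $\xi_{i+1}$ and $\zeta_{i+1}\sim \nu/\nu(\R_+)$, and set
\begin{equation*}
\tau_{i+1} = \inf\Big\{t>\tau_i : \nu(\R_+)\!\int_{\tau_i}^t Y^{(i)}_s\,ds \ge \xi_{i+1}\Big\},
\end{equation*}
$Y_t = Y^{(i)}_t$ on $[\tau_i,\tau_{i+1})$, and update the input by
\begin{equation*}
g^{(i+1)}(t) = Y^{(i)}_t + K(t-\tau_{i+1})\zeta_{i+1}\mathbf 1_{\tau_{i+1}\leq t}.
\end{equation*}
The first summand lies in $\mathcal G_K^{\tau_i}\subset\mathcal G_K^{\tau_{i+1}}$ by the invariance statement in Lemma~\ref{L:existencesvecontinuous} together with Lemma~\ref{L:Gtau}\ref{L:Gtauii}, and the second lies in $\mathcal G_K^{\tau_{i+1}}$ by Lemma~\ref{L:Gtau}\ref{L:Gtaui}; their sum is in $\mathcal G_K^{\tau_{i+1}}$ by Lemma~\ref{L:Gtau}\ref{L:Gtauii}, closing the induction.

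Next, I would verify that $\tau_i\to\infty$ almost surely. The counting process $N_t=\#\{i\ge 1:\tau_i\le t\}$ has predictable intensity $\nu(\R_+)Y_t$, so its compensator is $\nu(\R_+)X_t$ with $X_t=\int_0^t Y_s\,ds$. Localizing at $\tau_n$ gives a nondecreasing, nonnegative process $X^{(n)}_t=X_{t\wedge\tau_n}$ that solves \eqref{eq:generalsve} driven by a semimartingale whose characteristics satisfy the growth bound \eqref{eq:condgrowthuni} with constant $\kappa_L$ independent of $n$ (see Remark~\ref{R:momentbound}), hence Lemma~\ref{L:apriori} yields $\sup_n \E[(X^{(n)}_T)^2]<\infty$ for every $T>0$. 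Monotone convergence then gives $\E[\nu(\R_+)X_T]<\infty$, so $N_T<\infty$ almost surely and $\tau_i\to\infty$. Finally, summing the contributions of all intervals $[\tau_i,\tau_{i+1})$ together with the compensator $\int\zeta\,\nu(d\zeta)Y_s\,ds$ absorbed into $\tilde b$ reconstructs $Y_t=g_0(t)+\int_0^t K(t-s)\,dZ_s$ with $Z$ having characteristics $(bY,cY,\nu(d\zeta)Y)$; the converse direction of Lemma~\ref{L:spotvariance} then identifies $X=\int_0^{\cdot} Y_s\,ds$ as the desired continuous, non-decreasing, nonnegative solution of \eqref{eq:generalsve}.

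The main obstacle is the non-explosion argument: Lemma~\ref{L:apriori} is stated for a process that already solves the equation, whereas here the solution is built inductively and could conceivably accumulate jumps in finite time. The care lies in constructing the stopped processes $X^{(n)}$ so that they genuinely satisfy a Volterra equation driven by a semimartingale meeting the hypotheses of Lemma~\ref{L:apriori} with a constant $\kappa_L$ uniform in $n$, so that the a-priori moment bound can be transferred to the limit via Fatou/monotone convergence.
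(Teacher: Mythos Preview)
Your overall strategy matches the paper's: reduce to the density equation for $Y$ via Lemma~\ref{L:spotvariance}, paste continuous Brownian Volterra solutions between consecutive jump times using Lemma~\ref{L:existencesvecontinuous}, and propagate the admissibility of the input curve through Lemma~\ref{L:Gtau}. Your non-explosion argument via Lemma~\ref{L:apriori} is in fact more explicit than the paper's one-line ``by a localization argument $\tau_i\to\infty$''.

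There is, however, a genuine error in your update of the input curve. You set
\[
g^{(i+1)}(t)=Y^{(i)}_t+K(t-\tau_{i+1})\zeta_{i+1}\bm 1_{\tau_{i+1}\le t},
\]
but $Y^{(i)}_t$ for $t>\tau_{i+1}$ already contains $\int_{\tau_{i+1}}^t K(t-s)\bigl[\tilde b\,Y^{(i)}_s\,ds+\sqrt{c Y^{(i)}_s}\,dW_s\bigr]$. Feeding this as the input for the next piece produces, on $[\tau_{i+1},\tau_{i+2})$, a spurious extra convolution term involving $Y^{(i)}$ on $(\tau_{i+1},t]$ in addition to the correct one involving $Y^{(i+1)}$; the pasted process then fails to solve the target equation. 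Moreover, the invariance statement of Lemma~\ref{L:existencesvecontinuous} that you invoke concerns the \emph{stopped forward curve}
\[
\tilde g^{(i)}_t(s)=g^{(i)}(s)+\int_{\tau_i}^{t} K(s-u)\bigl[\tilde b\,Y^{(i)}_u\,du+\sqrt{c Y^{(i)}_u}\,dW_u\bigr],\qquad \tau_i\le t\le s,
\]
not the solution $Y^{(i)}$ itself; there is no claim that $s\mapsto Y^{(i)}_s$ belongs to $\mathcal G_K^{\tau_i}$. The correct update, which is what the paper uses, is
\[
g^{(i+1)}(t)=\tilde g^{(i)}_{\tau_{i+1}}(t)+K(t-\tau_{i+1})\zeta_{i+1}\bm 1_{\tau_{i+1}\le t}.
\]
Here $\tilde g^{(i)}_{\tau_{i+1}}\in\mathcal G_K^{\tau_i}\subset\mathcal G_K^{\tau_{i+1}}$ by the invariance statement and Lemma~\ref{L:Gtau}\ref{L:Gtauii}, and the jump term lies in $\mathcal G_K^{\tau_{i+1}}$ by Lemma~\ref{L:Gtau}\ref{L:Gtaui}. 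With this correction the rest of your argument goes through.
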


\begin{proof}
	Using Lemma~\ref{L:spotvariance}, it is enough to first prove the existence of a càdlàg nonnegative solution 
	$Y$ to the equation 
	\begin{align}\label{eq:Yfinitejumps}
	Y_t = g_{0}(t) + \int_{0}^t K(t-s)dZ_s. 
	\end{align}
	where $Z$ is a semimartingale with differential characteristics with respect to the Lebesgue measure $(bY , cY, \nu(d\zeta)Y)$, and then set $X=\int_{0}^{\cdot} Y_s ds$ to obtain the desired solution to \eqref{eq:generalsve}. Since $\nu$ is finite,    finding a solution to equation \eqref{eq:Yfinitejumps} is equivalent to solving 
	\begin{align}\label{eq:Yfinitejumps2}
	Y_t = g_{0}(t) +  \sum_{i\geq 0}  \bm 1_{\tau_i\leq t}   \int_{\tau_i}^{t\wedge \tau_{i+1}}& K(t-s)\left( \beta Y_s ds +   \sqrt{c Y_s} dW_s\right) \nonumber   \\
	&\quad\quad\quad \quad\quad + \sum_{i\geq 1}  \bm 1_{\tau_i\leq t}   K(t-\tau_i)J_i,
	\end{align} 
		where $\beta = b-\int_{\R_+} \zeta \nu(d\zeta)$, $J_i$ are the jump sizes of $Z$  distributed  according to $\nu(d\zeta)/\nu(\R_+)$ and arriving at the jump times $\tau_i$ with instantaneous intensity $Y_t\nu(\R_+)$ and  $W$ is a Brownian motion on some  filtered probability space  $(\Omega,\mathcal F, (\mathcal F_t)_{t\geq 0},\P )$.\\
	Our strategy for constructing a solution $Y$ to the above equation consists in pasting continuous solutions $Y^i$ on each interval $[\tau_i,\tau_{i+1})$ between two consecutive jumps. More precisely, fix a filtered probability space  $(\Omega,\mathcal F, (\mathcal F_t)_{t\geq 0},\P )$ supporting a Brownian motion $W$ and a sequence of independent random variables $(E_i,U_i)_{i\geq 1}$ with $E_i$ exponentially distributed with intensity $1$  and $U_i$ standard uniform, and let $F_\nu$ denote the cumulative distribution function with density $\nu(d\zeta)/\nu(\R_+)$. We set $\tau_0=0$ and  we assume that for each $i\geq 0$ we have a unique nonnegative continuous solution $Y^i$ on $[\tau_i, \tau_{i+1})$ for the following system of inductive equations
\begin{align}
Y^i_t &= \tilde g^i(t) + \int_{\tau_i}^t K(t-s)\left( \beta Y^i_s ds  +  \sqrt{c Y^i_s}dW_s \right) \label{eq:Yi}\\
\tilde g^i(t)&:= g_0(t) + \sum_{j=0}^{i-1} \int_{\tau_{j}}^{\tau_{j+1}} K(t-s)\left( \beta Y_s^{j}ds +  \sqrt{c Y_s^{j}}dW_s  \right) + \sum_{j=1}^i K(t-\tau_j)J_j  \label{eq:gi}
\end{align}
with the convention that $\sum_{i=0}^{-1} = \sum_{j=1}^0=0$,	
$\tau_{i+1}= \tau_{i} + \delta_i$  with $\delta_i = \inf\{ s >0: \nu(\R_+) \int_{\tau_{i}}^{\tau_{i}+s} Y^{i}_u du \geq E_i   \}$ (with the convention that $\inf \emptyset=\infty$) and  $J_i=F_{\mu}^-(U_i)$.  Then, by a localization argument $\tau_i \to \infty$ and it is straightforward to check that the process $Y$ defined by 
$$  Y_t = \sum_{i\geq 0} Y^i_t \bm 1_{\tau_i\leq t < \tau_{i+1}}$$
is the unique càdlàg continuous nonnegative solution to \eqref{eq:Yfinitejumps2}. 
By the first part of Lemma~\ref{L:existencesvecontinuous}, the existence and uniqueness of a solution $Y^i$ is ensured provided that  the process $\tilde g^i$ is $\Gcal_K^{\tau_i}$-valued, for each $i\geq 1$. We now prove this claim by induction using the second part of Lemma~\ref{L:existencesvecontinuous}.
\textit{Initialization:} for $i=0$, $\tilde g^0=g_0$ is deterministic and $\Gcal_K$-valued by assumption. \textit{Heredity:} fix $i\geq 0$ and assume that $\tilde g^i \in \Gcal_K^{\tau_i}$-valued. Fix $Y^i$ the unique nonnegative solution to \eqref{eq:Yi} on $[\tau_i,\infty)$ obtained from Lemma~\ref{L:existencesvecontinuous}  for the input $\tilde g^i$. The second part of Lemma~\ref{L:existencesvecontinuous} yields that the process 
	 \begin{align}
\tilde g^i_t= \tilde g^i + \int_{\tau_i}^t K(\cdot-u)\beta Y^i_u ds + \int_{\tau_i}^t K(\cdot-u)\sqrt{c Y^i_u} dW_u, 
\end{align}
is $\Gcal_K^{\tau_i}$-valued on $[\![\tau_i,\infty)\!)$. In particular, the stopped process
$ \tilde g^i_{\tau_{i+1}}$ belongs to $\Gcal_K^{\tau_i}$. 
We now  observe that \eqref{eq:gi} can be re-written in terms of $\tilde g_{\tau_{i+1}}^i$:
\begin{align}
 \tilde g^{i+1}(t)&= \tilde g^i(t) + \int_{\tau_i}^{\tau_{i+1}} K(t-s)\left( \beta Y^i_s ds + \sqrt{ c Y^i_s}dW_s \right) + K(t-\tau_{i+1})J_{i+1}\\
 &= \tilde g^i_{\tau_{i+1}}(t) + K(t-\tau_{i+1})J_{i+1},
\end{align}
on $\{t\geq \tau_{i+1}\}$.
Since $K$ is nonnegative and $J_{i+1}\geq 0$, recall that $\nu$ is supported on $\R_+$, an application of Lemma~\ref{L:Gtau} yields that   $\tilde g^{i+1} \in \mathcal G_K^{\tau_{i+1}}$. This proves the induction and ends the proof.
\end{proof}

For the general case, we use a density argument, i.e.~Theorem~\ref{T:stability}, to obtain the existence statement in Theorem~\ref{T:MainIntro}.

\begin{proof}[Proof of the existence statement in Theorem~\ref{T:MainIntro}]
	Fix $n \geq 1$. 
	Let $\nu^n(d\zeta)=\bm1_{\zeta \geq \frac  1 n} \nu(d\zeta)$ and $K^n=\Delta_{1/n}K$. Then, $\nu^n$ is a nonnegative  finite measure  supported on $\R_+$  such that  $\int_{\R_+} \zeta^2 \nu^n(d\zeta)<\infty$.
	 An application of Lemma~\ref{T:L2existence} yields the existence of  a non-decreasing and continuous process $X^n$   solution to \eqref{eq:generalsve} with the inputs $(G_0^n,K^n,b,c,\nu^n)$, where $G_0^n=\int_0^{\cdot} g_0^n(s)ds$. Each $X^n$ being non-decreasing, the claimed existence now follows  from Theorem~\ref{T:stability}, once we prove that  conditions \ref{eq:stabilityi}-\ref{eq:stabilityiv} are satisfied. \ref{eq:stabilityi} is clearly satisfied. \ref{eq:stabilityii} holds  by the $L^1$--continuity of the kernel $K$, see \citet[Lemma 4.3]{brezis2010functional}.   Finally, to obtain \ref{eq:stabilityiv} we first observe that $g_0^n$  is nonnegative, this follows from \eqref{eq:notsofriendlycondition} evaluated at  $t=0$. Whence, $G^n_0$ is non-decreasing with a continuous pointwise limit $G_0$. An application of Dini's second theorem yields \ref{eq:stabilityiv}.
	The proof is complete.
\end{proof}

\subsection{Existence for the Riccati--Volterra equation}\label{S:existenceRiccati}
In this section we prove Theorem~\ref{T:existenceRiccatiL1}.

\begin{proof}[Proof of Theorem~\ref{T:existenceRiccatiL1}]
	 We first note that the function $z\mapsto \int_{\R_+} \left(e^{z\zeta}-1 -z\zeta \right)\nu(d\zeta)$ is continuous on $\mathcal U= \{z \in \mathbb C: \Re(z)\leq 0 \}$. Whence, since $f_2$ is continuous and $\Re(f_2) \leq 0$ by \eqref{eq:coeffaf}, we obtain that $(s,u)\mapsto \int_{\R_+} \left(e^{(f_2(s)+u)\zeta}-1 -(f_2(s)+u)\zeta \right)\nu(d\zeta)$ is continuous on $[0,T]\times \mathcal U$. We define  
	$$  \tilde F(s,u)= F(s,\Re(u)\bm 1_{\Re(u)\leq 0} + i \Im(u)),$$
	where we recall that $F$ is given by \eqref{eq:RiccatiL2b}. Then, $\tilde F$ is continuous on $[0,T] \times \mathbb C$ so that an application of \citet[Theorem 12.1.1 and the comment below]{gripenberg1990volterra} (on the positive and negative parts) yields the local existence of a continuous solution $\psi$ to the equation 
	\begin{align}\label{eq:ricFtilde}
	\psi(t)=\int_0^t K(t-s)\tilde F(s,\psi(s))ds
	\end{align}
	on the interval $[0,T_{\infty})$ where $T_{\infty}=\inf\{t: |\psi(t)|=\infty\}$. In order to obtain the claimed existence for \eqref{eq:RiccatiL2a}-\eqref{eq:RiccatiL2b} it suffices to prove that $T_{\infty}=+\infty$ and that $\Re(\psi(t))\leq 0$, for all $t\geq 0$. \\
	\textit{Step 1. We first prove that $\Re(\psi)\leq 0$ on $[0,T_{\infty})$.}  Fix $T<T_{\infty}$ and denote by $\tilde \psi_{\rm r}(t)= \Re(\psi(t))\bm 1_{\Re(\psi(t))\leq 0}$. Taking real parts in \eqref{eq:ricFtilde}, we get that $\chi=\Re(\psi)$ satisfies
\begin{align}\label{eq:chiabove1}
\chi(t) =\int_0^t K(t-s)\Re(\tilde F(s,\psi(s))ds, \quad t\leq T,
\end{align}
with 
\begin{align*}
\Re(\tilde F(s,\psi(s))&=\Re(f_0(s)) + \frac 1 2 c \Re(f_1(s))^2 - \frac c 2  ( \Im(f_1(s))  +\Im(\psi(s)))^2 \\
&\quad +\left(b + c\Re(f_1(s))+ \frac c 2 \tilde \psi_{\rm r}(s)\right)\tilde \psi_{\rm r}(s)\\
&\quad  + \int_{\R_+} \left(\cos(\Im(f_2(s)+\psi(s))\zeta)e^{(\Re(f_2(s))+\tilde\psi_{\rm r}(s))\zeta}-1 -(\Re(f_2(s))+\tilde\psi_{\rm r}(s))\zeta \right)\nu(d\zeta).
\end{align*}
	Since $\cos \leq 1$ we obtain, for $v\leq 0$,
	\begin{align*}
	 \cos(a)e^{v\zeta}-1 -v\zeta \leq e^{v\zeta}-1 -v\zeta= \zeta^2\int_v^0 \int_r^0 e^{vl}dl dr \leq  \frac{\zeta^2v^2}{2}.
	\end{align*}
	Whence, 
	\begin{align}\label{eq:chiabove2}
\Re(\tilde F(s,\psi(s)) &\leq  z(s)\tilde \psi_{\rm r}(s)+w(s), \quad s \leq T,
	\end{align}
	with 
			\begin{align*}
	z(s)&=    b + c\Re(f_1(s)) + \frac c 2 \tilde\psi_{\rm r}(s) + \int_{\R_+} \zeta^2 \nu(d\zeta)\left(\Re(f_2(s))+\frac  1 2 \tilde\psi_{\rm r}(s)   \right)\\
	w(s)&= \Re(f_0(s)) + \frac 1 2 c \Re(f_1(s))^2  + \frac 1 2 \int_{\R_+} \zeta^2 \nu(d\zeta)(\Re(f_2(s))^2 - \frac c 2  ( \Im(f_1(s))  +\Im(\psi(s)))^2 .
	\end{align*}
	Observing  that $\tilde \psi_{\rm r}(t)=\bm 1_{\Re(\psi(t))\leq 0}\chi(t)$, recalling that $K$ is nonnegative and combining \eqref{eq:chiabove1}-\eqref{eq:chiabove2} leads to
	\begin{align*}
	\chi(t)\leq \int_0^t K(t-s) (\tilde z(s)\chi(s)+ w(s))ds,
	\end{align*}
	with $\tilde z(s)=\bm 1_{\Re(\psi(s))\leq 0} z(s)$. Whence, 
	\begin{align*}
	\chi(t)= - h(t) + \int_0^t K(t-s)  (\tilde z(s)\chi(s)+ w(s))ds
	\end{align*}
	for some nonnegative function $h$.   Denoting by $\tilde \chi$ the solution to the linear equation 
\begin{align}\label{eq:tildechi}
 \tilde \chi(t) = \int_0^t K(t-s)(\tilde z(s)\tilde \chi(s)+ w(s))ds
\end{align}
 we obtain that $\delta=(\tilde \chi - \chi)$ solves the linear equation 
$$  \delta(t)=h(t)+ \int_0^t K(t-s)\tilde z(s)\delta(s)ds.$$ 
Since, $h \geq 0$, an application of  \citet[Theorem C.1]{Abi_Jaber_2019}\footnote{Inspecting the proof of  \citet[Theorem C.1]{Abi_Jaber_2019} one can see that the $L^2$ integrability on the kernel assumed there does not play any role, the result remains clearly valid  for $K \in L^1([0,T],\R)$. Similarly, the continuity assumption on $z$ there can be weakened to local boundedness.} leads to $\delta\geq 0$ on $[0,T]$ so that  
\begin{align}\label{eq:chichitilde}
\chi(t)\leq \tilde \chi(t), \quad t\leq T.
\end{align}
We now argue that $\tilde \chi \leq 0$. 
	 By virtue of \eqref{eq:coeffaf}, $w(s)\leq 0$ for all $s\leq T$, so that another application  of  \citet[Theorem C.1]{Abi_Jaber_2019} on the equation \eqref{eq:tildechi} leads to $\tilde \chi(t)\leq 0$, for all $t\leq T$. Finally, from \eqref{eq:chichitilde}, we obtain that  $\Re(\psi(t))=\chi(t)\leq \tilde \chi(t)\leq 0$, for all $t\leq T$. The claimed conclusion follows by arbitrariness of $T<T_{\infty}$.\\
	 \textit{Step 2. We now argue that $T_{\infty}=\infty$.}  Fix $T<T_{\infty}$.  By the above we have $\tilde \psi_{\rm r} = \Re(\psi)$ so that $\tilde F(s,\psi(s))=F(s,\psi(s))$ for all $s< T_{\infty}$. Using this fact in \eqref{eq:ricFtilde}, we observe that $\psi$ solves the linear equation 
	 \begin{align*}
	 h(t)&= \int_0^tK(t-s) \left(\frac{c\psi(s)} 2 h(s)+ \alpha(s) \right)ds,
	 \end{align*}
	 with $\alpha$ defined by 
	 $$ \alpha(s)=F(s,\psi(s)) - \frac{c}{2}\psi(s)^2.$$
	 Since $\Re(\psi)\leq 0$, an application of  \citet[Theorem C.4]{Abi_Jaber_2019}\footnote{Again the $L^2$ integrability on the kernel assumed there does not play any role, the result remains clearly valid  for $K \in L^1([0,T],\R)$.} yields that $\sup_{t\leq T} |\psi(t)|<\infty$. By arbitrariness of $T$ we obtain that $T_{\infty}=\infty$. The proof is complete.
	\end{proof}

\section{Weak uniqueness and the Fourier--Laplace transform}\label{S:uniqueness}
In this section, we prove Theorem~\ref{T:MainIntrouniqueness}.

Throughout this section,  we fix  $T\geq 0$, $f_0,f_1,f_2:[0,T]\to \mathbb C$ continuous functions, $G_0$ a non-decreasing continuous function  and $K \in L^1([0,T],\R)$. We let  $\psi \in C([0,T],\C)$  denote a solution to the Riccati equation \eqref{eq:RiccatiL2a}-\eqref{eq:RiccatiL2b} such that \eqref{eq:condmomentpsi} holds and  $X$ be  a non-decreasing nonnegative continuous weak solution to \eqref{eq:generalsve} for the input $(G_0,K,b,c,\nu)$. We recall the decomposition  of $Z$ in \eqref{eq:decompZ} and we define the process $V^T$:
{\begin{align}
V_t^T &= V_0^T + \int_0^t \beta^T_s dX_s + \int_0^t \left( f_1(T-s)+\psi(T-s) \right)dM^c_{s} \nonumber
\\ & \quad \quad \quad \quad \quad \quad \quad \quad \quad \quad \quad \quad + \int_0^t  \left( f_2(T-s)+\psi(T-s) \right) dM^d_s\label{eq:processweak1}\\
\beta^T_s &= -\frac c 2 \left( f_1(T-s)+  \psi(T-s)\right)^2 \nonumber \\
&\quad \quad  - \int_{\R_+} \left(e^{(f_2(T-s) + \psi(T-s))\zeta} -1 - (f_2(T-s) + \psi(T-s))\zeta \right) \nu(d\zeta)   \label{eq:processweakbeta}\\
V_0^T &= \int_0^T F(T-s,\psi(T-s)) dG_0(s) \label{eq:processweak2}.
\end{align}
We note that the integral involving $\nu$ is well-defined by virtue of \eqref{eq:condmomentpsi} combined with the local boundedness of $(\psi,f_2)$ and the inequality $|e^{\alpha \zeta}-1-\alpha \zeta|\leq e^{\Re(\alpha)\zeta}\alpha\zeta^2/2$.} The Lebesgue-Stieltjes integrals are well-defined since $(\psi,f_1,f_2)$ are continuous and $(G_0,X)$ are of locally bounded variation.

{$(V^T)_{t\leq T}$ is a semimartingale 
and a straightforward application of It\^o's Lemma yields that the stochastic exponential  $H=\exp(V^T )$ is a complex local martingale with dynamics 
\begin{align*}
dH_t &= H_{t-}dN_t \\
dN_t&= (f_1(T-t) + \psi(T-t))dM^c_t  +  \int_{\R_+} \left(e^{(f_2(T-t)+\psi(T-t))\zeta}-1\right) \left( \mu^Z (dt,d\zeta) -  \nu(d\zeta) dX_t \right) ,
\end{align*}
meaning that $H=\mathcal E(N)$, where $\mathcal E$ stands for the Doléans--Dade exponential.}
 The following lemma, which extends \citet[Lemma 7.3]{Abi_Jaber_2017}, establishes that $H$ is even a true martingale.

\begin{lemma}\label{L:truem}
	Let $g_1,g_2\in L^\infty(\R_+,\R)$ such that 
			\begin{align}\label{eq:condg2}
	\sup_{s\geq 0} \int_{\R_+} e^{g_2(s) \zeta} \zeta^2 \nu(d\zeta) < \infty 
	\end{align}
	and define 
\begin{align}\label{eq:U}
U_t = \int_0^t g_1(s) dM_s^c + \int_{[0,t]\times \R_+} \left(e^{g_2(s)\zeta}-1\right) \left( \mu^Z (ds,d\zeta) -  \nu(d\zeta) dX_s \right) .
\end{align}	
	Then, the Doléans--Dade exponential $\mathcal E (U)$ is a martingale. Furthermore, $H=\exp(V^T )$ is a martingale on $[0,T]$.
\end{lemma}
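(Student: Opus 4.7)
The plan is to establish the two claims in sequence: first that $\mathcal E(U)$ is a true martingale for real-valued $g_1, g_2$, and then to bootstrap this to the complex exponential $H = \exp(V^T)$.

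For the first claim, since $g_2$ is real-valued, the jumps $\Delta U = e^{g_2(\cdot)\zeta} - 1$ of $U$ are strictly greater than $-1$, so $\mathcal E(U)$ is a strictly positive local martingale, hence a supermartingale with $\E[\mathcal E(U)_T]\leq 1$; the task reduces to establishing the reverse inequality. I would introduce the localizing sequence $\tau_n := \inf\{t\geq 0 : X_t \geq n\}$, which increases almost surely to $\infty$ by continuity of $X$. On $[0,T\wedge\tau_n]$ the quadratic variation of $U$ is bounded, using $g_1, g_2 \in L^\infty$, the bound $X_{T\wedge\tau_n}\leq n$, and condition \eqref{eq:condg2}, so $\mathcal E(U)^{\tau_n}$ is a true martingale. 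The central step is then a Girsanov change of measure: setting $d\mathbb Q_n/d\P := \mathcal E(U)_{T\wedge\tau_n}$, under $\mathbb Q_n$ the semimartingale $Z$ on $[0,T\wedge\tau_n]$ has modified characteristics $(\tilde b X,\, cX,\, e^{g_2(\cdot)\zeta}\nu(d\zeta) X)$ with $\tilde b = b + c g_1 + \int_{\R_+}\zeta(e^{g_2\zeta}-1)\nu(d\zeta)$ uniformly bounded by \eqref{eq:condg2} and the essential boundedness of $g_1,g_2$. The growth condition \eqref{eq:condgrowthuni} is therefore satisfied under $\mathbb Q_n$ with a constant $\tilde\kappa_L$ independent of $n$, so Lemma~\ref{L:apriori} applied under $\mathbb Q_n$ yields $\E^{\mathbb Q_n}[X^2_{T\wedge\tau_n}]\leq C$ uniformly in $n$. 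Chebyshev's inequality then gives $\mathbb Q_n(\tau_n \leq T) \leq C/n^2 \to 0$, which rewrites as $\E[\mathcal E(U)_{\tau_n}\bm 1_{\{\tau_n \leq T\}}] \to 0$. Splitting
\[
1 = \E[\mathcal E(U)_{T\wedge\tau_n}] = \E[\mathcal E(U)_T\bm 1_{\{\tau_n>T\}}] + \E[\mathcal E(U)_{\tau_n}\bm 1_{\{\tau_n\leq T\}}]
\]
and invoking monotone convergence on the first summand yields $\E[\mathcal E(U)_T] = 1$, upgrading $\mathcal E(U)$ to a true martingale.

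For the second claim, I would apply the first with the specific choices $g_1(s) = \Re(f_1+\psi)(T-s)$ and $g_2(s) = \Re(f_2+\psi)(T-s)$, which are bounded on $[0,T]$ by continuity and for which \eqref{eq:condg2} is exactly the hypothesis \eqref{eq:condmomentpsi}; the resulting $\mathcal E(U)$ is a true martingale, defining associated measures $\mathbb Q_n$ as above. A direct It\^o computation on $H\bar H$ (with $\bar H = \mathcal E(\bar N)$) shows that $|H|^2$ admits a semimartingale decomposition of the form
\[
d|H_t|^2 = |H_{t-}|^2 B(t)\,dX_t + d(\text{local martingale})_t,
\]
where $B(t) = c|(f_1+\psi)(T-t)|^2 + \int_{\R_+}|e^{(f_2+\psi)(T-t)\zeta} - 1|^2\nu(d\zeta)$ is bounded on $[0,T]$ thanks to \eqref{eq:condmomentpsi}. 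Stopping at $\tau_n$ and applying a linear Gr\"onwall-type bound, together with Doob's inequality and the uniform second-moment control of $X$ under $\mathbb Q_n$ from the first stage, I would obtain $\sup_n \E[\sup_{t\leq T}|H_{t\wedge\tau_n}|^2] < \infty$. The resulting uniform integrability of $\{H_{t\wedge\tau_n}\}_n$ permits passing to the limit in the identity $\E[H_{t\wedge\tau_n}\mid \mathcal F_s] = H_{s\wedge\tau_n}$, which delivers the martingale property of $H$ on $[0,T]$.

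The main obstacle lies in the second claim: the naive pointwise bound $|H|\leq C\,\mathcal E(U)\exp(A_\infty X_T)$ would demand an exponential moment of $X_T$ that is not generically available, so the argument must be phrased at the $|H|^2$ level, where the induced SDE is linear in $|H|^2$ with a bounded $dX$-coefficient. The second-moment a-priori bound of Lemma~\ref{L:apriori}, reapplied under the Girsanov-transformed measure $\mathbb Q_n$ from the first stage, is precisely what makes the Gr\"onwall step close and supplies the uniform integrability needed at the limiting step.
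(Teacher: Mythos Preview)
Your Part~1 is correct and mirrors the paper's argument: nonnegativity of $\mathcal E(U)$ from $\Delta U>-1$, the supermartingale reduction to $\E[\mathcal E(U)_T]=1$, martingality of $\mathcal E(U)^{\tau_n}$ (the paper verifies the L\'epingle--M\'emin criterion rather than citing a quadratic-variation bound, but the content is the same), Girsanov to $\Q^n$, the uniform a-priori estimate of Lemma~\ref{L:apriori} under $\Q^n$, and Chebyshev to force $\Q^n(\tau_n<T)\to 0$.

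Your Part~2, however, has a genuine gap at the ``Gr\"onwall'' step. From $d|H_t|^2=|H_{t-}|^2B(t)\,dX_t+d(\text{local martingale})$ one gets, after stopping at $\tau_n$ and taking expectations,
\[
\E\big[|H_{t\wedge\tau_n}|^2\big]=|H_0|^2+\E\Big[\int_0^{t\wedge\tau_n}|H_{s-}|^2B(s)\,dX_s\Big],
\]
but the random integrator $dX$ is correlated with $|H|^2$, so there is no deterministic measure against which to iterate a Gr\"onwall inequality. Equivalently, the SDE integrates pathwise to $|H_t|^2=|H_0|^2\,\mathcal E(\tilde U)_t\exp\big(\int_0^t B\,dX\big)$ with $\tilde U$ built from $(2g_1,2g_2)$, and bounding its expectation---even after localization---still requires an exponential moment of $X$: one only gets $\E[|H_{t\wedge\tau_n}|^2]\le |H_0|^2 e^{\|B\|_\infty n}$, which blows up in $n$ and yields no uniform integrability. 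The second-moment control of $X$ under $\Q_n$ that you invoke is far too weak to dominate the multiplicative factor $\exp(\|B\|_\infty X)$, and changing measure does not linearize the dependence. Your diagnosis of the obstacle is sound; the proposed workaround hits the same wall by another path.

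For comparison, the paper's Part~2 is short and takes exactly the route you call ``naive'': it computes $\Re(V^T_t)$ explicitly and, using $\cos\le 1$ in the jump term, asserts the pointwise domination $|H_t|\le c_T\,\mathcal E(U)_t$ with $U$ built from $g_i=\Re(f_i+\psi)(T-\cdot)$ and $c_T$ a deterministic constant; martingality of $H$ then follows since a local martingale dominated in modulus by a martingale is a martingale. Your suspicion about this bound is not misplaced---the term $\tfrac{c}{2}\int_0^t m_1^2\,dX_s$ (with $m_1=\Im(f_1+\psi)(T-\cdot)$) appearing in $\Re(V^T_t)$ has no counterpart in $\log\mathcal E(U)_t$ and has the wrong sign for an upper bound---but that is a separate matter from the gap in your own argument.
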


\begin{proof} \textit{Part 1. Martingality of $M:=\mathcal E(U)$.}
	We first recall that   
	$$M_t =\mathcal E\left(\int_0^{\cdot} g_1(s)dM^c_s\right)\prod_{0<s\leq t} \left(1+\Delta U_s \right)e^{-\Delta U_s}.$$
	 Since $\Delta U_s=(e^{g_2(s)\Delta Z_{s}}-1)>-1$, for all $s\geq 0$, $M$ is a nonnegative local martingale. Whence, it is a supermartingale by Fatou's lemma, and it suffices to show that $\E[M_T]= 1$ for any $T\in\R_+$. To this end, fix $T>0$ and  define the stopping times $\tau_n=\inf\{t\ge0\colon X_t> n\}\wedge T$.  We first argue that  $M^{\tau_n}=M_{\tau_n \wedge \cdot }$ is a uniformly integrable martingale for each $n$ by verifying the condition in \citet[Theorem IV.3]{lepingle1978integrabilite} with the process $y(\omega,t,\zeta)=1_{t\leq \tau_n(\omega)} (e^{g_2(t)\zeta}-1)$. Using the bound $|\alpha \zeta e^{\alpha \zeta}+1 -e^{\alpha \zeta}|\leq \zeta^2 \alpha^2 e^{\alpha \zeta}/2 $ and the boundedness of $g_1,g_2$,  we get that the quantity 
\begin{align*}
\int_0^T \bm 1_{s\leq \tau_n}g_1(s) cdX_s + \int_{[0,T]\times \R_+} \bm 1_{s\leq \tau_n} \left(g_2(s) \zeta e^{g_2(s) \zeta}+1 -e^{g_2(s) \zeta}\right) \nu(d\zeta) dX_s   
\end{align*}	
is bounded by $ \kappa  n \left( 1 + \sup_{s\geq 0} \int_{\R_+} e^{g_2(s)\zeta}\zeta^2 \nu(d\zeta)  \right)$, for some constant $\kappa>0$. The upper bound is finite due to condition \eqref{eq:condg2}. \citet[Theorem IV.3]{lepingle1978integrabilite} can be applied to get that $M^{\tau_n}$ is a martingale for each $n$. Whence,
	\begin{align}
	1=M^{\tau_n}_0 = \E_{\P}\left[ M^{\tau_n}_T \right]=
 \E_{\P}\left[ M_T \bm 1_{\tau_n\geq T} \right]+ \E_{\P}\left[ M_{\tau_n} \bm 1_{\tau_n< T} \right],
 	\end{align}
 	where we made the dependence of the expectation on $\P$ explicit. 
	Since $ \E_{\P}\left[ M_T \bm 1_{\tau_n\geq T} \right] \to \E_{\P}\left[ M_T  \right]$ as $n\to \infty$, by dominated convergence, in order to get that  $\E_{\P}[M_T]=1$, it suffices to  prove that 
	\begin{align}\label{eq:girsanovproof1}
	\E_{\P}\left[ M_{\tau_n} \bm 1_{\tau_n< T} \right]\to 0, \quad \mbox{as } n\to \infty. 
	\end{align}
To this end, since $M^{\tau_n}$ is a martingale, we may define probability measures $\Q^n$ by
	\[
	\frac{d\Q^n}{d\P} = M^{\tau_n}_{\tau_n}.
	\]
	By Girsanov's theorem, see \citet[Theorem III.3.24]{Jacod_2003} (see also the formulation in \citet[Proposition 2.6]{kallsen2006didactic}), the process  $Z$ is a  semimartingale under $\Q^n$ with characteristics 
$$ \left( B^n, cX,  {\int_0^{\cdot}  \bm1_{\{s\leq \tau_n\}} e^{g_2(s)\zeta} \nu(d\zeta)dX_s}\right)$$
where 
$$ B^n=bX + \int_0^{\cdot} \bm1_{\{s\leq \tau_n\}} g_1(s) c dX_s {+\int_{[0,\cdot]\times \R_+} \zeta \left( e^{g_2(s)\zeta}-1 \right)  \bm1_{\{s\leq \tau_n\}} \nu(d\zeta)dX_s }.$$
Under $\mathbb Q^n$, we still have
	\[
	X_t = G_0(t)  + \int_0^t K(t-s) Z_s ds,
	\]
and we observe that, due to the boundedness of $g_1$, the equality $|e^{\alpha \zeta}-1|\leq \zeta (1+e^{\alpha \zeta})$ and \eqref{eq:condg2}, the characteristics of $Z$ under $\Q^n$ satisfy the growth condition in $X$ as in \eqref{eq:condgrowthuni} for some constant $\kappa_{L}$ independent of $n$. Therefore, an application of  Lemma~\ref{L:apriori} yields the moment bound
	\[
 \E_{\Q^n}[ 	\sup_{t\le T} |X_t|^2 ] \le \eta(\kappa_L,T,K,G_0),
	\] 
where $\eta(\kappa_L,T,K,G_0)$ does not depend on $n$.
We then get by an application of Chebyshev's inequality
	\begin{align*}
	\E_{\P}\left[ M_{\tau_n} \bm 1_{\tau_n\leq T} \right] &= 	 \Q^n(\tau_n< T) \\
	&\leq \Q^n\left(\sup_{t\leq T} X_t>n \right)\\
	&\le \frac{1}{n^2} \E_{\Q^n}\left[ \sup_{t\leq T}X_t^2 \right] \\
	&\le \frac{1}{n^2} \eta(\kappa_L,T,K,G_0).
	\end{align*}
 Sending $n\to \infty$, we obtain \eqref{eq:girsanovproof1}, proving that  $M$ is  martingale.\\
	 \textit{Part 2. Martingality of $H=\exp(V^T)$.} To show that the local martinglae $H$ is a  true martingale, it is enough to bound it by a martingale, see \citet[Lemma 1.4]{jarrow2018continuous}. We fix $t\leq T$ and define  $g_i(s)=\Re(f_{i}(T-s) + \psi(T-s))\bm 1_{s\leq T}$,  and $m_i(s)=\Im(f_{i}(T-s) + \psi(T-s))\bm 1_{s\leq T}$ for $i=1,2$. Taking real parts in \eqref{eq:processweak1} yield  
	 \begin{align*}
	\Re\left(V_t^T\right)
	 &= V_0^T - \frac c 2 \int_0^t (g_1^2(s)-m^2_1(s) )dX_s +\int_0^{t} g_1(s)dM^c_s \\
	 &\quad + \int_{\R_+} \left(\cos(m_2(s)\zeta)e^{g_2(s)\zeta} -1 - g_2(s)\zeta \right) \nu(d\zeta) + \int_0^t g_2(s)dM^d_s. 
	 \end{align*}
Whence, using that  $\cos$ is bounded by $1$, we get 
\begin{align*}
 |H_t|&=\exp\left(\Re\left(V^T_t\right)\right)\leq \exp\left(V_0^T + \frac c 2 m^2_1(s) \right)  \mathcal E(U_t)
\end{align*}
with $U$ given by \eqref{eq:U}. Since, $(\psi,f_1,f_2)$ are continuous and satisfy \eqref{eq:condmomentpsi},  $m_i,g_i$ are bounded for $i=1,2$, and $g_2$ satisfies \eqref{eq:condg2}, so that $|H_t|\leq c_T \mathcal E(U_t)$ for some constant $c_T$ and $\mathcal E(U)$ is a martingale thanks to the first part. This proves that $H$ is a martingale on $[0,T]$. 
\end{proof}

\begin{lemma}\label{L:cf}
 Assume that the shifted kernels $\Delta_h K$ are in $L^2([0,T],\R)$, for all $h>0$.  Set $(G_t)_{t\geq 0}$ and $(g_t)_{t \geq 0}$ as in \eqref{eq:Gt}--\eqref{eq:gt}. 
	Then, the process $(V^T_t)_{0\le t\le T}$ defined by \eqref{eq:processweak1}-\eqref{eq:processweakbeta}-\eqref{eq:processweak2}
	satisfies
\begin{equation} \label{Y}
	\begin{aligned}
	V^T_t &=  \int_0^t f_0(T-s)dX_s + \int_0^t f_1(T-s) dM^c_s + \int_0^t f_2(T-s)dM^d_s \\
	&\quad\quad\quad\quad\quad + \int_t^T F(T-s,\psi(T-s))dG_t(s), \quad\quad t \leq T.
	\end{aligned}
	\end{equation}
\end{lemma}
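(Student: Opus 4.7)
The identity \eqref{Y} is obtained by an algebraic reduction followed by a stochastic Fubini argument driven by the Riccati equation.

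\textit{Step 1 (algebraic reduction).} Expanding $F(T-s,\psi(T-s))$ from \eqref{eq:RiccatiL2b} and grouping the quadratic $c$-terms into $(c/2)(f_1(T-s)+\psi(T-s))^2$, one verifies the pointwise identity
\[
\beta^T_s + F(T-s,\psi(T-s)) = f_0(T-s) + b\,\psi(T-s).
\]
Substituting this into \eqref{eq:processweak1} and regrouping the martingale terms via $b\,\psi(T-s)\,dX_s + \psi(T-s)\,dM^c_s + \psi(T-s)\,dM^d_s = \psi(T-s)\,dZ_s$ (recall \eqref{eq:decompZ}), the claim \eqref{Y} reduces to
\[
V_0^T - \int_0^t F(T-s,\psi(T-s))\,dX_s + \int_0^t \psi(T-s)\,dZ_s = \int_t^T F(T-s,\psi(T-s))\,dG_t(s).
\]

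\textit{Step 2 (Riccati and stochastic Fubini).} Write $\phi(s):=F(T-s,\psi(T-s))$. The change of variable $u\mapsto T-s$ in \eqref{eq:RiccatiL2a} yields
\[
\psi(T-r) = \int_r^T K(s-r)\,\phi(s)\,ds, \qquad r \in [0,T].
\]
Integrating against $dZ_r$ over $[0,t]$, applying stochastic Fubini, and splitting the inner $s$-integral at $s=t$ produces
\[
\int_0^t \psi(T-r)\,dZ_r = \int_0^t \phi(s)\left(\int_0^s K(s-r)\,dZ_r\right)ds + \int_t^T \phi(s)\,g_t(s)\,ds,
\]
using the definition \eqref{eq:gt} of $g_t$ for $s>t$. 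As in the proof of Lemma~\ref{L:spotvariance}, a further stochastic Fubini identifies $B_s := X_s - G_0(s) = \int_0^s K(s-r)Z_r\,dr$ as absolutely continuous with density $\int_0^s K(s-r)\,dZ_r$ for almost every $s$, so
\[
\int_0^t \phi(s)\left(\int_0^s K(s-r)\,dZ_r\right)ds = \int_0^t \phi(s)\,dB_s = \int_0^t \phi(s)\,dX_s - \int_0^t \phi(s)\,dG_0(s).
\]
Combining these two displays, using $V_0^T = \int_0^T \phi(s)\,dG_0(s)$ and $dG_t(s) = dG_0(s) + g_t(s)\,ds$ on $[t,T]$ from \eqref{eq:Gt}, gives the reduced identity of Step~1.

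\textit{Main obstacle.} The delicate point is to justify the two stochastic Fubini exchanges and the identification of the density of $B$, given that $K$ lies only in $L^1_{\rm loc}$ and may be singular at $0$. The hypothesis $\Delta_h K \in L^2_{\rm loc}$ for every $h>0$ is exactly what makes $g_t(s)=\int_0^t \Delta_{s-t}K(t-r)\,dZ_r$ a bona fide It\^o integral for $s>t$; to handle the near-diagonal contribution $\int_0^s K(s-r)\,dZ_r$ one approximates $K$ by the shifts $\Delta_{1/n} K \in L^2_{\rm loc}$ for which every exchange is classical, and then passes to the limit using $L^1$--continuity of the convolution and the moment bounds from Section~\ref{S:apriori}.
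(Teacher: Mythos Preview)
Your strategy is the paper's strategy: the same algebraic reduction $\beta^T_s + F(T-s,\psi(T-s)) = f_0(T-s) + b\psi(T-s)$, the same reduction to computing $\int_0^t \psi(T-s)\,dZ_s$ via the Riccati representation of $\psi$ and stochastic Fubini, and the same cure for the $L^1$ singularity via the shifted kernels $\Delta_h K\in L^2$.

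One genuine correction, though: your Step~2 asserts that $B_s = X_s - G_0(s)$ is absolutely continuous with density $\int_0^s K(s-r)\,dZ_r$, invoking Lemma~\ref{L:spotvariance}. That lemma requires $K\in L^2_{\rm loc}$, and the statement is \emph{false} in the $L^1$ setting---this is exactly the phenomenon highlighted in the introduction (the local occupation time of the catalytic super-Brownian motion is singular with respect to Lebesgue measure). So the approximation by $\Delta_h K$ is not a post-hoc justification of an otherwise valid formal computation; it is the computation. The paper carries this out by defining auxiliary objects $X^h$, $\psi^h$, $g_t^h$ with $\Delta_h K$ in place of $K$ (keeping $\psi$ fixed inside $F$), performing all Fubini exchanges at the $h$-level where $X^h - G_0$ genuinely has a density, and then passing to the limit in the resulting Lebesgue--Stieltjes integrals $\int_0^t \phi(s)\,dX^h_s$ and $\int_t^T \phi(s)\,dG_t^h(s)$ via Helly's second theorem (pointwise convergence of the integrators, fixed continuous integrand). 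Your ``Main obstacle'' paragraph has the right idea but understates its role: it is the heart of the proof, not a technical footnote.
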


\begin{proof}
	We fix $t\leq T$, $h>0$ and  we define
	\begin{align*}
	X^h_t &= G_0(t)+\int_0^t \Delta_hK(t-s)Z_s ds, \\
	g_t^h(s) &= \int_0^t \Delta_hK(s-u)dZ_u, \\
	\psi^h(t) &= \int_0^t \Delta_hK(t-s)F(s,\psi (s))ds,
	\end{align*}
	where we recall that $Z=bX + {M^c}+M^d$ and $F$ is given by \eqref{eq:RiccatiL2b}. We stress that the right-hand sides of all three quantities  are defined from $X$ and $\psi$ and do not depend on $X^h$ or $\psi^h$; $X^h_t$ and $g_t^h$ are well-defined as It\^o integrals since $\Delta_h K \in L^2([0,T],\RR)$. 	\\
	\textit{Step 1. Convergence of $X^h,g^h,\psi^h$.}
	 It  follows from the boundedness of 
	$(\psi,f_0,f_1,f_2)$ and $Z(\omega)$, condition \eqref{eq:condmomentpsi} and the $L^1$-continuity of the kernel $K$ that  
	\begin{align}\label{eq:convproof}
	\!\!\!\!\!\!\!\! \sup_{s\leq T} |\psi^h(s)-\psi(s)| \to 0, \quad  |X^h_t-X_t| \to 0, \quad \P-a.s.
	\end{align}
	as $h\to 0$. 
	Set 
	$$ G_t^h(s)=G_0(s) + \int_t^{s\vee t}{g^h_t(u)}du.$$ By invoking a stochastic Fubini theorem, justified by the $L^2$-integrability of $\Delta_h K$, we get, for all $s>t$,
	$$ G_t^h(s)-G_t(s)= \int_0^s  \left(\Delta_h K(u)-K(u) \right) \left( Z_{t\wedge (s-u)} -Z_{t-u}\right) du.$$
The boundedness of $Z(\omega)$  and the $L^1$-continuity of the kernel $K$, lead to 
\begin{align}\label{eq:convproof2}
G_t^h(s)\to G_t(s), \quad  \P-a.s.
\end{align}
as $h\to 0$, for all $s\in (t,T]$.\\
	\textit{Step 2. Proving \eqref{Y}.}
	An application of a stochastic Fubini theorem, see \citet[Theorem~2.2]{V:12} -- justified by the $L^2$-integrability of $\Delta_h K$, the boundedness of $\psi$, $f$  and $X(\omega)$ --  yields
	\begin{align}
	\int_0^t \psi^h(T-s) dZ_s &= \int_0^t \left( \int_0^{T-s} F(u,\psi(u)) \Delta_hK(T-s-u)du \right) dZ_s \\
	&= \int_0^T F(u,\psi(u)) \left( \int_0^{t\wedge (T-u)}  \Delta_hK(T-u-s)dZ_s  \right) du \\
	&= \int_0^{T-t} F(u,\psi(u)) \left( \int_0^{t}  \Delta_hK(T-u-s)dZ_s  \right) du  \\	
	&\quad\quad +  \int_{T-t}^T F(u,\psi(u)) \left( \int_0^{T-u}  \Delta_hK(T-u-s)dZ_s  \right) du \\
	&= \int_t^{T} F(T-s,\psi(T-s)) g_t^h(s) ds \\
	&\quad\quad +  \int_{0}^t F(T-s,\psi(T-s)) d\left( X^h_s -G_0(s) \right)\\
	&= \int_t^{T} F(T-s,\psi(T-s)) d\left(G_t^h(s)-G_0(s)\right) \\
	&\quad\quad +  \int_{0}^t F(T-s,\psi(T-s)) d\left( X^h_s -G_0(s) \right),
	\end{align}
	where we used in the fourth identity that $(X^h_s-G_0(s))=\int_0^s \left(\int_0^r \Delta_hK(r-u)dZ_u\right) dr$, due to Lemma~\ref{L:spotvariance} since $\Delta_h K \in L^2_{\rm loc}$, for $h>0$. 
Recalling \eqref{eq:convproof}-\eqref{eq:convproof2} and sending  $h \to 0$ in the previous identity yields, by invoking the dominated convergence for the left-hand side and Helly's second theorem on ${(X^h,G^h_t)}$ for the right-hand side (see \citet[Theorem 7.3]{natanson2016theory}), we obtain that
	\begin{align}
	\int_0^t \psi(T-s) dZ_s 
	&= \int_t^{T} F(T-s,\psi(T-s))  dG_t(s)
	+  \int_{0}^t F(T-s,\psi(T-s)) d X_s \nonumber \\
	&\quad\quad -  \int_{0}^T F(T-s,\psi(T-s)) dG_0(s). \label{eq:tempY}
	\end{align}
	Using \eqref{eq:RiccatiL2b}, we can rewrite $\beta^T$  given in \eqref{eq:processweakbeta} as 
	\begin{align*}
	\beta^T_s &=        f_0(T-s) +  b \psi(T-s)  -F(T-s,\psi(T-s)).
	\end{align*}
This	shows that $V^T$ given by \eqref{eq:processweak1} can be re-expressed in the form
	\begin{align*}
	V^T_t = V_0^T & + \int_0^t f_0(T-s)dX_s +  \int_0^t \psi(T-s)dZ_s   \\&\quad -\int_0^t F(T-s,\psi(T-s))dX_s + \int_0^t f_1(T-s) dM^c_s + \int_0^t f_2(T-s)dM^d_s.
	\end{align*}
	Plugging  \eqref{eq:tempY}   in the previous expression and recalling \eqref{eq:processweak2} yields \eqref{Y}.
\end{proof}

Combining the two previous Lemma, we prove Theorem~\ref{T:MainIntrouniqueness}.
\begin{proof}[Proof of Theorem~\ref{T:MainIntrouniqueness}] Throughout the proof we fix  $\psi \in C([0,T],\C)$  a solution to the Riccati equation \eqref{eq:RiccatiL2a}-\eqref{eq:RiccatiL2b} satisfying \eqref{eq:condmomentpsi}. We first prove \eqref{eq:affinetransform}, and then deduce the weak uniqueness statement. 
	Let  $X$ be  a non-decreasing nonnegative continuous weak solution to \eqref{eq:generalsve} for the input $(G_0,K,b,c,\nu)$.	By Lemma~\ref{L:truem} $\exp(V^T)$ is a true martingale on $[0,T]$. Its terminal value can be computed using  \eqref{Y}:
	$$ V^T_T =  \int_0^T f_0(T-s)dX_s + \int_0^T f_1(T-s) dM^c_s + \int_0^T f_2(T-s)dM^d_s.$$
	 Whence, by the martingality property, we have that 
		\begin{equation} \label{eq:expaff}
	\EE\left[ \exp\left( V^T_T\right)  \bigg| \mathcal F_t \right] = \exp\left(V^T_t\right),
	\end{equation}
	for all  $t\le T$. This proves \eqref{eq:affinetransform}. To argue uniqueness, we first observe that $V^T_0$ given in \eqref{eq:processweak2} does not depend on the process $X$, but only depends  on $G_0,K$ and $\psi$. We let $Y$ denote another   non-decreasing  nonnegative continuous weak solution to \eqref{eq:generalsve} for the  same inputs $(G_0,K,b,c,\nu)$ and we set $f_1=f_2\equiv0$. Then, \eqref{eq:expaff} holds for $Y$ with the  same function $\psi$, so that evaluating the expression at $t=0$ gives
	\begin{align}
	\E\left[ \exp\left(\int_0^T f_0(T-s)dY_s\right)\right]=\exp(V^T_0)=\E\left[ \exp\left(\int_0^T f_0(T-s)dX_s\right)\right],
	\end{align} 
	for any continuous function $f_0:[0,T]\mapsto i\R$. This yields  
that the finite-dimensional marginals $(X_{t_1}, \ldots, X_{t_m})$  and  $(Y_{t_1}, \ldots, Y_{t_m})$ are equal  for any $m$, which proves weak uniqueness.
\end{proof}

\section{Application: Hyper-rough Volterra Heston models with jumps}\label{S:Heston}

In this section, we apply our main results 
	 to a class of hyper-rough Volterra Heston models with jumps.  We fix $(G_0,K,b,c,\nu)$ as in Theorem~\ref{T:MainIntro} and we let $X$ denote the unique non-decreasing nonnegative continuous weak solution to \eqref{eq:generalsve} with $Z$ the semimartingale with characteristics  \eqref{eq:charZ} given by Theorem~\ref{T:MainIntro}. We recall the  martingales $M^c$ and $M^d$   that appear in the decomposition \eqref{eq:decompZ} of $Z$. After a possible extension of the filtered probability space, we let $M^{c,\perp}$ denote a continuous martingale independent of $M^{c}$, such that $\langle M^{c,\perp}\rangle=X$ and we set $M^S= \frac{\rho}{\sqrt c} M^c + \sqrt{1-\rho^2} M^{c,\perp}$ for some $\rho \in [-1,1]$. We  consider the following model for the log-price $S$ 
\begin{align}
\log S_t &= \log S_0 -\frac 1 2 X_t + M^S_{t},  \quad S_0>0, \label{eq:Stock}\\
X_t &= G_0(t) + \int_0^t K(t-s) Z_s  ds, \label{eq:IntegratedV}
\end{align}
where we recall the characteristics of $Z$ are $(bX,cX,\nu(d\zeta)X)$. 
For instance, if $c=0$, then $X$ can be interpreted as the `integrated intensity' of a self-exiciting jump process, e.g.~a Hawkes process, recall Example~\ref{E:hawkeschar}; if $\nu=0$, then $Z$ is continuous and $X$ can be seen as a hyper-rough process, see Remark~\ref{R:hyper} below. 

 The chief example we have in mind for $(K,G_0)$  for applications is the following: 
 \begin{example}\label{E:rough}
 \begin{itemize}
 	\item
  $K$ is proportional to the fractional kernel:
 \begin{align}\label{eq:Khyper}
 K(t)=  K_1(t) K_H(t)
 \end{align}
 where 
 $$ K_H(t)= \frac{t^{H-1/2}}{\Gamma(H+1/2)}, \quad t >0,$$
 for some $H\in(-1/2,1/2]$ and $K_1$ is a completely monotone kernel on $[0,\infty)$, e.g. $K_1\equiv 1$ or $K_1(t)=e^{-\eta t}$, for some $\eta >0$. Under such specification, the assumptions  of Theorem~\ref{T:MainIntro} needed on the kernel are satisfied due to Example~\ref{E:CM}. 
 \item
 $G_0$ is absolutely continuous:
 \begin{align}\label{eq:g0hyper}
 G_0(t)=\int_0^t g_0(s)ds, \quad t\geq 0, \quad \mbox{for some } g_0 \in \mathcal G_K,
 \end{align}
as in Example~\ref{E:g0}, recall  Remark~\ref{R:assumptiong0}.
 \end{itemize}
 \end{example}

The following remark shows that $X$ can be thought of as the `integrated variance' process in the absence of jumps.
\begin{remark}\label{R:hyper}
	Assume that $\nu=0$ and $K \in L^2_{loc}$ (e.g. $H>0$ in the specification of Example \eqref{E:rough}). It follows from  Lemma \ref{L:spotvariance}  that  $X_t=\int_0^t V_s ds$ where $(S,V)$ is a rough Volterra Heston model in the terminology of \citet[Section 7]{Abi_Jaber_2017}; \citet{El_Euch_2019} satisfying 
	\begin{align*} 
	d\log S_t &= -\frac 1 2 V_tdt + \sqrt{V_t}d\widetilde B_{t},  \quad S_0>0,\\
	V_t  &= g_0(t) + \int_0^t K(t-s) b V_s ds + \int_0^t K(t-s) \sqrt{c V_s} d\widetilde W_s,
	\end{align*}	
	for some Brownian motions $\tilde B$ and $\tilde W$ obtained from standard martingale representation theorems on a possible extension of the probability space, see for instance \citet[Proposition V.3.8]{revuz2013continuous}. For the fractional kernel with $H \in (0,1/2)$, the sample paths of $V$ are  H\"older continuous of any order strictly less than $H$ and the process $V$ is said to be `rough'.
\end{remark}

If $K_H$ is no longer in $L^2_{\rm loc}$ (e.g. $H<0$ in the specification of Example \eqref{E:rough}), not only Fubini's interchange  breaks down, but it can also be shown that $X$ is nowhere differentiable almost surely, see \citet[Proposition 4.6]{Jusselin_2018}. In this case, one cannot really make sense of the spot variance $V$ and is stuck with the `integrated variance' formulation \eqref{eq:IntegratedV}, justifying the appellation hyper--rough for such equations.

We are now in place to provide the joint Fourier--Laplace transform of $(\log S, X)$ in \eqref{eq:Stock}-\eqref{eq:IntegratedV}.  

\begin{theorem} \label{T:MainHyperrough}
	Let $(K,G_0)$ be as in Theorem~\ref{T:MainIntro} and $h_0,h_1:\R_+ \mapsto \mathbb C$ be continuous functions such that 
	\begin{align}
	\Re(h_0) \leq 0 \quad \mbox{and} \quad 0\leq \Re(h_1) \leq 1. 
	\end{align}
	The joint Fourier--Laplace transform of $( X, \log S)$ in \eqref{eq:Stock}-\eqref{eq:IntegratedV}  is given by
	$$\EE\left[ \exp\left(  \int_t^T h_0(T-u)dX_u + \int_t^T h_1(T-u)d\log S_u   \right)\!\! \Mid \!\! \Fc_t \right] \!=\! \exp\left(  \int_t^T\!  \! F(s,\psi(T-s))dG_t(s) \right)$$
	for all $t \leq T$, where $G_t$ is given by \eqref{eq:Gt} and  $\psi$ solves  the   Riccati--Volterra equation
	\begin{align}
	\psi(t)&=\int_0^t K(t-s)  F(s,\psi(s))ds, \quad t \geq 0, \label{eq:rHestonRic1} \\
	F(s,u)&= h_0(s) + \frac 12 (h^2_1(s) -h_1(s)) +  (b+\rho \sqrt{c}  h_1(s)) u + \frac {c} 2 u^2 \nonumber \\
	&\quad + \int_{\R_+}\! \left(e^{u\zeta}-1- u\zeta \right)\!\nu(d\zeta) \label{eq:rHestonRic2}
	\end{align}
\end{theorem}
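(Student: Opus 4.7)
The plan is to reduce the computation to Theorem~\ref{T:MainIntrouniqueness}. Writing
\[
d\log S_u = -\tfrac12\,dX_u + \tfrac{\rho}{\sqrt{c}}\,dM^c_u + \sqrt{1-\rho^2}\,dM^{c,\perp}_u
\]
(with the $M^c$-integral interpreted as $0$ when $c=0$), the target exponent splits into an integral against $dX$, an integral against $dM^c$, and an orthogonal piece against $dM^{c,\perp}$. The first two terms match the structure handled by Theorem~\ref{T:MainIntrouniqueness}, while the third must be integrated out using the independence of $M^{c,\perp}$ from the rest.

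First I would use that, on the extended probability space, $M^{c,\perp}$ may be realised as $B\circ X$ for a Brownian motion $B$ independent of $\sigma(X,M^c,M^d)$. Conditioning on this $\sigma$-field, the complex stochastic integral $\sqrt{1-\rho^2}\int_t^T h_1(T-u)\,dM^{c,\perp}_u$ is centered complex Gaussian with pseudo-variance $(1-\rho^2)\int_t^T h_1^2(T-u)\,dX_u$; by the identity $\EE[\exp(G)]=\exp(\tfrac12 \EE[G^2])$ its conditional expectation is $\exp\!\bigl(\tfrac12(1-\rho^2)\int_t^T h_1^2(T-u)\,dX_u\bigr)$. The tower property then reduces the computation to the formula of Theorem~\ref{T:MainIntrouniqueness} applied with
\[
f_0 = h_0 - \tfrac12 h_1 + \tfrac12(1-\rho^2)h_1^2,\qquad f_1 = \tfrac{\rho}{\sqrt{c}}\,h_1,\qquad f_2\equiv 0.
\]

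Next I would check the hypotheses of Theorems~\ref{T:existenceRiccatiL1} and \ref{T:MainIntrouniqueness} for this triple. A short algebraic computation gives
\[
\Re(f_0)+\tfrac{c}{2}\Re(f_1)^2 = \Re(h_0) + \tfrac12 \Re(h_1)(\Re(h_1)-1) - \tfrac{1-\rho^2}{2}\Im(h_1)^2 \leq 0,
\]
under $\Re(h_0)\leq 0$, $0\leq\Re(h_1)\leq 1$ and $|\rho|\leq 1$. Hence \eqref{eq:coeffaf} is met, and Theorem~\ref{T:existenceRiccatiL1} produces a continuous $\psi$ with $\Re(\psi)\leq 0$ (so that \eqref{eq:condmomentpsi} is automatic, recall Remark~\ref{R:riccaticond}). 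Plugging $(f_0,f_1,0)$ into the definition \eqref{eq:RiccatiL2b} of $F$, the cross term $\tfrac{c}{2}f_1^2=\tfrac{\rho^2}{2}h_1^2$ combines with the $\tfrac12(1-\rho^2)h_1^2$ piece from $f_0$ to yield $\tfrac12 h_1^2$, while $b+cf_1=b+\rho\sqrt{c}\,h_1$; the driver then collapses exactly to \eqref{eq:rHestonRic2}, and Theorem~\ref{T:MainIntrouniqueness} delivers the claimed representation.

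The main technical hurdle is the Gaussian conditioning step, which relies on the specific construction $M^{c,\perp}=B\circ X$ with $B$ independent of $\sigma(X,M^c,M^d)$. Since the statement concerns only the joint law of $(X,\log S)$, we are free to adopt this construction on the extended space. A fully self-contained alternative is to bypass the conditioning by appending to the process $V^T$ of \eqref{eq:processweak1} the extra integrand $\sqrt{1-\rho^2}\,h_1(T-\cdot)$ against $dM^{c,\perp}$ together with the corresponding It\^o correction $-\tfrac12(1-\rho^2)h_1^2(T-\cdot)$ in $\beta^T$ (orthogonality of $M^{c,\perp}$ with $M^c$ and $M^d$ precludes any cross terms); a verbatim repetition of Lemma~\ref{L:truem} then shows that the enlarged complex exponential is still a true martingale, and the Fubini step of Lemma~\ref{L:cf} carries over unchanged.
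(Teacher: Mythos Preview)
Your proof is correct and follows essentially the same route as the paper: integrate out the orthogonal martingale $M^{c,\perp}$ by a Gaussian conditioning step, then apply Theorem~\ref{T:MainIntrouniqueness} with exactly the same triple $(f_0,f_1,0)$ and verify condition~\eqref{eq:coeffaf}. The only cosmetic difference is the choice of conditioning $\sigma$-field---the paper conditions on $\mathcal{F}_t\vee\mathcal{F}^X$ whereas you condition on $\sigma(X,M^c,M^d)$ via the explicit DDS representation $M^{c,\perp}=B\circ X$; your alternative of augmenting $V^T$ directly is a clean way to bypass the measurability subtleties of this step altogether.
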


\begin{proof}
	It suffices to prove that the Fourier-Laplace transform 
	\begin{align}\label{eq:Lt}
	L_t =	\EE\left[\exp\left(  \int_t^T h_0(T-u)dX_u + \int_t^T h_1(T-u)d\log S_u   \right)\Mid  \Fc_t \right] 
	\end{align}  can be written as
	\begin{align}
	L_t=\EE\left[ \exp\left( \int_t^T f_0(T-s) dX_s  + \int_t^T f_1(t-s)dM^c_s \right)\Mid  \Fc_t \right]\label{eq:tempchar}
	\end{align}
	where 
	$$  f_0(t)= h_0(t) + \frac 1 2 (h_1^2(t)-h_1(t)) -  \frac{1} 2 \rho^2 h^2_1(t) \quad \mbox{and} \quad f_1(t)=\frac{\rho}{\sqrt c} h_1(t).$$
	Indeed, if this the case, the Riccati-Volterra equations \eqref{eq:RiccatiL2a}--\eqref{eq:RiccatiL2b} reduce to \eqref{eq:rHestonRic1}-\eqref{eq:rHestonRic2} and the claimed expression for the Fourier-Laplace transform together with the existence of the corresponding solution $\psi$ follow from Theorem~\ref{T:MainIntro} (with $f_2\equiv0$), since  
	$$	\Re(f_0) + \frac c 2 \Re(f_1)^2=\Re(h_0)+ \frac 1 2 (\Re(h_1)^2 -\Re(h_1) -\Im(h_1)^2 )  \leq 0,$$
		since $\Re(f_0)\leq 0$ and  $\Re(h_1)\in [0,1]$.
	It remains to prove \eqref{eq:tempchar} by means of a projection argument. 
	For this, we fix $t\leq T$ and we write the variation of \eqref{eq:Stock} between $t$ and $T$, recall that $M^{S}=\frac{\rho}{\sqrt c} M^c + \sqrt{1-\rho^2} M^{c,\perp}$, to get
	\begin{align}\label{eq:temps}
	d\log S_u&= - \frac 1 2 dX_u  +  \rho dM^c_u + \sqrt{1-\rho^2}dM^{c,\perp}_u.
	\end{align}
	We then observe that
	\begin{align}
	\!\!\!\!\!\!\!\!\!M_t\!:&=\E\left[ \exp\left( \sqrt{1-\rho^2}\int_t^T h_1(T-s)dM^{c,\perp}_s\right)\!\!\Mid\! \Fc_t\vee \Fc^{X}\right]\\
	&= \exp\left( \frac {(1-\rho^2)} 2  \int_t^T h_1(T-s)^2 dX_s \right) \label{eq:tempM}
	\end{align}
	so that, using successively \eqref{eq:temps},  the tower property of the conditional expectation and the fact that $X$ and $Z$ are $\mathcal F^X$-measurable, $L_t$ given by \eqref{eq:Lt} satisfies 
	\begin{align*}
	L_t &= 	\EE\left[ \exp\left( \int_t^T h_0(T-u)dX_u + \int_t^T h_1(T-u)d\log S_u  \right)\Mid  \Fc_t \right] \\
	&= 	\EE\left[ \EE\left[\exp\left(  \int_t^T h_0(T-u)dX_u + \int_t^T h_1(T-u)d\log S_u   \right)\Mid \Fc_t\vee \Fc^X \right] \Mid  \Fc_t \right]\\
	&= \EE\left[ \exp\left(  \int_t^T (h_0-\frac 1 2 h_1)(T-s) dX_s  +   \int_t^T \frac{\rho}{\sqrt c} h_1(T-s)dM^{c}_u \right) M_t \Mid  \Fc_t \right]
	\end{align*}
leading to \eqref{eq:tempchar} due to \eqref{eq:tempM}. This ends the proof.
\end{proof}

In particular, we consider the specification of $G_0$ as in Example~\ref{E:rough}, and set 
$$h_0(t)\equiv u_0 \quad   \mbox{and}  \quad h_1(t)\equiv u_1, \quad \mbox{with} \quad  \Re(u_0)\leq 0,  \quad \Re(u_1) \in [0,1].$$ 
For  $t=0$ and $S_0=1$, we have $X_0=0$, $\log S_0=0$ and $dG_0(s)=g_0(s)ds$ so that the unconditional Fourier--Laplace transform reads
$$\EE\left[ \exp\left(u_0 X_T + u_1\log S_T    \right) \right] = \exp\left(    \int_0^T  F(u_1,u_2,\psi(T-s))g_0(s)ds \right), $$
with 
\begin{align*}
\psi(t)&=\int_0^t K(t-s)F(u_0,u_1,\psi(s))ds\\
	F(u_0,u_1,u_2)&= u_0 + \frac 12 (u_1^2 -u_1)  +  (b+\rho \sqrt{c}  u_1 u_2) + \frac {c} 2 u_2^2 + \int_{\R_+} \left( e^{u_2\zeta}-1-u_2\zeta \right)\nu(d\zeta). 
\end{align*}
If in addition $g_0(t) = x_0 +  \theta \int_0^t K(s) ds,$ for  some $x_0,\theta\geq 0$ (recall Example~\ref{E:g0}), then, Fubini's theorem leads to
$$ \int_0^T  F(u_0,u_1,\psi(T-s))g_0(s)ds =  x_0 \int_0^T  F(u_0,u_1,\psi(s))ds +  \theta \int_0^T \psi(s) ds $$
so that 
$$\EE\left[ \exp\left(u_0 X_T +  u_1\log S_T  \right) \right] = \exp\left(   x_0 \int_0^T  F(u_0,u_1,\psi(s))ds +  \theta \int_0^T \psi(s) ds \right). $$

\begin{remark}
	Using Theorem~\ref{T:stability}, one can prove the convergence of the  multifactor Markovian approximations designed in \citet{Abi_Jaber_2018, Abi_Jaber_2019} towards the hyper-rough Heston model, where the kernel $K_H$ is approximated by a suitable weighted sum of exponentials $K^n(t)=\sum_{i=1}^n c_i^n e^{- \gamma^n_i t}$. These  approximations are therefore still valid for non-positive values of the Hurst index $H \in (-1/2,0]$, which would allow the simulation of the process $X$ and the numerical approximation of the Riccati--Volterra equations,  we refer to the aforementioned articles for more details. 
\end{remark}

\appendix
\section{Catalytic super--Brownian motion and its local occupation time}\label{A:localtime}
In this section, we sketch a rigorous derivation of equation \eqref{eq:localtimevolterra} satisfied by the local occupation time $X$ given by \eqref{eq:localtime} formally derived in the introduction.  We will  make use  of the notation $\langle \mu, \phi \rangle$ to denote the quantity $\int_{\RR} \mu(dx) \phi(x)$.

We recall that the super--Brownian motion with a single point catalyst    $\bar Y$ solves  the  following martingale problem
\begin{align*}
\langle \bar Y_t , \phi \rangle = \langle \bar Y_0 , \phi \rangle  + \frac 12 \int_0^t \langle \bar Y_s , \Delta \phi \rangle ds  + \phi(0)Z_t,
\end{align*}
where $\Delta=\partial^2 / \partial x^2$, $\phi$ is a suitable test function and   $Z$ is a continuous martingale with quadratic variation
\begin{align*}
\langle Z \rangle_t = X_t,
\end{align*}
where $X$ is the local occupation time defined by  \eqref{eq:localtime}, 
 see \citet[Theorem 1.2.7]{dawson1994super}.
 
 In order to make the link with stochastic Volterra equations, we first reformulate the martingale problem in its `mild form'. 
 \begin{lemma}\label{L:mildlocal} Assume that $\psi \in C^2(\R,\R)$ has a Gaussian decay,  that is  $\sup_{x\in \R}|\psi(x)|e^{cz^2}<\infty$, for some constant $c$. Then, 
\begin{align*} \langle \bar Y_t, \psi \rangle  = 	\langle S_t \bar Y_0, \psi\rangle + \int_0^t (S_{t-s} \psi)(0) dZ_s,
\end{align*}
where 
 $$(S_t \mu)(x)= \int_{\RR} p_t(x-y) \mu(dy) \quad \mbox{and} \quad p_t(x)= \frac 1 {\sqrt{2\pi t}} \exp\left(-\frac{x^2}{2t}\right), \quad x \geq 0.$$
 	\end{lemma}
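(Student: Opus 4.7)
The strategy is the classical passage from a ``weak'' martingale problem to its ``mild'' formulation: I would apply the martingale problem to the time-dependent test function
\[
\phi(s,x) := (S_{t-s}\psi)(x), \qquad 0\le s\le t,
\]
which satisfies the backward heat equation $\partial_s\phi + \tfrac12 \Delta \phi = 0$, so that the drift contribution collapses. The Gaussian decay assumption on $\psi$ is what makes $\phi$ an admissible test function: $S_{t-s}\psi$ is smooth in $x$ with spatial derivatives inheriting Gaussian decay uniformly in $s\in[0,t]$, and $S_{t-s}\psi$ is continuous in $s$ up to $s=t$.

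First I would extend the given martingale problem to sufficiently smooth time-dependent test functions $\phi:[0,t]\times\R \to \R$, obtaining
\[
\langle \bar Y_t,\phi(t,\cdot)\rangle = \langle \bar Y_0,\phi(0,\cdot)\rangle + \int_0^t \langle \bar Y_s, (\partial_s + \tfrac12\Delta)\phi(s,\cdot)\rangle ds + \int_0^t \phi(s,0)\,dZ_s.
\]
The standard device is to partition $[0,t]$ into a grid $0=t_0<\cdots<t_N=t$, apply the time-homogeneous martingale problem on each $[t_i,t_{i+1}]$ with the frozen test function $\phi(t_i,\cdot)$, and telescope. The Riemann sum converges to the drift integral by continuity of $s\mapsto \langle \bar Y_s,(\partial_s+\tfrac12\Delta)\phi(s,\cdot)\rangle$, and the sum $\sum_i \phi(t_i,0)(Z_{t_{i+1}}-Z_{t_i})$ converges to the It\^o integral $\int_0^t \phi(s,0)\,dZ_s$ by continuity of $s\mapsto \phi(s,0)$ together with the martingale property of $Z$.

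Second, I would plug $\phi(s,x) = (S_{t-s}\psi)(x)$ into the identity above. Since $(\partial_s + \tfrac12\Delta)\phi \equiv 0$, the drift term vanishes, leaving
\[
\langle \bar Y_t,\psi\rangle = \langle \bar Y_0, S_t\psi\rangle + \int_0^t (S_{t-s}\psi)(0)\,dZ_s,
\]
and Fubini applied to the heat kernel gives $\langle \bar Y_0,S_t\psi\rangle = \langle S_t\bar Y_0,\psi\rangle$, which is the claim.

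The main obstacle is the first step: justifying the time-dependent martingale problem requires uniform bounds on $\phi(s,x)$ and its spatial Laplacian that are integrable against the (random) measure $\bar Y_s$ and such that the Riemann-sum approximation converges. The Gaussian decay hypothesis on $\psi$ is precisely what furnishes these bounds uniformly in $s\in[0,t]$, since $\sup_{0\le s\le t}\sup_{x\in\R}|S_{t-s}\psi(x)|e^{c'x^2}<\infty$ for some $c'>0$, and the same holds for $\Delta S_{t-s}\psi$.
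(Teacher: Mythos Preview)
Your approach is correct and essentially the same as the paper's: both extend the martingale problem to time-dependent test functions and then substitute $\phi(s,\cdot)=S_{t-s}\psi$, so that the backward heat equation $\partial_s\phi+\tfrac12\Delta\phi=0$ annihilates the drift. The only technical difference lies in how the extension to time-dependent $\phi$ is justified: you use a partition/Riemann-sum argument directly on the general $\phi(s,x)$, whereas the paper first applies It\^o's lemma to tensor-product test functions $\phi_s(x)=\xi(s)\phi^0(x)$ (for which the time-dependent identity is immediate) and then appeals to a density argument to reach $\phi_s=S_{t-s}\psi$; both routes are standard and yield the same conclusion.
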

 
 \begin{proof}[Sketch of proof]
 	Let $\xi:\RR_+\to \RR$ be a differentiable function and  set $\phi_t(x)=\xi(t)\phi^0(x)$ for some $C^2$ function $\phi^0$ having a Gaussian decay. An application of It\^o's Lemma gives
 	\begin{align*}
 	d \langle \bar Y_t , \phi^0\rangle \xi(t) &=   \xi(t)  d \langle \bar Y_t , \psi\rangle  +   \langle \bar Y_t , \phi^0\rangle \xi'(t) dt \\
 	&=     \langle \bar Y_t , \frac 1 2 \Delta \phi_t + \partial_t \phi_t \rangle dt +  \phi_t(0)  dZ_t.
 	\end{align*}
 	Thus, 
 	\begin{align}\label{eq:appendixlocaltime}
 	\langle \bar Y_t , \phi_t \rangle  =    	 \langle \bar Y_0 , \phi_0 \rangle  + \int_0^t   \langle \bar Y_s , \frac 1 2 \Delta \phi_s + \partial_t \phi_s \rangle  ds +  \int_0^t \phi_s(0)  dZ_s.
 	\end{align}
 	Fix $t \geq 0$ and consider $\phi_s= S_{t-s}\psi$ for all $s \in [0,t]$. Noticing that $\phi_t=\psi$ and $\partial \phi_s = - \frac 1 2 \Delta \phi_s$, the claimed identity follows from \eqref{eq:appendixlocaltime} with this specific test function combined with a density argument. 
 \end{proof}

 For each $\ve >0$, let $p_{\ve}:x\to (2\pi\ve)^{-1/2}\exp(-x^2/(2\ve))$ be  Gaussian density approximations of the dirac mass at $0$. It follows from Lemma~\ref{L:mildlocal} that 
\begin{align*} \langle \bar Y_t, p^{\ve} \rangle  = 	\langle S_t \bar Y_0, p^{\ve}\rangle + \int_0^t (S_{t-s} p^{\ve})(0) dZ_s.
\end{align*}
 Integrating both sides with respect to time and invoking stochastic Fubini's theorem, see Lemma \ref{L:spotvariance}, leads to 
 \begin{align*} 
 \int_0^t \langle \bar Y_s, p^{\ve} \rangle  ds  = 	\int_0^t \langle S_s \bar Y_0, p^{\ve}\rangle ds  + \int_0^t (S_{t-s} p^{\ve})(0) Z_s ds.
 \end{align*}
Sending $\ve \to 0$  yields 
  \begin{align*} 
 X_t = \lim_{\ve \to 0}\int_0^t \langle \bar Y_s, p^{\ve} \rangle  ds  = 	\int_0^t ( S_s \bar Y_0)(0) ds  + \int_0^t p_{t-s}(0) Z_s ds,
 \end{align*}
showing that $X$ solves \eqref{eq:localtimevolterra} with the function $g_0(t)=(S_t\bar Y_0)(0)$.

\bibliographystyle{apa}
\addcontentsline{toc}{section}{References}
\bibliography{bibl}


\end{document}